\documentclass[11pt]{amsart}
\usepackage{setspace}
\usepackage{amssymb, enumitem}
\usepackage[all]{xy}
\usepackage{hyperref, aliascnt, graphicx}
\usepackage{amscd}

\newtheorem{lma}{Lemma}[section]

\newaliascnt{thmCt}{lma}
\newtheorem{thm}[thmCt]{Theorem}
\aliascntresetthe{thmCt}

\newaliascnt{corCt}{lma}
\newtheorem{cor}[corCt]{Corollary}
\aliascntresetthe{corCt}

\newaliascnt{prpCt}{lma}
\newtheorem{prp}[prpCt]{Proposition}
\aliascntresetthe{prpCt}

\newtheorem*{thm*}{Theorem}
\newtheorem*{cor*}{Corollary}
\newtheorem*{prop*}{Proposition}

\theoremstyle{definition}

\newaliascnt{pgrCt}{lma}
\newtheorem{pgr}[pgrCt]{}
\aliascntresetthe{pgrCt}

\newaliascnt{dfnCt}{lma}
\newtheorem{dfn}[dfnCt]{Definition}
\aliascntresetthe{dfnCt}

\newaliascnt{rmkCt}{lma}
\newtheorem{rmk}[rmkCt]{Remark}
\aliascntresetthe{rmkCt}

\newaliascnt{rmksCt}{lma}

\aliascntresetthe{rmksCt}

\newaliascnt{exaCt}{lma}

\aliascntresetthe{exaCt}

\newaliascnt{egCt}{lma}
\newtheorem{eg}[egCt]{Example}
\aliascntresetthe{egCt}

\newaliascnt{qstCt}{lma}

\aliascntresetthe{qstCt}

\newaliascnt{pbmCt}{lma}

\aliascntresetthe{pbmCt}

\newaliascnt{ntnCt}{lma}

\aliascntresetthe{ntnCt}

\numberwithin{equation}{section}

\keywords{semigroup; amenability; C*-algebra; crossed product; partial action; nuclear C*-algebra.}
\subjclass{46L05; 20Mxx; 47L65.}
\author{Marat Aukhadiev}
\title[C*-theory of the universal inverse semigroups]{An inverse semigroup approach to the C*-algebras and crossed products of cancellative semigroups }

\address{WWU M{\"u}nster, Einsteinstr. 62, 48149 M{\"u}nster, Germany.\newline Kazan State Power Engineering University, Krasnoselskaya 51, 420066 Kazan, Russia.\newline m.aukhadiev@uni-muenster.de}

\begin{document}

\begin{abstract}
	We give a new definition of the semigroup C*-algebra of a left cancellative semigroup, which resolves  problems of the construction by X.~Li. Namely, the new construction is functorial,   and the independence of ideals in the semigroup does not influence  the independence of the generators. It has  a group C*-algebra as a natural quotient. The C*-algebra of the old construction is a quotient of the new one. All this  applies both to the full and reduced C*-algebras. 	 The construction is based on the universal inverse semigroup generated by a left cancellative semigroup. We apply this approach to connect amenability of a semigroup to nuclearity of its C*-algebra. Large classes of actions of these semigroups are in one-to-one correspondence, and the crossed products are isomorphic.  A crossed product of a left Ore semigroup is isomorphic to the partial crossed product of the generated group. 
\end{abstract}

\maketitle

\section{Introduction}

Working with  semigroup C*-algebras of cancellative semigroups we face several significant problems.  The full semigroup C*-algebra $C^*(S)$ of a left cancellative semigroup $S$ was introduced by X. Li in \cite{Li}, and it was immediately noticed by the author in Section 2.5 that  this construction is not functorial, i.e. not every semigroup morphism of two cancellative semigroups extends to a *-homomorphism of their C*-algebras. It fails already for a morphism with domain the free monoid, as we show in \autoref{eg:free}. The reason for such behavior is that the generators of $C^*(S)$ imitate the left regular representation of $S$, while for functoriality one needs to consider a larger class of isometric representations. 

Another problem concerns the reduced semigroup C*-algebra. Many results in \cite{Li}, \cite{Li2} and in other papers on the subject assume independence of the constructible right ideals in $S$.  But this fails even in the simplest example of an abelian semigroup $\mathbb{Z}_+\setminus\{1\}$ with usual addition operation, see \autoref{pgr:independence}.

We solve these problems by constructing the universal inverse semigroup of $S$, and associating to $S$ the full and reduced C*-algebras of this inverse semigroup. These algebras have quotients isomorphic to $C^*(S)$ and $C^*_r(S)$ correspondingly, and a certain quotient is a group C*-algebra for some group associated with $S$.

An inverse semigroup naturally arises in the left regular representation of a left cancellative semigroup $S$, and relations between the C*-algebras of these two semigroups were studied by several  authors. The latest results in this direction were obtained by M.~Norling in \cite{Norling}, where $C^*_r(S)$ is described as a quotient of the C*-algebra of the left inverse hull $I_l(S)$ of $S$. The author also gives a surjective homomorphism between the full C*-algebras $C^*(S)\to C^*(I_l(S))$.  One can see that the full semigroup C*-algebra $C^*(S)$ is by definition a C*-algebra of an inverse semigroup $W$ generated by the elements of $S$ as isometries, where idempotents correspond to the constructible right ideals in $S$. These ideals are domains and images of operators in $I_l(S)$.

The semigroup $I_l(S)$ represents only the canonical action of $S$ on itself, and cannot capture all possible actions of $S$, neither can $W$.
As we show, a more efficient way is first to embed $S$ in a universal inverse semigroup, and then obtain through it the C*-algebra of $S$, and actions and crossed products by $S$. 
For this purpose we use the notion of a free inverse semigroup and the results on the problem of embedding cancellative semigroups in inverse semigroups.

Any injective action of a cancellative semigroup generates an inverse semigroup of partial bijections. This leads to a construction of the universal inverse semigroup $S^*$ generated by $S$, as we explain in Section \ref{s1}. We give a description of $S^*$ and its relation to the left inverse hull. The existence follows from the work \cite{Schein} of B.~Shain.

In Section 3 we answer the question when the universal inverse semigroup $S^*$ is E-unitary. This question is important, because so far the class of E-unitary inverse semigroups is the most well-studied. The answer is that $S^*$ is E-unitary precisely when $S$ is embeddable in a group. In this case we give a concrete model for $S^*$, describing it as a semigroup generated by an action on the group $G$ generated by $S$. This action is the one which gives the isomorphism $C^*(S^*)\cong C^*(E)\rtimes G$ due to a result by Milan and Steinberg in \cite{Milan}.

Section 4 contains a study of the reduced semigroup C*-algebra of $S^*$. We prove that the left regular representation of $S$ is a subrepresentation of the left regular representation $V$ of $S^*$. It follows that $C^*_r(S)$ is a quotient of $C^*_r(S^*)$. When $S$ embeds in a group, the model for $S^*$ described in Section 3 allows to compute $V$. We prove that $V$ is a direct sum of representations, each of them can be realized  as some restriction of the left regular representation of $G$ onto a subspace of $\ell^2(G)$, where $G$ is a group generated by $S$. And the left regular representation of $S$ is one of these summands.

We compare the full semigroup C*-algebras of $S$ and $S^*$ in Section 5.  By virtue of general theory of inverse semigroups, a natural quotient of $C^*(S^*)$ is a group C*-algebra $C^*(G)$, where G is the maximal group homomorphic image of $S^*$, and the same holds for their reduced versions. In the case when $S$ embeds in a group, $G$ is isomorphic to the group generated by $S$. We point out that the assignment $S\mapsto C^*(S^*)$ is functorial, unlike the assignment $S\mapsto C^*(S)$. \autoref{eg:free} of a free monoid $\mathbb{F}_n^+$ illustrates this difference. Unlike $C^*(\mathbb{F}_n^+)$, which almost never admits a homomorphism onto $C^*(S)$ for an $n$-generated left cancellative semigroup $S$, the quotient map $C^*({\mathbb{F}_n^+}^*)\mapsto C^*(S^*)$ always exists. A natural quotient of $C^*({\mathbb{F}_n^+}^*)$ is the Cuntz algebra $\mathcal{O}_n$.

In \autoref{pgr:independence} we note that the idempotent generators of $S^*$ under the left regular representation are linearly independent, due to the general theory of inverse semigroups. Therefore, an important question of independence of constructible right ideals has no importance for $C^*_r(S^*)$. We note that $C^*(S^*)$ and $C^*_r(S^*)$ are nuclear for an abelian semigroup $S$.

We apply our constructions to the connection between amenability of a cancellative semigroup $S$ and nuclearity of its reduced C*-algebra in Section \ref{secamenab}. In particular, we prove that if $S$ embeds in an amenable group, then $C^*_r(S)$ is nuclear. Thus we generalize results of \cite{Li2} on this question. Moreover, in the above mentioned case, $S^*$ has the weak containment property and $C^*_r(S^*)$ is nuclear. We also show that amenability of $S$ implies amenability of $S^*$ in any case. 

It is a known fact that $C^*_r({\mathbb{F}_n^+})$ is nuclear despite the fact that $\mathbb{F}_n^+$ is not amenable. Using our constructions, we obtain that  $C^*_r({\mathbb{F}_n^+}^*)$ is not nuclear, which makes it a more natural candidate for the C*-algebra associated with the free monoid. 

In Section \ref{seccross} we give connections between actions of $S$ and $S^*$ on spaces, C*-algebras, and prove isomorphisms of crossed products. We consider actions of cancellative semigroups by endomorphisms; the case of automorphisms was studied in \cite{Li}. According to the definition by \cite{Sieben}, an  inverse semigroup acts on a C*-algebra by *-isomorphisms between closed two-sided *-ideals of the C*-algebra. Therefore, with a view to connect actions by cancellative semigroups with actions by inverse semigroups, we are forced to restrict ourselves to the case when the images of endomorphisms  are ideals.  

First we prove that injective actions of a cancellative semigroup $S$  are in one-to-one correspondence with unital actions of $S^*$, so that an action of one induces an action of the other. Then we prove an isomorphism between the crossed products $A\rtimes_{\alpha} S$ and $A\rtimes_{\tilde{\alpha}} S^*$, where $\tilde{\alpha}$ is induced by $\alpha$ or the other way round. The case of a unital C*-algebra $A$ is more common for the crossed products by cancellative semigroups, hence we consider the unital and the non-unital case separately. We use the definition of a crossed product with non-unital C*-algebra of N.~Larsen \cite{Larsen}. This result allows us to describe $C^*(S^*)$ as a crossed product of a commutative C*-algebra by $S$, and $C^*(S)$ as a crossed product by $S^*$. If $S$ is a left Ore semigroup, then any unital action of $S^*$ can be dilated to an action of a group, and the crossed product is Morita equivalent to the group crossed product.

In Section \ref{secore} we study connections of actions and crossed products of semigroups with partial actions and partial crossed products of groups. If $S$ is embeddable in a group, the model for $S^*$ gives a partial action of the group $G$ (the group generated by $S$), such that $C^*(S^*)$ is isomorphic to a partial crossed product by $G$. This isomorphism is precisely the one given by Milan and Steinberg in \cite{Milan}. A stronger result holds in the case of a left Ore semigroup, that is, a semigroup $S$ such that $G=S^{-1}S$ is a group. Using the semigroup $S^*$ and the work of Exel and Vieira \cite{Exel2}, we prove that any injective action of $S$ generates a partial action of $G$, and the corresponding crossed products are isomorphic.

\begin{pgr}\label{pgr:invsem} Let us recall the main definitions and facts used in this paper. Let $P$ be a semigroup. Elements $x$ and $x^*$ in $P$ are called \emph{inverse} to each other if $$xx^*x=x,\mbox{ and } x^*xx^*=x^*.$$ The semigroup $P$ is called an \emph{inverse semigroup} if for any $x\in P$ there exists a \textbf{unique}  inverse element $x^*\in P$. Further, $P$ always stands for an inverse semigroup. 
We proceed to recall basic facts on inverse semigroups.

\begin{thm}\label{vagner}(V.~V.~Vagner \cite{Vagner}). For a semigroup $S$ in which every element has an inverse, uniqueness of inverses is equivalent to the requirement that all idempotents in $S$ commute.
\end{thm}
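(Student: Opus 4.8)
The plan is to prove the two implications separately, using throughout only the defining identities $xx^*x=x$ and $x^*xx^*=x^*$, together with the elementary fact that whenever $y$ is an inverse of $x$ the elements $xy$ and $yx$ are idempotent (e.g.\ $(xy)(xy)=(xyx)y=xy$). I will also repeatedly use that an idempotent $e$ is its own inverse, so that under the uniqueness hypothesis $e^*=e$.

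First I would dispatch the direction ``commuting idempotents $\Rightarrow$ unique inverses,'' which is routine. Suppose all idempotents commute and let $y,z$ both be inverse to $x$. Then $xy,xz$ and $yx,zx$ are idempotents, so $xy\cdot xz=xz\cdot xy$; expanding the left side as $(xyx)z=xz$ and the right side as $(xzx)y=xy$ gives $xy=xz$, and the symmetric computation with $yx\cdot zx=zx\cdot yx$ gives $yx=zx$. Substituting these into $y=yxy$ yields $y=(yx)y=(zx)y=z(xy)=z(xz)=zxz=z$, so the inverse is unique.

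The substantive direction, and the main obstacle, is ``unique inverses $\Rightarrow$ idempotents commute.'' Here the crucial lemma is that the inverse of a product of two idempotents is again idempotent. Let $e,f$ be idempotents and put $b=(ef)^*$. I would show that $fbe$ is also an inverse of $ef$: after cancelling the repeated factors $ee$ and $ff$, one computes $(ef)(fbe)(ef)=(ef)b(ef)=ef$ and $(fbe)(ef)(fbe)=f\bigl(b(ef)b\bigr)e=fbe$. Uniqueness then forces $b=fbe$, whence $be=b$ and $fb=b$, and consequently $b=b(ef)b=(be)(fb)=b^2$. Thus $b$ is idempotent, so it is self-inverse; but its unique inverse is $ef$, and therefore $ef=b$ is idempotent.

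With the lemma in hand the conclusion is quick. Applying it to both $ef$ and $fe$ shows that each is idempotent, and a direct check gives $(ef)(fe)(ef)=ef$ and $(fe)(ef)(fe)=fe$, so $fe$ is an inverse of $ef$. Since the idempotent $ef$ is its own unique inverse, $fe=(ef)^*=ef$, and idempotents commute. I expect the only delicate point to be the verification that $fbe$ is a second inverse of $ef$, since that computation is what unlocks idempotency of $(ef)^*$; everything afterwards is formal.
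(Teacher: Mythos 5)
Your proof is correct and complete: both implications check out, and the key lemma (that the unique inverse $b=(ef)^*$ of a product of idempotents satisfies $b=fbe$, hence is itself idempotent, forcing $ef=b$ to be idempotent and then $fe=(ef)^*=ef$) is exactly the classical Vagner--Liber argument. The paper itself offers no proof of this statement --- it is quoted as a known theorem with a citation to Vagner --- so there is nothing internal to compare against; your write-up would serve as a self-contained substitute for the reference.
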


The set of idempotents of an inverse semigroup $P$ forms a commutative semigroup denoted $E(P)$. In fact, $$E(P)=\{ xx^*| x\in P\}=\{x^*x| x\in P \}.$$ 

Every inverse semigroup $P$ admits a universal morphism onto a group $G(P)$, which is the quotient by the congruence: $s\sim t$ if $se=te$ for some $e\in E(P)$. The group $G(P)$ is called \emph{the maximal group homomorphic image} of $P$. Note that $G(P)$ is always trivial if $P$ contains a zero, i.e. an element denoted $0$, satisfying $0\cdot a=a\cdot 0=0$ for any $a\in P$. Let $\sigma\colon P\to G(P)$ denote the quotient homomorphism onto the maximal group homomorphic image of $P$. The semigroup $P$ is called \emph{E-unitary} if $\sigma^{-1}(1)=E(P)$.
\end{pgr}

\begin{pgr} A semigroup $S$ is called \emph{left (right) cancellative} if for any $a,b,c\in S$  the equation $ab=ac$ ($ba=ca$) implies $b=c$. A unit in a semigroup is an element denoted by $1$, satisfying $a\cdot 1=1\cdot a=a$ for any $a$ in the semigroup.   
\end{pgr}

\begin{pgr} Let us compare inverse and left cancellative semigroups. These two classes of  semigroups have radical differences, which follow directly from the definitions. The notion of an inverse semigroup is a natural generalization of the notion of a group, where a group inverse element ($ss^{-1}=1$ and $s^{-1}s=1$) is substituted by a ``generalized inverse'' ($ss^*s=s$ and $s^*ss^*=s^*$).  This is the reason why at the early stage the inverse semigroups were called ``generalized groups'' (\cite{Vagner}). Inverse semigroups have many idempotents and may have a zero, while a left cancellative semigroup may have only one idempotent, namely the unit element, and no zero element. A left cancellative semigroup is very often embedded in a group, while an inverse semigroup is a subsemigroup in a group only if it is a group itself. In particular, the intersection of these classes is the class of groups. 

One meets the consequence of these differences in the theory of semigroup C*-algebras, starting with the left (right) regular representation. An inverse semigroup is represented on itself by partial bijections. A left cancellative semigroup is represented on itself by injective maps, where the domain is the whole semigroup.  An inverse semigroup has an involution, which is a map assigning to every element of $S$ its inverse element. And the presence of an involution makes it very natural to consider *-representations in $B(H)$. Despite the different nature, soon after the establishment of inverse semigroups, it was noticed that these two classes are closely related. In the following section we construct a universal inverse semigroup generated by a left cancellative semigroup.
\end{pgr}

\section{Universal inverse semigroup}\label{s1}

\begin{pgr} Recall the basic example of an inverse semigroup. Let $X$ be a set, and let $Y\subset X$. A one-to-one map $\alpha\colon Y\to  X$ is called a \emph{partial bijection} of $X$. In particular, any injective map $X\to X$ is a partial bijection of $X$. Suppose that $\alpha$ and $\beta$ are partial bijections of $X$ with domains $Y$ and $ Z$ respectively. Then the product $\alpha\beta$ is defined to be a composition of $\alpha$ and $\beta$ with domain $\beta^{-1}(\beta(Z)\cap Y)$. The set $\mathcal{I}(X)$ of partial bijections with this product forms an inverse semigroup called \emph{the symmetric inverse semigroup of} $X$. Note that this semigroup contains a zero and a unit.
\end{pgr}

In what follows we always assume that every semigroup contains a unit element, denoted by 1.

\begin{pgr} The first inverse semigroup associated with a left cancellative semigroup $S$ arises from the left regular representation of $S$. 
For any $a\in S$, define an operator of left ranslation $\lambda_a\colon S\to S$ by $\lambda_a(b)=ab$ for all $b\in S$. Since $S$ is left cancellative, each $\lambda_a$ is injective.  Then  $\{\lambda_a| a\in S\}$ forms a semigroup of partial bijections on $S$, and it is a subsemigroup of $\mathcal{I}(S)$. The inverse of $\lambda_a$ is a partial bijection with a domain $\{ab|b\in S\}$. Taking inverses of all such partial bijections and products of them, one obtains a subsemigroup of $\mathcal{I}(S)$. This is an inverse semigroup called \emph{the left inverse hull of} $S$ (\cite{Che}), denoted $I_l(S)$.
\end{pgr}

\begin{pgr} More generally, suppose we have an injective action $\alpha$ of a left cancellative semigroup $S$ on a space $X$. It means that for every $s\in S$, the map $\alpha_s\colon X\to X$ is injective and $\alpha_s\circ \alpha_t=\alpha_{st}$ for all $t\in S$.  Denote the image of $\alpha_s$  by $D_s\subset X$. Then $\alpha_s$  is a bijection between $X$ and its image $D_s$, and there exists an inverse map, which we denote by $\alpha_s^*\colon D_s\to X$. For convenience set $D_{s^*}=X$ for every $s\in S$. One can easily verify that $D_{st}=\alpha_s(D_t)$.  

Clearly, $\alpha_s^*\circ \alpha_s$ is the identity on $X$ and $\alpha_s\circ \alpha_s^*$ is the identity on $D_s$. It follows that $\alpha_s\circ\alpha_s^*\circ \alpha_s=\alpha_s$ and $\alpha_s^*\circ\alpha_s\circ \alpha_s^*=\alpha_s^*$.  But the composition $\alpha_s^*\circ\alpha_t$ is defined only on a subset of $X$, namely on $\alpha_t^*(D_s\cap D_t)$. Thus we put $D_{(s^*t)^*}=\alpha_t^*(D_s\cap D_t)$ and define the product $\alpha_s^*\alpha_t=(\alpha_s^*\circ\alpha_t)|_{ D_{t^*s}}$. One should check that this definition is compatible with multiplication in $S$, namely $\alpha_{st}^*\alpha_v=\alpha_t^*\alpha_s^*\alpha_v$. Continuing this way we define all finite products $w$ of the maps from the collection $F=\{\alpha_s, \alpha_t^* \mbox{ for all } t,s\in S\}$ with domain $D_w$.  We put $\alpha_s^{**}=\alpha_s$ for all $s\in S$. 

It is easy to see that for $a_1,...,\ a_n\in F$, the element $(a_1a_2...a_n)^*=a_n^*...a_2^*a_1^*$ is the inverse (in a semigroup sense) of $w=a_1a_2...a_n$, and that $ww^*$ and $w^*w$ are idempotents. Obviously, $w^*w$ and $v^*v$ commute for any words $v,w$ and any idempotent has the form $w^*w$. Thus, we get an inverse semigroup, which is a subsemigroup in a set of all partial bijections on a space $X$. Note that in the case that $\alpha$ is an action of $S$ by injective maps, we have $\alpha_s^*\alpha_s=\mathrm{id}$, so $\alpha_s$ is an isometry. We have verified the following statement.
\end{pgr}

\begin{lma}\label{act} An injective action $\alpha$ of a left cancellative semigroup $S$ on a space $X$ generates an inverse semigroup $S^*_\alpha\subset \mathcal{I}(X)$.
\end{lma}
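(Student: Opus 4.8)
The plan is to realize $S^*_\alpha$ concretely as the subsemigroup of $\mathcal{I}(X)$ generated by the partial bijections $\alpha_s$ together with their partial inverses $\alpha_s^*$, and then to deduce that it is an inverse semigroup from the fact that $\mathcal{I}(X)$ itself is one. The guiding principle I would exploit is that a subsemigroup of an inverse semigroup which is closed under the (semigroup-theoretic) involution is automatically an inverse subsemigroup, since its idempotents are then idempotents of the ambient semigroup and therefore commute. Accordingly, the work reduces to two closure checks: closure under products and closure under inverses.

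First I would record that each $\alpha_s$ is a partial bijection of $X$ with domain $X$ and image $D_s$, and that its set-theoretic inverse $\alpha_s^*$ is a partial bijection with domain $D_s$ and image $X$; thus the generating collection $F=\{\alpha_s,\alpha_t^*\mid s,t\in S\}$ lies in $\mathcal{I}(X)$. Taking $S^*_\alpha$ to be the set of all finite products of elements of $F$, closure under the product of $\mathcal{I}(X)$ is immediate, because a product of two finite words is again a finite word; the only care required is the bookkeeping of domains, governed by the rule that the product of two partial bijections with domains $Y$ and $Z$ has domain $\beta^{-1}(\beta(Z)\cap Y)$ recalled above. Hence $S^*_\alpha$ is a subsemigroup of $\mathcal{I}(X)$.

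Next I would verify closure under the involution. Since we set $\alpha_s^{**}=\alpha_s$, the generating set $F$ is itself closed under $*$, so for a word $w=a_1a_2\cdots a_n$ with each $a_i\in F$ one checks that $a_n^*\cdots a_2^*a_1^*$ is the set-theoretic inverse of $w$ and again lies in $S^*_\alpha$. Concretely this uses only $\alpha_s^*\circ\alpha_s=\mathrm{id}_X$ and $\alpha_s\circ\alpha_s^*=\mathrm{id}_{D_s}$, from which $ww^*w=w$ and $w^*ww^*=w^*$ follow word by word. With inverses now available inside $S^*_\alpha$, it remains to see that each inverse is unique, and here I would invoke \autoref{vagner}: it suffices to check that all idempotents commute. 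But every idempotent of $S^*_\alpha$ has the form $w^*w$, which is the identity map on the subset $D_w\subseteq X$, and two such partial identities commute, their composite being the identity on the intersection of the two subsets. Therefore every element of $S^*_\alpha$ has a unique inverse, so $S^*_\alpha$ is an inverse semigroup; in the special case that $\alpha$ acts by globally defined injective maps one moreover has $\alpha_s^*\alpha_s=\mathrm{id}$, exhibiting each $\alpha_s$ as an isometry.

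I expect no serious obstacle, as the statement is essentially a structural observation. The only genuinely delicate point is the domain bookkeeping when composing partial bijections, together with confirming that the contravariant identity $(\alpha_s\alpha_t)^*=\alpha_t^*\alpha_s^*$ is consistent with the action relation $\alpha_s\alpha_t=\alpha_{st}$, i.e. that $\alpha_{st}^*=\alpha_t^*\alpha_s^*$. Once this compatibility is in place, the commutativity of idempotents in $\mathcal{I}(X)$ does all of the remaining work through \autoref{vagner}.
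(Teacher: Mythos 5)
Your proposal is correct and follows essentially the same route as the paper: the paper likewise realizes $S^*_\alpha$ as the set of finite words in the $\alpha_s$ and $\alpha_t^*$ inside $\mathcal{I}(X)$, checks that $w^*=a_n^*\cdots a_1^*$ inverts $w=a_1\cdots a_n$, and concludes via commutativity of the idempotents $w^*w$ (which, as partial identities, obviously commute) that inverses are unique. Your explicit appeal to \autoref{vagner} and the observation that a $*$-closed subsemigroup of $\mathcal{I}(X)$ inherits commuting idempotents merely makes transparent what the paper leaves implicit.
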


This motivates a notion of a universal inverse semigroup generated by a left (right) cancellative semigroup. A problem of embedding  a semigroup in an inverse semigroup is analogous to the well-known and widely studied problem of embedding  it in a group. Recall a famous result of O.~Ore and P.~Dubreil on this problem, which we will use later.

\begin{thm}\label{ore} (\cite{Dubreil}) A semigroup $S$ can be embedded into a group $G$ such that $G=S^{-1}S$ if and only if it is left and right cancellative and for any $p,q\in S$ we have $Sp\cap Sq\neq\emptyset$. The group $G$ is called the group of left quotients of $S$.\end{thm}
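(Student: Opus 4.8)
The plan is to prove the two implications separately, dispatching the forward (necessity) direction by a short direct computation and devoting the bulk of the work to the converse, which is the classical construction of the Ore localization.

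For necessity, suppose $S$ embeds in a group $G$ with $G=S^{-1}S$. Since $S$ sits inside a group, left and right cancellativity are inherited at once: $ab=ac$ in $G$ forces $b=c$ after multiplying by $a^{-1}$ on the left, and symmetrically on the right. For the intersection condition I would fix $p,q\in S$ and consider $qp^{-1}\in G$; writing $qp^{-1}=a^{-1}b$ with $a,b\in S$ (possible because $G=S^{-1}S$) yields $aq=bp$, so the common element $bp=aq$ lies in $Sp\cap Sq$, which is therefore nonempty.

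For sufficiency I would build $G$ as a semigroup of formal left fractions. Represent elements by pairs $(a,b)\in S\times S$, thought of as $a^{-1}b$, and declare $(a,b)\sim(c,d)$ iff there exist $p,q\in S$ with $pa=qc$ and $pb=qd$. Reflexivity and symmetry are immediate; the delicate point is transitivity, where from $(a,b)\sim(c,d)$ via $(p,q)$ and $(c,d)\sim(e,f)$ via $(r,s)$ I would invoke $Sq\cap Sr\neq\emptyset$ to choose $u,v$ with $uq=vr$ and then verify directly that $(up,vs)$ witnesses $(a,b)\sim(e,f)$ (this step uses the intersection hypothesis but no cancellation). Writing $[a,b]$ for the class, I would define the product $[a,b]\cdot[c,d]$ by first using $Sb\cap Sc\neq\emptyset$ to pick $e,f$ with $eb=fc$, so that formally $bc^{-1}=e^{-1}f$, and setting $[a,b]\cdot[c,d]=[ea,fd]$. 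The unit is $[s,s]$, the inverse of $[a,b]$ is $[b,a]$, and the embedding is $\iota(s)=[1,s]$, where the paper's standing assumption that $S$ has a unit is convenient. One then checks that $\iota$ is a homomorphism, that $[1,s]=[1,t]$ forces $ps=pt$ for some $p$ and hence $s=t$ by left cancellativity (so $\iota$ is injective), and that $[a,b]=\iota(a)^{-1}\iota(b)$, which gives $G=S^{-1}S$.

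The main obstacle I expect is showing that the multiplication is well defined and associative. Independence of the auxiliary choice $e,f$ and of the chosen representatives is not automatic: reconciling two competing choices requires a further application of the intersection hypothesis to produce a common left multiple, followed by a \emph{right} cancellation to remove it. For instance, if $eb=fc$ and $e'b=f'c$ both occur, I would take $p,q$ with $pe=qe'$ and then derive $pfc=qf'c$ from the defining relations, so that right cancellativity yields $pf=qf'$ and hence $[ea,fd]=[e'a,f'd]$; this is precisely where the second cancellativity hypothesis enters. Associativity is a longer instance of the same bookkeeping. Once coherence of the fraction arithmetic is established, the group axioms and the identity $[a,b]=\iota(a)^{-1}\iota(b)$ are routine, so the technical heart of the theorem is exactly the compatibility of these formal operations under the intersection condition together with two-sided cancellativity.
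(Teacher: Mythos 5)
The paper offers no proof of this statement: it is quoted as a classical theorem of Ore and Dubreil with a citation to \cite{Dubreil}, so there is nothing internal to compare your argument against. Judged on its own, your proposal is the standard left Ore localization and it is essentially correct. The necessity direction is complete as written ($qp^{-1}=a^{-1}b$ gives $aq=bp\in Sp\cap Sq$). In the sufficiency direction your choices are the right ones: the equivalence $(a,b)\sim(c,d)\Leftrightarrow\exists\,p,q\ (pa=qc,\ pb=qd)$ does capture equality of the fractions $a^{-1}b$ and $c^{-1}d$, your transitivity argument via $uq=vr$ is correct and indeed uses only the intersection condition, and you correctly locate the one place where right cancellativity is indispensable (cancelling the common right factor $c$ from $pfc=qf'c$ after forming a common left multiple of $e$ and $e'$). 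The two items you defer --- independence of the product of the chosen representatives, and associativity --- are genuinely the bulk of the work rather than afterthoughts; for the record, the representative check does go through by the same pattern (from $pa=qa'$, $pb=qb'$, $eb=fc$, $e'b'=f'c$ one takes $me=np$ and then $unq=ve'$, derives $umf=vf'$ by right cancellation, and checks that $(um,v)$ witnesses $[ea,fd]\sim[e'a',f'd]$), so nothing fails there. Your use of the unit (reflexivity via $p=q=1$, the embedding $\iota(s)=[1,s]$, and $p\in Sp$ so that the Ore condition produces genuine common left multiples) is consistent with the paper's standing convention that all semigroups are monoids; without that convention one would phrase these steps with arbitrary elements of $S$ in place of $1$. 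In short: correct strategy, correct key computations, with the routine verifications honestly flagged rather than hidden.
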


\begin{pgr} The question of embedding a semigroup in an inverse semigroup is more general and is approached in the following way. For any set $X$ there exists a free inverse semigroup $F(X)$, generated by $X$ (see \cite{Preston} for the proof). Thus, if $S$ is a semigroup, we can consider the quotient of $F(S)$ by all the relations in  $S$. Namely, if $xy=z$ in $S$ we put $xy \sim z$ in $F(S)$, and the same for inverses. The resulting semigroup is called the free inverse semigroup of $S$, and we denote it $S^F$. So, the question becomes whether the natural map $S\to S^F$ is an embedding. In fact, there were found many sufficient conditions for this to hold. Among these results we mention the most important for our research.
\end{pgr}

\begin{thm}\label{Schein} (B. Shain \cite{Schein}). 
If a semigroup $S$ is left (or right) cancellative, then $S$ can be embedded into an inverse semigroup.\end{thm}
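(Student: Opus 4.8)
The plan is to realize $S$ inside the symmetric inverse semigroup $\mathcal{I}(S)$ via the left regular representation, i.e. exactly the construction recalled just before \autoref{act}. Assume first that $S$ is left cancellative. For each $a\in S$ consider the left translation $\lambda_a\colon S\to S$, $\lambda_a(b)=ab$. Left cancellativity says precisely that each $\lambda_a$ is injective, so $\lambda_a$ is a partial bijection of $S$ with domain all of $S$, that is, an element of $\mathcal{I}(S)$. First I would verify that $a\mapsto\lambda_a$ is a semigroup homomorphism $S\to\mathcal{I}(S)$: since each $\lambda_b$ is everywhere defined, the product (composition) $\lambda_a\lambda_b$ in $\mathcal{I}(S)$ is again everywhere defined, and associativity in $S$ gives $\lambda_a\lambda_b(c)=a(bc)=(ab)c=\lambda_{ab}(c)$, so $\lambda_a\lambda_b=\lambda_{ab}$.

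The second step is injectivity of this homomorphism, and this is the only point that really uses more than cancellativity: here the standing assumption that $S$ has a unit $1$ is essential. Indeed $\lambda_a(1)=a\cdot 1=a$, so $\lambda_a=\lambda_{a'}$ forces $a=\lambda_a(1)=\lambda_{a'}(1)=a'$. Thus $a\mapsto\lambda_a$ is an injective homomorphism, and $S$ is isomorphic to the subsemigroup $\{\lambda_a\mid a\in S\}$ of $\mathcal{I}(S)$. Since $\mathcal{I}(S)$ is an inverse semigroup, $S$ embeds into an inverse semigroup, concretely into the left inverse hull $I_l(S)$, the inverse subsemigroup of $\mathcal{I}(S)$ generated by the $\lambda_a$.

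For the right cancellative case I would pass to the opposite semigroup $S^{\mathrm{op}}$, which is left cancellative and still has a unit; by the above it embeds in an inverse semigroup $P$, and then $S=(S^{\mathrm{op}})^{\mathrm{op}}$ embeds in $P^{\mathrm{op}}$, which is again an inverse semigroup (reversing the multiplication preserves the set of idempotents and their commutativity, hence by \autoref{vagner} preserves uniqueness of inverses). Alternatively one repeats the argument with right translations $\rho_a(b)=ba$.

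I do not expect a serious obstacle, but the subtle point worth emphasizing is that faithfulness of the left regular representation is where the hypotheses actually bite: cancellativity alone does not suffice. For instance a right-zero semigroup, with multiplication $xy=y$, is left cancellative, yet all of its elements are idempotents that fail to commute, so by \autoref{vagner} it cannot embed in any inverse semigroup; note it has no unit. It is precisely the presence of the unit, in force throughout the paper, that makes $a\mapsto\lambda_a$ injective and secures the statement.
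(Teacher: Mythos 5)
Your argument is correct under the paper's standing convention (stated just before the left inverse hull is introduced) that every semigroup has a unit, and it is essentially the construction the paper itself recalls around \autoref{act} and in \autoref{pgr:hull}: the left regular representation $a\mapsto\lambda_a$ lands in the symmetric inverse semigroup $\mathcal{I}(S)$, injectivity of each $\lambda_a$ is exactly left cancellativity, and faithfulness of $a\mapsto\lambda_a$ comes from evaluating at the unit. The paper does not actually prove \autoref{Schein} --- it only cites Shain --- so there is no step-by-step comparison to make; but be aware that Shain's theorem in the cited source concerns semigroups that need not have a unit, where the left regular representation can fail to be faithful and a genuinely different argument is required. Your right-zero example correctly shows that the bare statement is false without some extra hypothesis; the condition that actually makes the regular representation faithful is left reductivity ($\lambda_a=\lambda_b\Rightarrow a=b$), which the paper observes is automatic for monoids. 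Within the paper's conventions your proof is complete, and the reduction of the right cancellative case to the left one via the opposite semigroup is sound, since reversing the multiplication preserves the idempotents and their commutativity, hence by \autoref{vagner} preserves the property of being an inverse semigroup.
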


\begin{pgr} Therefore, working with a left cancellative semigroup $S$ we always have an embedding $S \hookrightarrow S^F$. B. Shain also gave an explicit form of $S^F$. Namely, it is the semigroup generated by the set $$\{v_p,v_p^*\colon  p\in S,\ v_pv_p^*v_p=v_p,\ v_p^*v_pv_p^*=v_p^* \}$$ with the additional requirement that all idempotents in $S^F$ commute. Since we want $S$ to be represented by isometries, we take the congruence on  $S^F$  generated by the equivalence relation $v_p^*v_p\sim 1$ for all $p\in S$. The quotient inverse semigroup, denoted by $S^*$, is then generated by isometries $v_p$ for $p\in S$. The semigroup $S^*$ is in some sense the largest inverse semigroup generated by $S$ as a semigroup of isometries. 
\end{pgr}

\begin{pgr}\label{pgr:hull} For the moment we have mentioned two inverse semigroups associated with a given left cancellative semigroup $S$, constructed in different ways: the universal inverse semigroup $S^*$, and the left inverse hull $I_l(S)$. In order to see the relation between them, we make a short review of \cite{Che} and the notion of the left inverse hull. Note that we adapt all the relations and notations, because, unlike the left cancellative case selected here, the semigroup $S$  in \cite{Che} is right cancellative. Also, a semigroup with a unit is always left and right reductive, so the semigroup $S$ which we consider fits into the requirements of \cite{Che}. For the generators of $F(S)$, we use symbols $v_p$ for $p\in S$ as above.

 The left inverse hull $I_l(S)$ is proved  in \cite{Che} to be isomorphic to the quotient of $F(S)$ by four collections of relations. In the notation of \cite{Che}, the above mentioned relations $v_p^*v_p\sim 1$ for all $p\in S$ are denoted $R_1$, and $R_4$ is a collection of all relations in $S$. Therefore, passing to the quotient of $F(S)$ by the congruence generated by $R_1\cup R_4$ we obtain exactly $S^*$ (introduced above). The relations $R_2$  and $R_3$ ensure that the elements are equivalent if the corresponding operators in $I_l(S)$ have the same domain and act in the same way (see below). 

An explicit form for the domains of the maps in $I_l(S)$ is given using the notation in \cite{Li}. For any subset $A\subset S$ and any $a\in S$ set
\begin{equation}\label{aa} aA=\{ax|\ x\in A\} \end{equation}
\begin{equation}\label{aca} a^{-1}A=\{x\in S| xa\in A\}\end{equation}
In \cite{Che} these sets are denoted $aA$ and $a: A$ respectively. Then the domain of $\phi=\lambda_{a_1}^{-1}\lambda_{a_2}...\lambda_{a_{n-1}}^{-1}\lambda_{a_n}\in I_l(S)$ is the set $a_{n}^{-1}(a_{n-1}... a_{2}^{-1}(a_1 S))$, which is the image of $\lambda_{a_n}^{-1}\lambda_{a_{n-1}}...\lambda_{a_{2}}^{-1}\lambda_{a_1}=\phi^*\in I_l(S)$. Such domains are right ideals in $S$. In fact, these sets were called \emph{the constructible right ideals} of $S$ by X. Li in \cite{Li} and used there for the definition of $C^*(S)$ and for the study of $C^*_r(S)$. The set of such ideals with an empty set is denoted by $\mathcal{J}$, so $\mathcal{J}$ is the set of all domains (and images) of all maps in $I_l(S)$.

We can now formulate the relations $R_2$ and $R_3$ on $F(S)$. The first of them introduces the zero element.
 $$v_{a_1}^*v_{a_2}...v_{a_{n-1}}^*v_{a_n}\sim_{R_2} uv_{a_1}^*v_{a_2}...v_{a_{n-1}}^*v_{a_n}w \mbox{ for any } u,w\in F(S) $$
$$\mbox{ iff }  a_{n}^{-1}(a_{n-1}... a_{2}^{-1}(a_1 S))=\emptyset; $$

The relations $R_3$ ask for elements to be equivalent if when represented on $S$ they act in the same way.
$$v_{a_1}^*v_{a_2}...v_{a_{n-1}}^*v_{a_n}\sim_{R_3} v_{b_1}^*v_{b_2}...v_{b_{k-1}}^*v_{b_k} $$
$$\mbox{ iff }  a_{n}^{-1}(a_{n-1}... a_{2}^{-1}(a_1 S))= b_{k}^{-1}(b_{k-1}... b_{2}^{-1}(b_1 S))$$
$$ \mbox{ and for any } x\in a_{n}^{-1}(a_{n-1}... a_{2}^{-1}(a_1 S)) $$
$$\mbox{ there exist } x_1,...x_n,y_1,...y_k\in S, \mbox{ such that }x_1=y_1 \mbox{ and }$$
$$  a_nx=a_{n-1}x_n,\ a_{n-2}x_n=a_{n-3}x_{n-1},\ ...,\ a_2x_2=a_1x_1,   $$
$$ b_kx=b_{k-1}y_k,\ b_{k-2}y_k=b_{k-3}y_{k-1},\ ...,\ b_2y_2=b_1y_1. $$
\end{pgr}

The main result of \cite{Che} is that  $I_l(S)$ is isomorphic to the quotient $F(S)/R^*$, where $R^*$ is the congruence generated by $R=R_1\cup R_2\cup R_3\cup R_4$. Comparing this fact with our definition of  $S^*$, we obtain the following.

\begin{lma}\label{lma:allfree} There are surjective homomorphisms of inverse semigroups
\begin{equation}\label{fil} F(S)\stackrel{\alpha}{\rightarrow}S^F\stackrel{\beta}{\rightarrow} S^*\stackrel{\gamma}{\rightarrow} I_l(S), \end{equation}
where $\alpha$ is the quotient by $R_4$, $\beta$ by $R_1$, and $\gamma$ by $R_2\cup R_3$. \end{lma}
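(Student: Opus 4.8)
The plan is to recognize all three arrows as canonical projections onto quotients by congruences on $F(S)$, and then to deduce the factorization from the elementary fact that quotients of inverse semigroups compose. Concretely, writing $\langle \cdot\rangle$ for the congruence generated by a relation, the tool is the correspondence theorem: for relations $R,R'$ on an inverse semigroup $A$ one has
$$A/\langle R\cup R'\rangle \;\cong\; \bigl(A/\langle R\rangle\bigr)\big/\langle \overline{R'}\rangle,$$
where $\overline{R'}$ is the image of $R'$ under the projection $A\to A/\langle R\rangle$. This holds because inverse semigroups form a variety, so their congruences form a lattice in which the join of $\langle R\rangle$ and $\langle R'\rangle$ is exactly $\langle R\cup R'\rangle$, and the congruences of $A/\langle R\rangle$ correspond bijectively to the congruences of $A$ containing $\langle R\rangle$.

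First I would fix the map $\alpha\colon F(S)\to S^F=F(S)/\langle R_4\rangle$, which is the projection supplied directly by the definition of $S^F$ in \autoref{pgr:hull}. Since $S^*=F(S)/\langle R_1\cup R_4\rangle$ by construction, the correspondence theorem gives $S^*\cong S^F/\langle\overline{R_1}\rangle$; hence the projection $\beta\colon S^F\to S^*$ is well defined and $\beta\circ\alpha$ is the quotient by $\langle R_1\cup R_4\rangle$. At this step one only needs that the generators $v_p$, and therefore the defining expressions appearing in $R_1$, survive in $S^F$, which is immediate.

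I would treat $\gamma$ identically. The relations $R_2$ and $R_3$ of \autoref{pgr:hull} are written in the generators $v_p$, so their images $\overline{R_2\cup R_3}$ under $\beta\circ\alpha$ are meaningful on $S^*$. Applying the correspondence theorem once more,
$$S^*/\langle\overline{R_2\cup R_3}\rangle \;\cong\; F(S)/\langle R_1\cup R_2\cup R_3\cup R_4\rangle \;=\; F(S)/R^*.$$
By the main result of \cite{Che}, the right-hand side is isomorphic to $I_l(S)$, and composing the projection with this isomorphism yields $\gamma\colon S^*\to I_l(S)$. Surjectivity of each of $\alpha,\beta,\gamma$ is automatic, since every canonical projection onto a quotient by a congruence is onto.

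The only point needing care -- the closest thing to an obstacle -- is the bookkeeping that the \emph{union} of the four relation sets generates the \emph{same} congruence no matter in which order the relations are imposed, so that the iterated quotient $F(S)\to S^F\to S^*\to I_l(S)$ genuinely computes $F(S)/R^*$. This is precisely the associativity and commutativity of the join in the congruence lattice together with the identity $\langle\langle R\rangle\cup R'\rangle=\langle R\cup R'\rangle$. Once this is invoked, no manipulation of the explicit forms of $R_2$ and $R_3$ is required, and the lemma follows formally.
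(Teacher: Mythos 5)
Your proposal is correct and matches the paper's argument, which likewise treats $S^F$, $S^*$, and $I_l(S)$ as successive quotients of $F(S)$ by the congruences generated by $R_4$, $R_1\cup R_4$, and $R_1\cup R_2\cup R_3\cup R_4$ respectively, citing the main result of \cite{Che} for the last identification. The paper merely states the conclusion after the discussion in \autoref{pgr:hull}; your explicit appeal to the correspondence theorem for the congruence lattice only makes the implicit bookkeeping precise.
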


\section{E-unitarity and a model for the universal inverse semigroup}

As explained in \autoref{pgr:invsem}, for any inverse semigroup $P$ there exists a maximal group homomorphic image $G(P)$, which is not trivial if the semigroup does not contain a zero. This is the case for $S^*$ due to its definition. 

\begin{lma}\label{lma:eunitary}
For a left cancellative semigroup $S$, its universal inverse semigroup $S^*$ (as well as $S^F$) is E-unitary  if $S$ is embeddable in a group. In this case $G(S^*)=G(S^F)$ is isomorphic to the group generated by $S$.
\end{lma}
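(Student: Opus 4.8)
The plan is to treat the two assertions in turn: first identify the maximal group homomorphic images, and then establish E-unitarity by producing an idempotent-pure homomorphism onto that group.

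\emph{Computing the group image.} The assignment sending an inverse semigroup to its maximal group homomorphic image is left adjoint to the inclusion of groups into inverse semigroups, hence it preserves quotients. I would apply this to the chain in \autoref{lma:allfree}. Since $G(F(S))$ is the free group on the set $S$ (a classical fact about free inverse semigroups, \cite{Preston}), and $S^F=F(S)/R_4$ where $R_4$ consists of the multiplication relations $v_pv_q\sim v_{pq}$ of $S$, the image $G(S^F)$ is the group presented by the generators $S$ subject to the relations of $S$, that is, the universal enveloping group $G_u(S)$ of $S$. The relations $R_1$ read $v_p^*v_p\sim 1$; under $\sigma$ the element $v_p$ maps to a group element and $v_p^*$ to its inverse, so $\sigma(v_p^*v_p)=1$ holds automatically, and passing from $S^F$ to $S^*=S^F/R_1$ leaves the maximal group image unchanged. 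Thus $G(S^*)=G(S^F)=G_u(S)$. Finally, since $S$ is embeddable in a group, the canonical map $S\to G_u(S)$ is injective (any embedding $S\hookrightarrow H$ into a group factors through it), so $G_u(S)$ is exactly the group generated by $S$, which proves the last sentence of the statement.

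\emph{Reduction to idempotent purity.} Write $G:=G_u(S)$ and let $\sigma\colon S^*\to G$ be the quotient map of \autoref{pgr:invsem}. By definition $S^*$ is E-unitary precisely when $\sigma^{-1}(1)=E(S^*)$; the inclusion $E(S^*)\subseteq\sigma^{-1}(1)$ is automatic, so the task is the reverse inclusion, i.e. to show that $\sigma$ is idempotent-pure. Because every homomorphism from $S^*$ to a group factors through $\sigma$, it suffices, and is in fact equivalent, to exhibit a single idempotent-pure homomorphism of $S^*$ into a group.

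\emph{The model, and the obstacle.} To produce such a homomorphism I would realise $S^*$ through a partial action of $G$. Using the embedding $S\hookrightarrow G$, each generator should act by the partial translation ``multiply by $\bar p$'', defined on the whole of the relevant space and co-restricted to the clopen set attached to the idempotent $v_pv_p^*$; since $v_p^*v_p=1$ these are genuine partial bijections in the sense of \autoref{act}. Concretely I would take the spectrum $X=\widehat{E(S^*)}$ of the semilattice of idempotents, let $G$ act partially on $X$ by translations, and construct a map $S^*\to E(S^*)\rtimes G$ sending $v_p$ to the pair (range idempotent, $\bar p$). In such a semidirect product the projection onto $G$ records the translation part and its idempotents are exactly the pairs with trivial translation part, so the projection is idempotent-pure; composing, one obtains an idempotent-pure homomorphism $S^*\to G$, which agrees with $\sigma$ on the generators and hence equals $\sigma$. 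This is also the partial action underlying the isomorphism $C^*(S^*)\cong C^*(E)\rtimes G$ of Milan and Steinberg \cite{Milan}. The same construction with the larger semilattice $E(S^F)$ in place of $E(S^*)$ settles $S^F$. The main obstacle is showing that $S^*\to E(S^*)\rtimes G$ is well defined and faithful, equivalently that the only coincidences among the assigned partial translations are those already forced by $R_1$ and $R_4$, without the further relations $R_2,R_3$ that collapse $S^*$ onto the left inverse hull $I_l(S)$. This is precisely where embeddability enters: it guarantees that the positive generators inject into $G$ and act by genuinely invertible translations, so that a word with trivial translation part has no way to avoid being a partial identity, hence an idempotent. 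Were $S\to G$ not injective, two distinct products could receive the same translation, and an element such as $v_pv_{p'}^*$ with $\bar p=\bar{p'}$ would lie in $\sigma^{-1}(1)\setminus E(S^*)$; so the hypothesis is not merely convenient but is exactly what makes the degree in $G$ honest on the positive part and lets the argument close.
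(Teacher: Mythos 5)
Your computation of the maximal group homomorphic image is correct and in fact cleaner than the paper's: the paper records $G(S^F)=G(S^*)$ by maximality and then deduces the identification with the group generated by $S$ only as a consequence of E-unitarity (via the computation $v_a=v_av_b^*v_b=\dots=v_b$ whenever $\sigma(v_a)=\sigma(v_b)$), whereas you obtain $G(S^*)=G_u(S)$ directly from the adjunction and then invoke embeddability to see that $S\to G_u(S)$ is injective. Be aware, though, that the paper's own proof of this lemma actually establishes the \emph{converse} implication (E-unitarity of $S^*$ or of $S^F$ forces $S$ to embed in a group); the implication as literally stated is only proved later, in \autoref{cor:eunitaryall}, by exhibiting the explicit model $\tilde{G}$ of \autoref{thm:modelsf}, in which every element is a pair $(\sigma(s),[A])$ and the idempotents are exactly the pairs with first coordinate $1$. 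So you are proving a different direction than the paper's proof of this particular lemma does.

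The E-unitarity half of your argument has a genuine gap, and you have put your finger on it yourself. Declaring that $v_p$ should act as the partial translation by $\bar p$ and assembling these into a map $S^*\to E(S^*)\rtimes G$ only helps if that map is shown to be well defined and idempotent-pure, i.e.\ if every $w\in S^*$ with $\sigma(w)=1$ is forced to be an idempotent \emph{of $S^*$}, not merely to act as a partial identity. That is exactly the statement $\sigma^{-1}(1)=E(S^*)$ you are trying to prove, and your closing sentence (``a word with trivial translation part has no way to avoid being a partial identity, hence an idempotent'') asserts it rather than proves it. What is missing is a \emph{faithful} representation $\rho$ of $S^*$ by partial bijections on which the $G$-degree is visible: given such a $\rho$, $\sigma(w)=1$ implies $\rho(w)$ is a partial identity, hence $\rho(w)=\rho(w)^2=\rho(w)^*$, hence $w=w^2=w^*$ by faithfulness. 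Producing that faithful representation is precisely the content of the model $\tilde{G}$ (the action of $S^*$ on equivalence classes of finite subsets of $G$), and it is where embeddability does real work; injectivity of $S\to G$ on generators alone does not suffice, since $S^F=F(S)/R_4$ is a quotient of the E-unitary semigroup $F(S)$ and quotients of E-unitary inverse semigroups need not be E-unitary. So your outline identifies the right strategy, but the step you label ``the main obstacle'' is the whole theorem; as written the argument is circular.
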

\begin{proof}
Firstly,  $S^F\to G(S^F)$ factors through $\beta \colon S^F\to S^*$ given by $v_a^*v_a\sim 1$. Therefore, by maximality $S^F$ and $S^*$ share the same maximal group homomorphic image, i.e. $G(S^*)=G(S^F)$.

Let $S^*$ be E-unitary and assume $\sigma(v_a)=\sigma(v_b)$ in $G(S^*)$. Then $v_bv_a^*$ is a self-adjoint idempotent, and we obtain
$$v_a=v_av_b^*v_b=v_bv_a^*v_b=v_bv_a^*v_bv_a^*v_a=v_bv_a^*v_a=v_b$$
Therefore, $\sigma$ is an embedding of the semigroup $\{v_a\colon a\in S\}$ in $G(S^*)$, which can be identified with $S$. Since $v_a$ are generators of $S^*$, their image under $\sigma$ generates the group $G(S^*)$ by maximality. 

If $S^F$ is E-unitary and $\sigma\beta(v_a)=\sigma\beta(v_b)$, then $v_bv_a^*$ is a self-adjoint idempotent in $S^F$ and therefore so is $v_bv_a^*\in S^*$. Hence, $S$ is embeddable in a group in this case as well.
\end{proof}

\begin{pgr}\label{pgr:model}
In the case when $S$ is embeddable in a group, there exists a model realizing the universal inverse semigroups $S^*$ and $S^F$. First, let us give a model for $S^F$, the predecessor of $S^*$. 

For any set $X$ there exists a free inverse semigroup $F(X)$. The proof in   \cite{Preston} and \cite{Banakh}, uses essentially the free group $G(X)$ on the set $X$, and embedding of $X$ in the free group. The idea of our model is based on the model given in \cite{Banakh} for $F(X)$.
 
As shown in \autoref{lma:allfree}, $S^F$ is a quotient of the free inverse semigroup on $F(S)$ by congruence generated by relations in the semigroup $S$. Therefore, the quotient of the model for $F(S)$ gives a correct model for $S^F$ if $S$ is embeddable in a group. And in this case, a group generated by $S$ is used instead of the free group on the set. We denote by $G$ a group generated by $S$, and let $exp(G)$ be the set of all finite subsets of $G$ containing $1$. Due to maximality of the maximal group homomorphic image of $S^F$, and the fact that $S^F$ is generated by elements $v_a$, we have $G(S^F)=G$. Denote by $\sigma$ the quotient map $S^F\to G$. 

 For any $A\in exp(G)$, $g\in G, a,b\in S$ define a relation
 \begin{equation}\label{sim} A\cup\{1,g,ga,gab\} \sim A\cup\{1,g,gab\}\end{equation}
 One can easily verify that this is an equivalence relation and generates a congruence on $exp(G)$. 
 It follows in particular that 
$$\{1,g,ga^{-1},ga^{-1}b^{-1}\}\sim \{1,g, ga^{-1}b^{-1}\},\quad \{1,g,ga^{-1},gb\}\sim \{1,ga^{-1}, gb\}.$$ 
We will formally write $g_1\leq g\leq g_2$ if $g=g_1a$, $g_2=gb$ for some $a,b\in S$. Then the relation $\sim$ on $exp(G)$ can be formulated as
$$A\cup\{g\}\sim A \mbox{ if and only if there exist } g_1,g_2\in A \mbox{ such that } g_1\leq g\leq g_2.$$

For any $A\in exp(G)$ denote its equivalence class by $[A]$ and the set of all equivalence classes by $exp(G)/\sim$. Clearly, $\sim$ is stable under taking union, i.e. 
$$A\sim A',\ B\sim B' \Rightarrow A\cup B\sim A'\cup B'.$$
 This equivalence is also stable under multiplication by elements of $G$ from the left:
$$gA=\{1\}\cup\{ga\colon a\in A\},$$
 so that the class $g[A]=[gA]$ is well-defined. Note that this is not an action of $G$ on $exp(G)$, because in general $(gh)A\neq g(hA)$.  And we have $g^{-1}(gA)=\{g^{-1}\}\cup A$ for any $A\in exp(G)$, $g\in G$.

Define a partial order on $exp(G)/\sim$, for any $A,B\in exp(G)$:
$$[A]\leq [B] \mbox{ if and only if there exists }  B'\sim B \mbox{ such that } A\subset B'$$
This means in particular, that for any $A,B\in exp(G)$ and $g\in G,a,b\in S$

$$A\subset B \Rightarrow [A]\leq  [B],$$
$$[\{1,g,ga\}]\leq [A\cup\{1,g, gab\}], \quad [\{1,ga^{-1},ga^{-1}b^{-1}\}]\leq [A\cup\{1,g, ga^{-1}b^{-1}\}],$$
$$[\{1,g,ga^{-1}\}]\leq  [A\cup\{1,gb, ga^{-1}\}],\quad [\{1,g,ga^{-1}\}]\leq  [A\cup\{1,g, ga^{-1}b^{-1}\}]  $$

The partial order $\leq$ on $exp(G)/\sim$ is stable under multiplication from the left by elements of $G$ due to stability of $\sim$.

We say that an element $g=a_1a_2...a_n\in G$ is written in a reduced form if $a_i$'s are alternating elements from $S$ and $S^{-1}$ with $a_i\neq a_{i+1}^{-1}$ for all $1\leq i\leq n-1$. Define a set $I_g=\{1,a_1,a_1a_2,...,a_1a_2...a_n\}\in exp(G)$ corresponding to a fixed reduced form of $g$. A set $A\in exp(G)$ is called \emph{full} if for any $g\in A$ it contains the subset $I_g$ for some reduced form of $g$. This means that any full set equals $I_{g_1}\cup I_{g_2}\cup...\cup I_{g_n}$ for some $g_i\in G$. Denote by $E$ the set of equivalence classes with respect to $\sim$ of full sets in $exp(G)$. Note that a full set may be equivalent to a non-full set.

Define the set
 $$\tilde{G}=\{(g,[A])\in G\times E \colon [I_g]\leq  [A] \mbox{ for some reduced form of }g\}.$$ 
 In fact, it is sufficient to require that $g\in A'$ for some full set $A'\sim A$. We define product and inverse operation on $\tilde{G}$:
\begin{equation}\label{modelprod}
(g,[A])(h,[B])=(gh,[A\cup gB]),
\end{equation} 
\begin{equation}\label{modelinv}
(g,[A])^*=(g^{-1},g^{-1}[A]).
\end{equation} 
Since $[I_g]\leq  [A]$, we have that $[I_{g^{-1}}]=g^{-1}[I_g]\leq g^{-1}[A]$. Hence $(g,[A])^*\in \tilde{G}$.

\begin{thm}\label{thm:modelsf}  The set $\tilde{G}$ with product and inverse operation defined by (\ref{modelprod}) and (\ref{modelinv}) forms an inverse semigroup isomorphic to $S^F$.
\end{thm}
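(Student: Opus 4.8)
The plan is to realize $\tilde{G}$ as the image of Banakh's set-theoretic model for the free inverse semigroup on the set $S$, obtained by pushing that model through the universal group of $S$, and then to identify the resulting congruence with $R_4$, so that \autoref{lma:allfree} yields $S^F\cong\tilde G$. Write $\hat G$ for the free group on the set $S$ and $q\colon\hat G\to G$ for the canonical surjection onto the group $G$ generated by $S$; since $S$ is embeddable in a group, $q$ restricts to an embedding of $S$. Banakh's construction \cite{Banakh} attaches to $\hat G$ an inverse semigroup $M(\hat G)$ of pairs $(\hat g,[\hat A])$, with product and inverse given by the formulas analogous to (\ref{modelprod}) and (\ref{modelinv}), together with an isomorphism $M(\hat G)\cong F(S)$ carrying $v_a$ to $(a,[\{1,a\}])$; the object $\tilde G$ is the verbatim analogue of $M(\hat G)$ with $\hat G$ replaced by $G$.

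First I would verify that $(\tilde G,\cdot,{}^{*})$ is a well-defined inverse semigroup. That (\ref{modelprod}) and (\ref{modelinv}) descend to equivalence classes is immediate from the two stability properties of $\sim$ recorded above (stability under unions and under left translation by $G$); that the product stays inside $\tilde G$ reduces to the inequality $[I_{gh}]\le[A\cup gB]$, which I would obtain by concatenating a geodesic to $g$ inside $A$ with the $g$-translate of a geodesic to $h$ inside $B$ and reducing, together with the fact that $A\cup gB$ is again equivalent to a full set. Associativity is in fact the set identity $A\cup g(B\cup hC)=A\cup gB\cup ghC$. The idempotents are exactly the elements $(1,[A])$; they commute because union is commutative, and a short computation using $[A\cup\{g\}]=[A]$ (valid since $[I_g]\le[A]$) gives $(g,[A])(g,[A])^{*}(g,[A])=(g,[A])$. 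Uniqueness of inverses then follows from \autoref{vagner}.

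Next I would set up the comparison map. By the universal property of the free inverse semigroup $F(S)$ on the set $S$, the assignment $v_a\mapsto(a,[\{1,a\}])$ extends to an inverse-semigroup homomorphism $\phi\colon F(S)\to\tilde G$; equivalently, the pushforward $q_{*}\colon\exp(\hat G)\to\exp(G)$, $q_{*}(\hat A)=q(\hat A)\cup\{1\}$, sends the generating move of Banakh's equivalence to the defining move (\ref{sim}) of $\sim$, so $q$ induces $\bar q\colon M(\hat G)\to\tilde G$ with $\phi=\bar q\circ(\text{Banakh's isomorphism})$. A direct computation shows that $\phi$ annihilates $R_4$: if $ab=c$ in $S$ then $(a,[\{1,a\}])(b,[\{1,b\}])=(ab,[\{1,a,ab\}])=(c,[\{1,c\}])$, since $1\le a\le ab$ forces $\{1,a,ab\}\sim\{1,ab\}$. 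Hence $\phi$ factors through a homomorphism $\bar\phi\colon S^F\to\tilde G$. Surjectivity of $\bar\phi$ I would obtain by lifting: every full set is a union of geodesics $I_{g_i}$, each geodesic is traced by a word in the generators, and so every $(g,[A])\in\tilde G$ is a product of the elements $(a,[\{1,a\}])$ and their inverses.

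The hard part is injectivity of $\bar\phi$. Through the isomorphism $F(S)\cong M(\hat G)$ this is equivalent to showing that the congruence $\ker\bar q$ on $M(\hat G)$ coincides \emph{exactly} with the congruence generated by $R_4$; one inclusion is the computation above. I expect the reverse inclusion to be the main obstacle: one must verify that passing from the free group $\hat G$ to $G$ creates no coincidences $\bar q(x)=\bar q(y)$ beyond those produced by applying relations of $S$ inside the geodesics, i.e. that the move (\ref{sim}) over $G$ decomposes into the pure Munn reductions already present in $M(\hat G)$ together with the elementary $R_4$-moves. This is precisely the point at which the embeddability of $S$ in a group is indispensable, since it guarantees that $S$, and hence every edge $x\to xa$ with $a\in S$ of the Cayley graph of $G$, sits in $G$ without collapse, so that full sets over $G$ faithfully record the combinatorics of $F(S)$ modulo $R_4$; the kernel of $q$ is then the normal closure of the relations of $S$ by the very definition of $G$. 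Granting this matching of congruences, $\bar\phi$ is a bijective homomorphism, giving the asserted isomorphism $\tilde G\cong S^F$.
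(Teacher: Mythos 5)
Your overall route is the same as the paper's: you verify directly that $\tilde G$ is an inverse semigroup (associativity as a set identity, inverses, commuting idempotents $(1,[A])$, Vagner), and you realize $\tilde G$ as the quotient of the Preston--Banakh model of $F(S)$ over the free group under the canonical surjection onto $G$, checking that the elementary $R_4$-moves are killed. All of that matches the paper and is fine. The problem is that you have not actually proved the isomorphism: the entire content of injectivity of $\bar\phi\colon S^F\to\tilde G$ is contained in your sentence beginning ``I expect the reverse inclusion to be the main obstacle'' and ending with ``Granting this matching of congruences.'' Asserting that embeddability of $S$ in $G$ ``guarantees that full sets over $G$ faithfully record the combinatorics of $F(S)$ modulo $R_4$'' is a restatement of what must be shown, not an argument; the move (\ref{sim}) over $G$ identifies sets via chains $g_1\leq g\leq g_2$ that need not come from a single application of a relation of $S$ to a geodesic, and it is exactly here that a coincidence $\bar q(x)=\bar q(y)$ could a priori arise that is not generated by $R_4$.

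The paper closes this gap constructively: it writes down an explicit map $\tilde G\to S^F$ inverse to $s\mapsto(\sigma(s),[I_s])$. Concretely, given $(g,[A])$ with $A$ full, each geodesic $I_{g_j}$ in the decomposition $A=\bigcup_j I_{g_j}$ is converted (after a branching procedure that splits a geodesic whenever two of its prefixes coincide in $G$) into a word $s_{g_j}=v_{a_1}^*v_{b_1}\cdots v_{a_m}^*v_{b_m}$, and the element assigned to $(g,[A])$ is $(s_{g_1}s_{g_1}^*\cdots s_{g_l}s_{g_l}^*)\,s_{g_r}$ where $I_{g_r}$ is a geodesic ending at $g$; one then checks this is well defined on $\sim$-classes and is a *-homomorphism inverting $\bar\phi$. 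Some such explicit construction (or an equivalent normal-form argument in $S^F$) is indispensable; without it your proposal establishes only that $\tilde G$ is an inverse semigroup receiving a surjection from $S^F$, not that the two are isomorphic.
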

\begin{proof}
To see that the product is associative, take $g,h,f\in G$ and full sets $A,B,C$ in $exp(G)$ and compute
$$A\cup gB\cup(gh)C=A\cup (gB\cup\{g\})\cup (ghC)=A\cup g(B\cup hC).$$
For any $(g,[A])\in\tilde{G}$ we may assume $I_g\subset A$, which implies $g(g^{-1}A)=A$. Consequently,
$$(g,[A])(g^{-1},g^{-1}[A])(g,[A]) =(g,[A\cup g(g^{-1}A)\cup A])=(g,[A]).$$
Checking similarly the corresponding equation for $(g^{-1},g^{-1}[A])$, we obtain that (\ref{modelinv}) defines an inverse element for $(g,[A])$.

The idempotents in $\tilde{G}$ correspond to elements of $E$: $$(g,[A])(g,[A])^*=(1,[A])\leftrightarrow [A]\in E.$$
The product $[A][B]=[A\cup B]$ on $E$ is commutative. Moreover, $[A]\leq[B]$ if and only if idempotents $(1,[B])\leq (1,[A])$.

Thus, $\tilde{G}$ is an inverse semigroup.

As mentioned above, the semigroup $S^F$ is a quotient of the free inverse semigroup $F(S)$ on $S$ as a set by equivalence relation generated by relations among elements in $S$. Therefore it is sufficient to show that the model described is a quotient of the model for $F(S)$ by relations in $S$. Let $T$ be the free monoid generated by the set $S$. For $a,b\in S$ we denote their product in $T$ by $ab$ and their product in $S$ by $a\cdot b$. Now $T$ is a cancellative monoid and clearly $T^F=F(S)$ by definition, with a maximal group homomorphic image $G(T)$ equal to the free group on the set $S$ and to $G(F(S))$. One can easily see that the model for $T^F$ coincides with the model in \cite{Preston} for $F(S)$, which we describe further.
   
	 Since $G(T)$ is a free group, every its element has a reduced form. For any $g\in G(T)$ with a reduced form $r(g)=a_1a_2...a_n$, where $a_i\in S\cup S^{-1}\subset G(T)$, we define
$$\hat{g}=\{1,a_1,a_1a_2,...,a_1a_2...a_n\}  $$
 The set $A\in exp(G(T))$ is called saturated if $g\in A$ implies $\hat{g}\subset A$. Then
$$F(S)=\{(a,A)\colon A \mbox{ is saturated and } r(a)\in A\}\subset G(T)\times exp(G(T))$$ 
The product and inverse operation on $F(S)$ are given by
$$(a,A)(b,B)=(ab,A\cup bB),$$
$$(a,A)^*=(a^{-1},a^{-1}A).$$
The idempotent semilattice in $F(S)$ consists of elements $(1,A)$ for all saturated sets $A$.

The generators of $F(S)$ are
	$$t_a=(a,\{1,a\}), t_a^*=(a^{-1},\{1,a^{-1}\})$$
for all $a\in S$, where $a^{-1}$ is the inverse of $a$ in $G(T)$. Note that for $a,b\in S$ we have
$$t_at_b=(ab, \{1,a,ab\}) \mbox{ and } t_{(a\cdot b)}=(a\cdot b,\{1,a\cdot b\}),$$
and the same for their inverses. The homomorphism $F(S)\to S^F$ is a quotient map by equivalence $t_at_b\sim t_{(a\cdot b)}$. This consists of equivalences on $G(T)$ and $exp(G(T))$:
$$ab\sim a\cdot b,  \{1,a,ab\}\sim \{1,a\cdot b\}.$$ 
It is easy to see that  the quotient of $G(T)$ by the first equivalence equals $G$. The second reduces subsets of $G(T)$ to subsets of $G$ and induces a new equivalence on it, which is given by (\ref{sim}). Under this equivalence  a  saturated subset of $G(T)$ turns into a full subset of $G$. Thus we obtain the model $\tilde{G}$.

 For the sake of completeness, we give an isomorphism between $\tilde{G}$ and $S^F$. For any element in $S^F$ we define a corresponding element in $G\times E$. Obviously, any element of $S^F$ can be  written as a product of alternating symbols of the type $v_a$ and $v_b^*$, for $a,b\in S$. We call such a form of an element \emph{a reduced form}. Let $s\in S^F$ be an element with a reduced form $v_{a_1}^*v_{a_2}...v_{a_n}^*$, where $a_1,..a_n\in S$. Define 
$$I_s=\{1,\sigma(v_{a_1}^*),\sigma(v_{a_1}^*v_{a_2}),...,\sigma(s)\}\in exp(G),$$
$$\tilde{s}=(\sigma(s),[I_s])\in G\times E.$$
Then because of the equivalence relation we put on $exp(G)$, the map $s\mapsto \tilde{s}$ does not depend on the choice of the reduced form for $s$. Clearly, this map is a  *-homomorphism.  

Let us define the reverse map $\tilde{G}\to S^F$. Consider a full set $$I_g=\{1,a_1^{-1},a_1^{-1}b_1,a_1^{-1}b_1a_2^{-1},...,a_1^{-1}b_1a_2^{-1}...a_n^{-1}b_n\}\in exp(G)$$
 corresponding to $g\in G$ and its reduced form $g=a_1^{-1}b_1a_2^{-1}...a_n^{-1}b_n$.
  
  First assume that $I_g$ contains precisely one element of $S^{-1}$ (or $S$), denote it $a_1^{-1}$ (or $a_1$). Then assume that  $I_g$ contains precisely one element in $a_1^{-1}S$ (or $a_1S^{-1}$), etc. Under all these assumptions we get a unique chain $a_1^{-1},b_1,...a_n^{-1},b_n$ and define
  $$s_g=v_{a_1}^*v_{b_1}...v_{a_n}^*v_{b_n},$$
 as an element corresponding to $ (g,[I_g])$.
 
  If on some step this is not true and we have, for instance $$a_1^{-1}b_1...a_k^{-1}b_ka_{k+1}^{-1}b_{k+1}...a_i^{-1}b_i=a_1^{-1}b_1...a_k^{-1}b_kc^{-1}$$ 
  for some $c\in S$, then we split $I_g$ into a union of  sets
$$I_{g_1}=\{1,a_1^{-1},... ,a_1^{-1}b_1...b_k,a_1^{-1}...b_kc^{-1},$$ $$a_1^{-1}...b_kc^{-1}a_{i+1}^{-1},...,a_1^{-1}...b_kc^{-1}a_{i+1}^{-1}...b_n\}$$   
$$\mbox{ and } I_{g_2}=\{1,a_1^{-1},... ,a_1^{-1}...b_k,a_1^{-1}...b_kc^{-1},$$
$$a_1^{-1}...b_kc^{-1}b_i^{-1},...,a_1^{-1}...b_kc^{-1}b_i^{-1}...b_{k+1}^{-1}\}, $$
and then consider these sets separately and check them for the assumptions. Repeating this process at the end we get $I_g=\cup_{j=1}^lI_{g_l}$, where each of the sets is full and satisfies the required property, and at least one of them  ends with $g$. For each of these sets $I_{g_j}=\{1,c_1^{-1},c_1^{-1}d_1,...c_1^{-1}d_1c_m^{-1}d_m\}$ define $s_{g_j}$ as above.

Now  the element corresponding to $(g,[\cup_{j=1}^lI_{g_l}])$ is $ (s_{g_1}s_{g_1}^*...s_{g_l}s_{g_l}^*)s_{g_r}$, where $s_{g_r}$ is the element corresponding to the set containing $g$.
The element corresponding to the idempotent $(1,[\cup_{j=1}^lI_{g_l}])$ is $(s_1s_1^*...s_ls_l^*)$. This map is well-defined on  equivalence classes, because it depends on a unique representative of the class. Since any full set is a union of $I_g$, we obtain a well-defined map $\tilde{G}\to S^F$. One can easily verify that this map is also a *-homomorphism. Clearly, the maps $S^F\to\tilde{G}$ and $\tilde{G}\to S^F$ are inverse to one another.   
\end{proof}

For the model of $S^*$ we need to formulate the homomorphism $\beta\colon S^F\to S^*$ in terms of the model of $S^F$. Recall that $\beta$ is given on $S^F$ by equivalence $v_a^*v_a\sim' 1$ for all $a\in S$ and generates the following equivalence for the model of $S^F$.
$$A\cup\{1,g,ga^{-1}\}\sim' A\cup \{1,g\}$$
for any $A\in exp(G)$, $g\in G$, $a\in A$. It follows in particular that
$$\{1,a^{-1}\}\sim' \{1\},\ \{1,g,ga\}\sim' \{1,ga\}$$
Similarly to $\sim$, this new equivalence is stable under taking the union and multiplying by elements of $G$ from the left as defined above. Moreover, $\sim'$ substitutes $\sim$:
$$\{1,g,ga,gab\}\sim'\{1,ga,gab\}\sim'\{1,gab\}$$

Denote by $E'$ the quotient of $exp(G)$ by $\sim'$, and by $[A]$ the equivalence class of $A\in exp(G)$. The product, inverse operation and  partial order on $E'$ are defined similarly to $E$.  Thus \autoref{thm:modelsf} implies the following. 

\begin{cor}\label{cor:modelss} The inverse semigroup $S^*$ is isomorphic to the inverse semigroup
$$\{(g,[A])\in G\times E' \colon [I_g]\leq  [A] \mbox{ for some reduced form of }g\},$$
with the product and inverse operation defined above, idempotent semilattice equal to $E'$, generated by elements
$$v_a=(a,[\{1,a\}]),\ v_a^*=(a^{-1},[\{1\}]).  $$
\end{cor}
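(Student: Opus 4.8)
The plan is to read the corollary off \autoref{thm:modelsf} by passing to the quotient $\beta\colon S^F\to S^*$. Recall from \autoref{lma:allfree} that $S^*$ is exactly the quotient of $S^F$ by the congruence generated by the relations $v_a^*v_a\sim 1$ for $a\in S$. Since \autoref{thm:modelsf} identifies $S^F$ with $\tilde{G}$, it suffices to transport these relations into the model and to recognize the resulting quotient as the inverse semigroup built from $E'$.

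First I would compute the defining relation inside $\tilde{G}$. Under the isomorphism of \autoref{thm:modelsf} the generator $v_a$ corresponds to $(a,[\{1,a\}])$, so by (\ref{modelinv}) we get $v_a^*=(a^{-1},a^{-1}[\{1,a\}])=(a^{-1},[\{1,a^{-1}\}])$, and (\ref{modelprod}) gives
$$v_a^*v_a=(1,[\{1,a^{-1}\}\cup a^{-1}\{1,a\}])=(1,[\{1,a^{-1}\}]).$$
Hence the relation $v_a^*v_a\sim 1$ is the identification of idempotents $(1,[\{1,a^{-1}\}])=(1,[\{1\}])$, that is, precisely the generating instance $\{1,a^{-1}\}\sim'\{1\}$ of $\sim'$. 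This already explains the generator formula in the statement: after passing to classes in $E'$ one has $v_a^*=(a^{-1},[\{1\}])$.

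The central step is to show that quotienting $\tilde{G}$ by these idempotent identifications returns exactly the model built from $E'$. I would consider the natural map $\Phi\colon\tilde{G}\to\{(g,[A])\in G\times E'\colon[I_g]\leq[A]\}$ sending $(g,[A])$ to itself, now reading the class in $E'$. It is well defined and order preserving because $\sim\,\subseteq\,\sim'$, the containment exhibited by the reduction $\{1,g,ga,gab\}\sim'\{1,ga,gab\}\sim'\{1,gab\}$ recorded before the statement; it is a homomorphism because $\sim'$ is stable under union and under left multiplication by $G$, which is precisely what makes (\ref{modelprod}) and (\ref{modelinv}) descend to $E'$; and it is onto. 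On one hand $\Phi$ satisfies the relations $v_a^*v_a\sim 1$ by the computation above, so it factors through $\beta$. On the other hand, conjugating the idempotent relation $(1,[\{1,a^{-1}\}])=(1,[\{1\}])$ by an element $(g,[A])\in\tilde{G}$ reproduces exactly the defining clause $A\cup\{1,g,ga^{-1}\}\sim' A\cup\{1,g\}$ of $\sim'$ — a short computation using $g(g^{-1}A)=\{1,g\}\cup A$ — and a congruence is closed under conjugation, so the congruence generated by $v_a^*v_a\sim 1$ already realizes every identification made by $\Phi$. Hence $\ker\Phi$ equals that congruence, and $\Phi$ is the map $\beta$ under $S^F\cong\tilde{G}$.

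It remains to observe, exactly as in \autoref{thm:modelsf}, that the target $\{(g,[A])\in G\times E'\colon[I_g]\leq[A]\}$ is an inverse semigroup with idempotent semilattice $E'$: associativity, the two inverse identities, and commutativity of idempotents are checked verbatim, since those computations used only the stability properties of $\sim$ now shared by $\sim'$. Then $\Phi$ descends to an isomorphism $S^*=S^F/\ker\beta\cong\{(g,[A])\in G\times E'\colon[I_g]\leq[A]\}$ carrying $v_a$ to $(a,[\{1,a\}])$ and $v_a^*$ to $(a^{-1},[\{1\}])$. The genuinely delicate point is the inclusion $\ker\Phi\subseteq\ker\beta$, namely that every identification made by $\sim'$ is already a consequence of the relations $v_a^*v_a\sim 1$; this is what the conjugation identity above secures. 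That $\sim'$ was verified beforehand to be stable under union and left multiplication — i.e.\ to be a congruence — is in turn what guarantees that the target model is a well-defined inverse semigroup and that $\Phi$ is a homomorphism at all.
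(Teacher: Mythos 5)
Your proposal is correct and follows essentially the same route as the paper: the paper likewise obtains the corollary by translating the relation $v_a^*v_a\sim 1$ into the model of $S^F$ from \autoref{thm:modelsf}, observing that it generates the equivalence $\sim'$ on $exp(G)$, and reading off $S^*$ as the resulting quotient with idempotent semilattice $E'$. Your write-up merely supplies more detail than the paper (the explicit computation $v_a^*v_a=(1,[\{1,a^{-1}\}])$ and the conjugation argument showing the generated congruence realizes every clause $A\cup\{1,g,ga^{-1}\}\sim' A\cup\{1,g\}$), which the paper leaves implicit in the phrase ``Thus \autoref{thm:modelsf} implies the following.''
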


\begin{cor}\label{cor:eunitaryall}
	The inverse semigroups $S^F$ and $S^*$ are E-unitary if and only if $S$ is embeddable in a group.
\end{cor}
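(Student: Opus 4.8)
The plan is to read the corollary as the conjunction of two implications, each already within reach. The implication that E-unitarity of $S^F$ or of $S^*$ forces $S$ to embed in a group is exactly the content extracted in the proof of \autoref{lma:eunitary}: under the E-unitary hypothesis one shows that $\sigma$ (respectively $\sigma\beta$) is injective on the generating isometries $v_a$, so that $S\cong\{v_a : a\in S\}$ sits inside the maximal group homomorphic image. Thus for the corollary it remains only to prove the converse, and this is precisely the direction for which the explicit model was constructed.

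For the converse, I would assume $S$ embeds in a group and let $G$ denote the group it generates. Here one passes to the models: \autoref{thm:modelsf} gives $S^F\cong\tilde{G}$, and \autoref{cor:modelss} gives the analogous description of $S^*$ over the semilattice $E'$. In both models two facts are all that is needed: the idempotents are precisely the pairs $(1,[A])$ (as recorded in the proof of \autoref{thm:modelsf}), and the maximal group homomorphic image is $G$ with quotient map $\sigma(g,[A])=g$. Granting these, one has $\sigma^{-1}(1)=\{(1,[A])\}=E(\tilde{G})$, which is exactly the defining condition for E-unitarity, and the same reads off verbatim for the $E'$-model of $S^*$.

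The only point that genuinely needs checking is that the congruence defining $G(\tilde{G})$ collapses exactly the second coordinate, so that $\sigma$ is projection onto $g$. I would verify this directly from the product rule (\ref{modelprod}): multiplying $(g,[A])$ and $(h,[B])$ by a common idempotent $(1,[C])$ yields $(g,[A\cup gC])$ and $(h,[B\cup hC])$, which can coincide only when $g=h$, while any two pairs sharing the first coordinate are identified after multiplying by a suitable idempotent. Hence $G(\tilde{G})=G$ and $\sigma$ is the first projection, consistent with the identification $G(S^*)=G(S^F)\cong G$ already noted in \autoref{lma:eunitary}.

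I do not expect a serious obstacle: the corollary is an assembly of \autoref{lma:eunitary} with the model of \autoref{thm:modelsf} and \autoref{cor:modelss}, and the substantive work already resides upstream in building that model. If anything is delicate it is confirming that the computation of $\sigma$ and of $\sigma^{-1}(1)$ transfers unchanged from the $E$-model of $S^F$ to the $E'$-model of $S^*$; but since the product, the inverse, and the description of idempotents are formally identical in the two settings, the argument applies word for word.
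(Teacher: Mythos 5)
Your proposal is correct and follows essentially the same route as the paper: the ``only if'' direction is delegated to \autoref{lma:eunitary}, and the ``if'' direction is read off from the models of \autoref{thm:modelsf} and \autoref{cor:modelss} by observing that every element has the form $(\sigma(s),[A])$ and that the idempotents are exactly the pairs with first coordinate $1$, whence $\sigma^{-1}(1)=E$. Your extra check that the first projection realizes the maximal group quotient is a harmless (and in fact not strictly necessary) refinement, since $E\subseteq\sigma^{-1}(1)\subseteq\pi_1^{-1}(1)=E$ already follows once the first projection is known to factor through $\sigma$.
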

\begin{proof}
		Due to \autoref{lma:eunitary}, it is sufficient to show the ''if'' part. Suppose $S$ generates a group $G$. Then looking at the model for $S^F$ given by \autoref{thm:modelsf}, any $s\in S^F$ equals
		$(\sigma(s),[A])$ for some set $A\in exp(G)$. Moreover, $s$ is an idempotent if and only if it equals $(1,[A])$. Therefore, $\sigma^{-1}(1)=E(S^F)$. A similar proof works for  $S^*$. 
\end{proof}

\end{pgr}

\section{The reduced C*-algebras of the universal inverse semigroups}

\begin{pgr}  As a consequence of the facts described in the previous section, there is a connection between the representation theories of $S$ and $S^*$. 

Let $P$ be an inverse semigroup. \emph{A *-representation} of $P$ is a homomorphism $\pi$ of $P$ into $B(H)$ such that $\pi(s^*)=\pi(s)^*$ for any $s\in P$. It is clear that each $\pi(s)$ is a partial isometry. We say that $\pi$ is unital if it sends the unit in $P$ to the identity operator. We want to avoid further subtle details concerning the zero element. For this we ask that a *-representation of an inverse semigroup should assign the zero operator to the zero element in $P$ if the latter exists. \end{pgr} 

\begin{pgr} Let $S$ be a left cancellative semigroup. Similarly to the definition given in \cite{Grig}, we say that a representation $\pi$ of $S$ is \emph{an inverse representation} if the set $\pi(S)\cup \pi(S)^*$ generates a  semigroup of partial isometries, i.e. generates an inverse semigroup. It is known that the left regular representation of $S$ is inverse. Note that the well-known requirement of commuting range projections is not sufficient for a representation to be inverse. An example of an abelian semigroup with isometric but non-inverse representation  and a condition for admitting such a representation for general abelian semigroup were given in \cite{A2}.\end{pgr} 

\begin{lma}\label{reprs} There are one-to-one correspondences between inverse representations of $S$ and *-representations of $S^F,$ and between isometric inverse representations of $S$  and unital *-representations of $S^*$.
\end{lma}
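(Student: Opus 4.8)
The plan is to reduce both statements to the universal property of the free inverse semigroup $F(S)$ and of its quotient $S^F$. Recall from \cite{Preston} that $F(S)$ is characterized by the property that every map from the set $S$ into an inverse semigroup $P$ extends uniquely to a homomorphism $F(S)\to P$, which is automatically $*$-preserving since inverses in an inverse semigroup are unique. Composing with the quotient $\alpha\colon F(S)\to S^F$ of \autoref{lma:allfree}, and noting that $\alpha$ only imposes the relations $R_4$ coming from the multiplication in $S$, one obtains the reformulation I shall actually use: homomorphisms $S^F\to P$ into an inverse semigroup $P$ are in natural bijection with \emph{semigroup} homomorphisms $S\to P$. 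First I would record this reformulation carefully.

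For the first correspondence, suppose $\pi$ is an inverse representation of $S$ on $H$. By definition the set $\pi(S)\cup\pi(S)^*$ generates an inverse semigroup $T\subset B(H)$, and $a\mapsto\pi(a)$ is a semigroup homomorphism $S\to T$. Applying the universal property above with $P=T$ yields a unique $*$-homomorphism $\rho\colon S^F\to T\subset B(H)$ with $\rho(v_a)=\pi(a)$, which is exactly a $*$-representation of $S^F$. Conversely, given a $*$-representation $\rho$ of $S^F$, its image is an inverse subsemigroup of $B(H)$, being the homomorphic image of an inverse semigroup; restricting along the embedding $S\hookrightarrow S^F$, $a\mapsto v_a$, of \autoref{Schein} then produces a semigroup homomorphism $\pi=\rho|_S$ whose values $\pi(a)=\rho(v_a)$ together with their adjoints $\rho(v_a^*)$ generate $\rho(S^F)$, so that $\pi$ is an inverse representation. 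Since $S^F$ is generated by the $v_a$ and a $*$-representation is determined by its values on generators, these two assignments are mutually inverse, and I would verify this last point simply by evaluating on $v_a$.

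For the second correspondence I would build on the first and keep track of the extra relation. Recall that $\beta\colon S^F\to S^*$ of \autoref{lma:allfree} is the quotient by $v_a^*v_a\sim 1$. If $\pi$ is moreover isometric, then $\rho(v_a^*v_a)=\pi(a)^*\pi(a)=I_H$ for every $a$; since $v_a^*v_a\le 1$ in $S^F$ gives $\rho(v_a^*v_a)\le\rho(1)\le I_H$, the representation $\rho$ must be unital and satisfy $\rho(v_a^*v_a)=\rho(1)$, hence descends through $\beta$ to a unital $*$-representation of $S^*$. Conversely, a unital $*$-representation $\bar\rho$ of $S^*$ pulls back along $\beta$ to a unital $*$-representation $\rho=\bar\rho\circ\beta$ of $S^F$, and the corresponding $\pi$ satisfies $\pi(a)^*\pi(a)=\bar\rho(v_a^*v_a)=\bar\rho(1)=I_H$, i.e. $\pi$ is isometric. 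Composing these observations with the first bijection yields the asserted one-to-one correspondence.

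The main obstacle is conceptual rather than computational, and it lies in the forward direction of the first correspondence: the free-inverse-semigroup universal property requires an inverse semigroup as target, whereas $B(H)$ is merely a $*$-semigroup in which a product of partial isometries need not be a partial isometry. The hypothesis that $\pi$ be an \emph{inverse} representation is precisely what repairs this, for it guarantees that $\pi(S)\cup\pi(S)^*$ generates a genuine inverse semigroup $T\subset B(H)$, equivalently that the relevant source and range projections commute, giving a legitimate target for the universal property. I would therefore take care to isolate this step, since dropping the inverse-representation hypothesis is exactly what destroys the extension.
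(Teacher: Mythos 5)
Your proposal is correct and follows essentially the same route as the paper: both arguments extend $\pi$ to $S^F$ on generators via the freeness of $S^F$ over $S$, using the hypothesis that $\pi$ is an inverse representation precisely to guarantee that the generated subsemigroup of $B(H)$ is a genuine inverse semigroup (the paper cites Vagner's theorem for this, you cite the universal property of $F(S)$ composed with the quotient by the relations of $S$), and both handle the second correspondence by observing that isometricity of $\pi$ is equivalent to the relation $v_a^*v_a\sim 1$ being respected, so that the representation descends through $\beta\colon S^F\to S^*$.
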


\begin{proof}
Given an inverse representation $\pi$ of $S$ and $p\in S$, set $\tilde{\pi}(v_p)=\pi(p)$, and $\tilde{\pi}({v_p}^*)=\pi(p)^*$. Then extend $\tilde{\pi}$ to $S^F$ multiplicatively. Uniqueness of an inverse then follows from Vagner's theorem (\ref{vagner}) and the fact that a product of two partial isometries is a partial isometry if and only if the source projection of the first one commutes with the range projection of the second (see also Proposition 2.3 in \cite{Popov}). Given a *-representation $\tilde{\pi}$ of $S^F$ just set $\pi(s)=\tilde{\pi}(s)$, and the image of an inverse semigroup under *-homomorphism is an inverse semigroup.

The second statement is verified similarly. If $\tilde{\pi}$ is a unital *-representation of $S^*$, since $v_p^*v_p=1$ we get that $\pi(p)=\tilde{\pi}(v_p)$ is an isometry.
\end{proof}

\begin{pgr} Recall the definition of the reduced C*-algebra of an inverse semigroup (see \cite{Paterson} for details). Consider the Hilbert space $\ell^2(P)$ with standard basis $\delta_s, s\in P$. Define \emph{the left regular representation} $V\colon P\to B(\ell^2(P)) $ by  
\begin{equation}\label{invrep}V_s\delta_t=  \left\{
\begin{array}{cc}
	st &\mbox{ if } s^*st=t,\\ 0 &\mbox{ otherwise }
\end{array} \right.\end{equation}

Then $V$ is a *-representation. The reduced C*-algebra of $P$ is $C^*_r(P)=C^*(V_s|\ s\in P)\subset B(\ell^2(P))$. 
\end{pgr} 

\begin{pgr}  Recall the construction of the C*-algebra of a left cancellative semigroup (see \cite{Li}). Let $S$ be a left cancellative semigroup. Consider the Hilbert space $\ell^2(S)$ with standard basis $\delta_p, p\in S$. Define $V_p\in B(\ell^2(S)) $ by 
$$V_p\delta_q=\delta_{pq}$$
for all $p, q\in S$. Then one can check that 
\begin{equation}\label{repcn}V_p^*\delta_q= \left\{
\begin{array}{cc}
	\delta_r &\mbox{ if } q=pr,\\ 0 &\mbox{ otherwise }
\end{array} \right. 
\end{equation}
This is a faithful representation of $S$ by isometries called \emph{the left regular representation} of $S$. The C*-algebra $C^*_r(S)=C^*(V_p|\ p\in S)\subset B(\ell^2(S))$ is the \emph{reduced semigroup C*-algebra of} $S$. \end{pgr}

\begin{lma}\label{lma:lr} The left regular representation of $S$ induces a non-degenerate unital *-representation $V'$ of $S^*$ on $\ell^2(S)$.
\end{lma}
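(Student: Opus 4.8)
The plan is to deduce the statement directly from the second correspondence in \autoref{reprs}, after checking that the left regular representation of $S$ qualifies as an isometric inverse representation. First I would recall that each $V_p$ is an isometry: left cancellativity gives $V_p^*V_p=\mathrm{id}$ on $\ell^2(S)$, since $V_p^*V_p\delta_q=\delta_q$ by (\ref{repcn}). As noted in the text, the set $\{V_p\colon p\in S\}\cup\{V_p^*\colon p\in S\}$ generates an inverse semigroup of partial isometries, so $V\colon p\mapsto V_p$ is an isometric inverse representation of $S$.

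Next I would apply \autoref{reprs}. Its second assertion sets up a bijection between isometric inverse representations of $S$ and unital *-representations of $S^*$; under this bijection the representation $V$ is sent to the *-representation $V'$ of $S^*$ determined by $V'(v_p)=V_p$ and $V'(v_p^*)=V_p^*$, extended multiplicatively. The only relation that must be respected beyond those of $S^F$ is $v_p^*v_p\sim 1$, and it is preserved precisely because $V_p^*V_p=\mathrm{id}$. This is exactly the mechanism by which \autoref{reprs} produces a \emph{unital} representation, so no further verification of well-definedness is required.

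Finally, for non-degeneracy I would observe that $V'$ is unital and that the unit of $S^*$ acts as the identity operator. The unit $1\in S$ gives the generator $v_{1}=(1_G,[\{1\}])$ in the model of \autoref{cor:modelss}, and a short check shows that $(1_G,[\{1\}])$ is the identity element of $S^*$; moreover $V'(v_{1})=V_{1}=\mathrm{id}$ since $V_{1}\delta_q=\delta_q$. Hence the closed linear span of $\{V'(s)\xi\colon s\in S^*,\ \xi\in\ell^2(S)\}$ already contains $\mathrm{id}\,\xi=\xi$ for every $\xi$, so $V'$ is non-degenerate.

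The content here is light, and I expect the whole argument to be short. The one point that requires any care is the first step, namely confirming that the left regular representation is genuinely inverse so that \autoref{reprs} is applicable; but this is the known fact recalled just before that lemma, and the remaining verifications (the isometry property, unitality via $v_p^*v_p\mapsto\mathrm{id}$, and non-degeneracy) are immediate.
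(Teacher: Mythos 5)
Your argument is correct and follows essentially the same route as the paper: both verify that the left regular representation of $S$ is an isometric inverse representation (the paper justifies this by noting that every element of the generated semigroup is a shift operator on the standard basis $\{\delta_p\}$) and then invoke the second correspondence of \autoref{reprs} to obtain the unital *-representation $V'$ with $V'(v_p)\delta_q=\delta_{pq}$. Your additional explicit check of non-degeneracy via unitality is a harmless elaboration of a point the paper leaves implicit.
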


\begin{proof}
As noticed before, the left regular representation of $S$ is inverse, i.e. the semigroup $V(S)$ generated by the set $\{V_p\ |p\in S\}\cup \{V_p^*\ |p\in S\}$ is an inverse semigroup. The reason is that any element of $V(S)$ is a shift operator on the standard basis $\{\delta_p\}$.  Then due to Lemma \ref{reprs}, $V$ induces a *-representation of $S^*$ on $\ell^2(S)$, given on the generators by
$$V'(v_p)\delta_q=\delta_{pq}$$
\end{proof}

Note that $V'$ is not in general faithful, see \autoref{eg:free}. But the image $V'(S^*)$  can be identified with the left inverse hull $I_l(S)$ represented on $\ell^2(S)$.

\begin{lma}\label{lma:restr}  The left regular representation $V$ of the inverse semigroup $S^*$ restricts to a *-representation on $\ell^2(S)\subset \ell^2(S^*)$. This subrepresentation coincides with the *-representation $V'$  and its image is $I_l(S)$. 
\end{lma}

\begin{proof}
The Hilbert space $\ell^2(S)$ is naturally embedded in $\ell^2(S^*)$ by the map given by $\delta_s\mapsto \delta_{v_s}$ for all $s\in S$. It is sufficient to show invariance of $\ell^2(S)$ under the generating operators. For $s,t\in S$  due to (\ref{invrep}) and equality $v_s^*v_s=1$, we have
$$V(v_s)\delta_{v_t}=\delta_{v_sv_t}=\delta_{v_{st}}$$ 
Before checking the same for $v_s^*$, let us show that $v_sv_s^*v_t=v_t$ if and only if $t=sr$ for some $r\in S$. The implication ``$\Leftarrow$'' is obvious. 

Now suppose $t\neq sr$ for any $r\in S$. 
Then using  \autoref{lma:lr}, we have  $V'(v_t)\delta_1\neq\delta_{sr}$ for any $r\in S$. Since by relation (\ref{repcn}) operator $V'(v_s)V'(v_s^*)$ is a projection onto a closed linear span of $\{\delta_{sr}\ | r\in S\}$, we have $$V'(v_sv_s^*v_t)\delta_1=V'(v_s)V'(v_s^*)V'(v_t)\delta_1=0\neq V'(v_t)\delta_1$$
We conclude that $v_sv_s^*v_t\neq v_t$. Hence, using (\ref{invrep}) we obtain
$$V(v_s^*)\delta_{v_t}=\left\{
\begin{array}{cc}
	\delta_{v_r} &\mbox{ if } t=sr,\\ 0 &\mbox{ otherwise }
\end{array} \right. $$ 
We see that $\ell^2(S)$ as a subspace in $\ell^2(S^*)$ is invariant under all $V(v_s)$, $V(v_s^*)$ an therefore under the whole image $V(S^*)$.  Moreover, using identification $ \delta_{v_t}\leftrightarrow\delta_t$ we get $V|_{\ell^2(S)}=V'$.
\end{proof}

\begin{lma}\label{lma:short} The C*-algebra $C^*_{V'}(S^*)$ is isomorphic to $C^*_r(S)$ and the following short exact sequence holds.
\begin{equation}  0\longrightarrow J_r \longrightarrow C^*_r(S^*)\longrightarrow C^*_r(S)\longrightarrow 0\end{equation}
where $J_r$ is the kernel of restriction of  the left regular representation of $S^*$  onto $\ell^2(S)$.
\end{lma}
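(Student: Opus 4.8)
The plan is to package the previous lemma into a short exact sequence by recognising that the restriction of the left regular representation $V$ of $S^*$ to the invariant subspace $\ell^2(S)\subset\ell^2(S^*)$ is, by \autoref{lma:restr}, nothing but the representation $V'$, whose associated C*-algebra $C^*_{V'}(S^*)$ we must identify with $C^*_r(S)$. So the two genuinely new claims to verify are (i) the isomorphism $C^*_{V'}(S^*)\cong C^*_r(S)$, and (ii) exactness of the displayed sequence; everything else is bookkeeping about quotients of C*-algebras.

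For claim (i), I would argue as follows. By \autoref{lma:restr} the image $V'(S^*)$ is exactly the left inverse hull $I_l(S)$ acting on $\ell^2(S)$, and by construction $C^*_r(S)=C^*(V_p\mid p\in S)$ is generated inside $B(\ell^2(S))$ by the isometries $V_p$. Since $V'(v_p)=V_p$ on the identified subspace, the generators of $C^*_{V'}(S^*)$ are precisely the $V_p$ together with their adjoints and products, which is the same set of operators generating $C^*_r(S)$; hence the two C*-algebras coincide as subalgebras of $B(\ell^2(S))$. The only point needing care is that $C^*_{V'}(S^*)$ is the closed *-algebra generated by all $V'(s)$, $s\in S^*$, not just by the generating isometries $V_p$ — but every $V'(s)$ is a finite product of the $V'(v_p)=V_p$ and their adjoints, so the generated C*-algebras are literally equal, giving the isomorphism.

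For claim (ii), exactness, I would use the universal property of the reduced C*-algebra together with the fact that $V'$ is a *-representation of $S^*$. Concretely, $V'$ extends to a *-homomorphism $C^*_r(S^*)\to B(\ell^2(S))$, and since $V=V'\oplus(V|_{\ell^2(S)^\perp})$ decomposes $V$ along the invariant subspace $\ell^2(S)$ and its orthogonal complement, the map $C^*_r(S^*)\to C^*_{V'}(S^*)$ given by compressing to $\ell^2(S)$ is a surjective *-homomorphism. Its kernel is by definition $J_r$, the set of elements of $C^*_r(S^*)$ killed by the restriction to $\ell^2(S)$; that $J_r$ is a closed two-sided ideal is automatic as the kernel of a *-homomorphism. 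Combining this surjection with the identification $C^*_{V'}(S^*)\cong C^*_r(S)$ from claim (i) yields the short exact sequence
\begin{equation*}
0\longrightarrow J_r\longrightarrow C^*_r(S^*)\longrightarrow C^*_r(S)\longrightarrow 0.
\end{equation*}

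The main obstacle I anticipate is subtler than it first appears: one must be sure that the compression map $T\mapsto P T P$ (with $P$ the projection onto $\ell^2(S)$) is genuinely multiplicative on $C^*_r(S^*)$, not merely completely positive. This is exactly why invariance of $\ell^2(S)$ under all $V(s)$, proved in \autoref{lma:restr}, is indispensable: because $\ell^2(S)$ is invariant under the whole image $V(S^*)$, the projection $P$ commutes with every $V(s)$ on the relevant reducing decomposition, so compression restricts to a *-homomorphism on the generated C*-algebra rather than just a unital completely positive map. I would make this step explicit, since it is the linchpin that turns the Hilbert-space invariance into an algebra-level surjection with ideal kernel; the remaining identifications are then routine.
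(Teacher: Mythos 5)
Your proposal is correct and follows essentially the same route as the paper: identify $V'$ with the restriction of $V$ to the invariant (hence, since $V(S^*)$ is self-adjoint, reducing) subspace $\ell^2(S)$, note that $C^*_{V'}(S^*)$ and $C^*_r(S)$ have the same generating set of operators on $\ell^2(S)$, and read off the short exact sequence from the resulting surjective $*$-homomorphism with kernel $J_r$. The paper states this very tersely; your explicit remark that invariance under the $*$-closed set $V(S^*)$ is what makes the compression multiplicative is exactly the point the paper leaves implicit.
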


\begin{proof} Due to \autoref{lma:restr} and \autoref{lma:lr}, $V'$ can be viewed as a restriction of the left regular representation of $S^*$ onto $\ell^2(S)$, and at the same time as a *-representation of $S^*$ induced by the left regular representation $V_S$ of $S$. So, viewing $C^*_r(S)$ as a C*-subalgebra in $B(\ell^2(S^*))$ generated by $V'(S^*)$ we obtain  $C^*_r(S^*)/J_r\cong C^*_r(S)$. 
\end{proof}



\begin{pgr}\label{repg}

In the case of a semigroup $S$ embeddable in a group we can compute the left regular representations of $S^F$ and $S^*$. Namely, both representations are decomposable into a direct sum of representations, each of which can be realized by a representation on the group $G$, generated by $S$.
 For this we use the models for $S^F$ and $S^*$ given in \autoref{thm:modelsf} and \autoref{cor:modelss} and notations therein.

Recall that in this case the maximal group homomorphic image of $S^F$ equals $G$, $\sigma$ denotes the homomorphism $S^F\to G$. For any element $s=v_{a_1}^*v_{a_2}...v_{a_{n-1}}^*v_{a_n}\in S^F$, 
$$I_s=\{1,\sigma(v_{a_1}^*),\sigma(v_{a_1}^*v_{a_2}),...,\sigma(s)\}\subset G.$$
 This set depends on the form in which $s$ is written. But all forms of $s$ give equivalent sets in the class $[I_s]$. In the model for $S^F$, $s=(\sigma(s),[I_s])$ with idempotent $ss^*$ corresponding to $[I_s]$.
 
 Define for $s\in S^F$, $g\in G$
 $$G_s=\{g\in G\colon \mbox{ there exist } g_1,g_2\in I_s \mbox{ such that } g_1\leq g\leq g_2\}\subset G,$$
$$L_s=\{t\in S^F\colon I_{t^*}\subset G_s\}=\{t\in S^F\colon I_{t}\subset \sigma(t)G_s\}\subset S^F,$$
$$L_{s,g}=\{t\in S^F\colon I_t\subset \sigma(t)g\sigma(s)^{-1}G_s\}.$$

Suppose $g\in G$, $g_1,g_2\in A$, $g_1\leq g\leq g_2$ and $A\sim I_s$ for some fixed form of $s$. Then by definition of $\sim$, there exist $g_1',g_2',g_1'',g_2''\in I_s$ such that 
$$g_1'\leq g_1\leq g_2',\ g_1''\leq g_2\leq g_2''$$
Therefore, $g_1'\leq g\leq g_2''$ and $g\in G_s$. This shows why  $G_s$ does not depend on the form of $s$ and is well-defined. Similarly the sets $L_s$ and $L_{s,g}$ are well-defined.

For $s_1,s_2\in S^F$ define 
$$s_1\sim' s_2 \mbox{ if } I_{s_1^*}\sim I_{s_2^*}.$$
This is an equivalence relation on $S^F$ which does not generate a semigroup congruence. We denote by $[s]$ an equivalence class of $s\in S^F$ and by $Q$ the set of equivalence classes.

\begin{thm}\label{thm:sum}
Let $S$ be embeddable in a group, denote by $G$ the group generated by $S$. The left regular representation $V$ of $S^F$ is isomorphic to $\bigoplus_{[s]\in Q}\pi_{s}$, where $\pi_{s}$ are unital *-representations of $S^F$ on $\ell^2(G_s^{-1}\sigma(s))\subset \ell^2(G)$ defined by
\begin{equation}\label{lsg}
\pi_s(t)\delta_g= \left\{
\begin{array}{cc}
\delta_{\sigma(t)g}  &\mbox{ if } t\in L_{s,g},\\ 0 &\mbox{ otherwise }
\end{array} \right. 
\end{equation}    
\end{thm}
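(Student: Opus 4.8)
The plan is to exhibit $\ell^2(S^F)$ as an orthogonal sum of $V$-invariant subspaces indexed by $Q$, and then to identify the restriction of $V$ to each summand with the corresponding $\pi_s$ through an explicit unitary on the index sets. The first point is that left multiplication preserves source idempotents: formula (\ref{invrep}) gives $V_u\delta_t=\delta_{ut}$ exactly when $u^*ut=t$, and in that case $(ut)^*(ut)=t^*u^*ut=t^*t$. Since $t_1\sim' t_2$ means $I_{t_1^*}\sim I_{t_2^*}$, equivalently $t_1^*t_1=t_2^*t_2$, the operator $V_u$ sends each basis vector $\delta_t$ with $t\in[s]$ either to $0$ or to a basis vector indexed in the same class $[s]$. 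Hence every $\ell^2([s])=\overline{\mathrm{span}}\{\delta_t:t\in[s]\}$ is invariant under all $V_u$ (and under $V_{u^*}=V_u^*$), and as the classes partition $S^F$ we obtain $V\cong\bigoplus_{[s]\in Q}V|_{\ell^2([s])}$.

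Next I would set up the index bijection using the model of \autoref{thm:modelsf}. Every $t\in[s]$ has the form $t=(\sigma(t),[I_t])$ with $[I_{t^*}]=\sigma(t)^{-1}[I_t]$, and membership in $[s]$ forces $[I_{t^*}]=[I_{s^*}]$, so $[I_t]=\sigma(t)[I_{s^*}]$ and $t$ is determined by $\sigma(t)$ alone; thus $\Phi_s\colon t\mapsto\sigma(t)$ is injective on $[s]$. To compute its image I would test, for $g\in G$, whether $t_g:=(g,[g\,I_{s^*}])$ is a genuine element of $\tilde G$ lying in $[s]$. Its source class is $g^{-1}[g\,I_{s^*}]=[\{g^{-1}\}\cup I_{s^*}]$, which equals $[I_{s^*}]$ if and only if $g^{-1}\in G_{s^*}$; and the admissibility condition $[I_g]\le[g\,I_{s^*}]$, after left translation by $g^{-1}$ and using $g^{-1}I_g=I_{g^{-1}}$, becomes $I_{g^{-1}}\subset G_{s^*}$. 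The key lemma here is that $G_{s^*}$ is itself a full set, so that $g^{-1}\in G_{s^*}$ already yields $I_{g^{-1}}\subset G_{s^*}$; combined with the translation identity $G_{s^*}=\sigma(s)^{-1}G_s$ (which follows from $I_{s^*}=\sigma(s)^{-1}I_s$ and left invariance of $\le$) this gives $\mathrm{Im}\,\Phi_s=G_{s^*}^{-1}=G_s^{-1}\sigma(s)$. Consequently $U_s\colon\delta_t\mapsto\delta_{\sigma(t)}$ is a unitary $\ell^2([s])\to\ell^2(G_s^{-1}\sigma(s))$.

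Finally I would verify $U_sV_u=\pi_s(u)U_s$ on each $\delta_t$, $t\in[s]$. Whenever the left side is nonzero it equals $\delta_{\sigma(ut)}=\delta_{\sigma(u)\sigma(t)}$, which is precisely $\pi_s(u)\delta_{\sigma(t)}$ in its nonzero case, so the whole matter reduces to matching the two support conditions. In the model $u^*u\,t=(\sigma(t),[I_{u^*}\cup I_t])$, so $u^*ut=t$ holds iff $[I_{u^*}\cup I_t]=[I_t]$, which an auxiliary combinatorial lemma identifies with $I_{u^*}\subset G_t$. On the $\pi_s$ side, since $t\in[s]$ gives $G_{t^*}=G_{s^*}$, one computes $\sigma(t)\sigma(s)^{-1}G_s=\sigma(t)G_{s^*}=G_t$, whence the membership $u\in L_{s,\sigma(t)}$, i.e. $I_u\subset\sigma(u)G_t$, unwinds to $I_{u^*}\subset G_t$ as well. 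The two conditions coincide, so the intertwining holds and $V|_{\ell^2([s])}\cong\pi_s$.

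I expect the principal obstacle to be the two combinatorial facts invoked above: the fullness of $G_{s^*}$ (equivalently, that $\mathrm{Im}\,\Phi_s$ is all of $G_s^{-1}\sigma(s)$ rather than a proper subset) and the equivalence $[B\cup I_t]=[I_t]\Leftrightarrow B\subset G_t$ for full $B$. Both rest on careful bookkeeping with the congruence $\sim$ on $exp(G)$, in particular on the independence of $G_s$ from the chosen representative of $[s]$ established in \autoref{repg}; fullness of $G_{s^*}$ itself needs a short case analysis of cancellation in reduced words, showing that every prefix of an element trapped between two points of $I_{s^*}$ is again trapped between two points of $I_{s^*}$. Once these are in hand, the decomposition and the intertwining follow by the routine manipulations sketched above.
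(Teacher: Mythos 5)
Your proposal is correct and follows essentially the same route as the paper's proof: decompose $\ell^2(S^F)$ into the invariant subspaces $\ell^2([s])$, use $\sigma$ to identify $[s]$ with $G_s^{-1}\sigma(s)$, and match the support condition $u^*ut=t$ with membership in $L_{s,\sigma(t)}$ via the identity $\sigma(t)\sigma(s)^{-1}G_s=G_t$. The combinatorial facts you flag (that $[A]\leq[I_s]$ amounts to $A\subset G_s$, and the surjectivity of $\sigma$ onto $G_s^{-1}\sigma(s)$) are exactly the points the paper also relies on, stated there somewhat more tersely.
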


\begin{proof}
For any inverse semigroup $P$ and any $s_1,s_2\in P$ we have
$s_1^*s_1=s_2^*s_2 $  if and only if there exists $t\in P$  such that 
$$ts_1=s_2 \mbox{ and }t^*ts_1=s_1.$$
Therefore, for any $s\in P$, the space $$\ell^2(\{p\in P\colon p^*p=s^*s\})$$
is invariant under the left regular representation of $P$.

Now fix an element $s\in S^F$. Using the model for $S^F$,  $s^*s=(1,[I_{s^*}])$. Hence, $p\sim' s$ if and only if there exists $t\in S^F$ such that $ts=p$ and $t^*ts=s$. It follows that $\ell^2([s])$ is an invariant subspace of $\ell^2(S^F)$ under the representation $V$.

If $t^*ts=s$, then $[I_{t^*}]=[\sigma(t)^{-1}I_t]\leq [I_s]$. It follows that for any fixed form of $t$ and $s$ we have $I_{t^*}\subset G_s$, i.e. $t\in L_s$. This implies $\sigma(t)^{-1}\in G_s$ and
$$ts=(\sigma(t),[I_t])(\sigma(s),[I_s])=(\sigma(ts),[I_t\cup \sigma(t)I_s])=(\sigma(ts),\sigma(t)[I_s]. $$
Hence, if $t\in L_s$, the product $ts$ depends only on $\sigma(t)$.

And due to \autoref{thm:modelsf} for any $g\in G_s^{-1}$ we have $[I_s]=[I_s\cup\{g^{-1}\}]$ and there exists an element $t\in S^F$ such that $[I_{t^*}]\leq [I_s] $ and $\sigma(t)=g$. 

Therefore, $\sigma$ gives a bijection between $[s]\subset S^F$ and $G_s^{-1}\sigma(s)\subset G$. Denote by $\alpha_s$ the corresponding isomorphism between $\ell^2([s])$ and $\ell^2(G_s^{-1}\sigma(s))$. 

Repeating calculations as above, for any $r\in S^F$ and $ts\in [s]$, $g=\sigma(t)$ we have for any $I_{r^*}$ corresponding to a fixed form of $r$ that
$$r^*r(ts)=ts  \Longleftrightarrow I_{r^*}\subset g\sigma(s)^{-1}G_s \Longleftrightarrow r\in L_{s,g}.$$

Consequently, defining the representation $\pi_s$ by formula (\ref{lsg}) we obtain for any $t,x\in S^F$:
$$ \alpha_s(V_t\delta_x)=\pi_s(t)\delta_{\sigma(x)}.$$
Thus, $\alpha_s$ is an isomorphism between the restriction of $V$  onto $\ell^2([s])$ and $\pi_s$.
\end{proof}
\end{pgr}

\begin{pgr}\label{pgr:repsum}
	
A similar result holds for the semigroup $S^*$. We use $\sigma$ to denote the homomorphism $S^*\to G$. Recall the equivalence on $S^F$ defining $S^*$. For any $g\in G, a\in S$:
$$v_a^*v_a\sim 1,\ \{1,g,ga \}\sim' \{1,ga\}.$$
For any $s\in S^*$ define  
$$P_s=\{t\in S^*\colon I_{t^*}\sim' I_{s^*} \}.$$ 
Denote by $R$ the set of all sets $P_s\subset S^*$.
Also define
$$D_s=I_s\cdot S^{-1}=\{ ga^{-1}\colon g\in I_s, a\in S\}\subset G$$

\begin{thm}	\label{thm:sums} 
	Let  $G$ be a group generated by a semigroup $S$. The left regular representation $V$ of $S^*$ is isomorphic to $\bigoplus_{P_s\in R}\pi_{s}$, where $\pi_{s}$ are unital *-representations of $S^*$ on $\ell^2(D_s^{-1}\sigma(s))\subset \ell^2(G)$ defined by
	\begin{equation}\label{rs}
	\pi_s(t)\delta_g= \left\{
	\begin{array}{cc}
	\delta_{\sigma(t)g}  &\mbox{ if } I_{t^*}\subset g\sigma(s)^{-1}D_s,\\ 0 &\mbox{ otherwise }
	\end{array} \right. 
	\end{equation}    
\end{thm}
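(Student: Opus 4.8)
The plan is to run the proof of \autoref{thm:sum} line by line, with the model of $S^*$ from \autoref{cor:modelss} in place of the model of $S^F$, the sets $P_s$ in place of the classes $[s]$, and $D_s=I_s\cdot S^{-1}$ in place of $G_s$. The backbone is the general inverse-semigroup fact recalled at the start of that proof: $s_1^*s_1=s_2^*s_2$ iff some $t$ satisfies $ts_1=s_2$ and $t^*ts_1=s_1$, so that each subspace $\ell^2(\{p\in S^*:p^*p=s^*s\})$ is invariant under $V$. Writing $s^*s=(1,[I_{s^*}])$ in the model (with classes now taken in $E'$), this invariant subspace is exactly $\ell^2(P_s)$; since the $P_s$ partition $S^*$, we obtain $\ell^2(S^*)=\bigoplus_{P_s\in R}\ell^2(P_s)$ with each summand reducing $V$.

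First I would fix $s$ and build the unitary $\alpha_s\colon\ell^2(P_s)\to\ell^2(D_s^{-1}\sigma(s))$. For $t\in S^*$ the relation $t^*ts=s$ amounts to $[I_{t^*}]\leq[I_s]$ in $E'$, from which $ts=(\sigma(ts),\sigma(t)[I_s])$ depends only on $\sigma(t)$, exactly as in \autoref{thm:sum}. The surjectivity half of \autoref{cor:modelss} --- every $g\in D_s^{-1}$ equals $\sigma(t)$ for some admissible $t$ --- then makes $\sigma$ a bijection $P_s\to D_s^{-1}\sigma(s)$, and I set $\alpha_s(\delta_p)=\delta_{\sigma(p)}$. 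Next I would verify the intertwining: for $r\in S^*$ and $p\in P_s$, the identity $r^*rp=p$ translates into $I_{r^*}\subset\sigma(p)\sigma(s)^{-1}D_s$, which is precisely the condition appearing in (\ref{rs}) with basis index $g=\sigma(p)$. Combined with $V_r\delta_p=\delta_{rp}$ in the nonzero case and $\sigma(rp)=\sigma(r)\sigma(p)$, this yields $\alpha_s(V_r\delta_p)=\pi_s(r)\delta_{\sigma(p)}$, so $\alpha_s$ intertwines $V|_{\ell^2(P_s)}$ with $\pi_s$; summing over $P_s\in R$ gives $V\cong\bigoplus_{P_s\in R}\pi_s$.

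The one substantively new point, and the step I expect to be the main obstacle, is justifying that $D_s=I_s\cdot S^{-1}$ is the correct replacement for $G_s$ --- that is, that the order $[A]\leq[I_s]$ on $E'$ is detected by containment $A\subset D_s$ rather than by the two-sided sandwiching $g_1\leq g\leq g_2$ governing $G_s$ on $E$. This is exactly where the defining relation $v_a^*v_a\sim 1$, equivalently $\{1,g,ga\}\sim'\{1,ga\}$, does its work: it collapses the upward half of the sandwich, leaving only the downward closure under right multiplication by $S^{-1}$. The delicate task will be to confirm that this collapse is compatible with the full-set condition and descends correctly to $E'$, so that $\sigma$ really is a bijection $P_s\to D_s^{-1}\sigma(s)$; once that is settled, the remaining computations are word-for-word those of \autoref{thm:sum}.
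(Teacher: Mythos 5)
Your proposal is correct and follows essentially the same route as the paper's proof: the paper likewise reduces everything to the proof of Theorem \ref{thm:sum}, isolating as the only new ingredient the fact that under $\sim'$ the quasi-order becomes $[A]\leq[B]$ iff every $g\in A$ has $ga\in B$ for some $a\in S$, so that $[I_{t^*}]\leq[I_s]$ is equivalent to $I_{t^*}\subset D_s=I_sS^{-1}$, after which $\sigma$ gives the bijection $P_s\to D_s^{-1}\sigma(s)$ and the intertwining condition $I_{r^*}\subset g\sigma(s)^{-1}D_s$ exactly as you describe. The point you flag as the main obstacle is indeed the one substantive change, and the paper disposes of it in the same way you propose.
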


\begin{proof}
	
The proof is similar to \autoref{thm:sum}, the difference is in the quasi order $\leq$ on $exp(G)$ under the equivalence $\sim'$. For $A,B\in exp(G)$, $[A]\leq [B]$ if and only if for any $g\in A$ there exists $a\in S$ such that $ga\in B$.

 Fix an element $s\in S^*$ and $I_s$ for one of its forms. Then for any $t\in S^*$ we have $[I_{t^*}]\leq [I_s]$ if and only if $I_{t^*}\subset D_s$ for any form of $t$. This implies that $\sigma(t)\in D_s^{-1}$ and $\sigma(ts)\in D_s^{-1}\sigma(s)$. As in \autoref{thm:sum}, when $t$ satisfies $t^*ts=s$, the product $ts$ depends only on $\sigma(t)$. Consequently, 
 $$P_s=\{ts\colon t^*ts=s\}\subset S^*$$
 is bijective to $D_s^{-1}\sigma(s)\subset G$, with bijection implemented by $\sigma$. 
 
 Let $g\in D_s^{-1}\sigma(s)$. Then we can find $t\in S^*$ such that $t^*ts=s$ and $\sigma(t)=g\sigma(s)^{-1}$. For any $r\in S^*$ we have $r^*rts=ts$ if and only if 
 $$I_{r^*}\subset D_{ts}=I_{ts}S^{-1}=\sigma(t)D_{s}=g\sigma(s)^{-1}D_s.$$
 Hence, the statement of Theorem follows.\end{proof}

\end{pgr}

\section{The full C*-algebras of the universal inverse semigroups}

\begin{pgr} Let $P$ be an inverse semigroup. Consider the space $\ell^1(P)$, define product and involution:
$$(\sum_{s\in P}a_s\delta_s)(\sum_{t\in P}b_t\delta_t)=(\sum_{s,t\in P}a_sb_t\delta_{st}) $$
$$(\sum_{s\in P}a_s\delta_s)^*=\sum_{s\in P}\overline{a_s}\delta_{s^*}$$
Then $\ell^1(P)$ is a Banach *-algebra. Any *-representation of $P$ extends to a *-representation of $\ell^1(P)$ and the converse is true. \emph{The full semigroup C*-algebra} $C^*(P)$ of $P$ is the completion of $\ell^1(P)$ under the supremum norm over all *-representations of $P$ (\cite{Paterson}).

 \end{pgr}
 
\begin{pgr}\label{pgr:fullcancel}
For a left cancellative semigroup $S$  let $\mathcal{J}$ be the set of all constructible right ideals in $S$ (see \autoref{pgr:hull}). Define an inverse semigroup $W$ generated by isometries $w_s$, $s\in S$ and projections $e_X$, $X\in \mathcal{J}$  satisfying the following relations for all $s,t\in S$, $X,Y\in\mathcal{J}$:
$$ w_{st}=w_sw_t,\ w_se_Xw_s^*=e_{sX}, $$
$$e_S=1,\ e_\emptyset=0,\ e_{X\cap Y}=e_X e_Y$$
The universal C*-algebra generated by $W$ is \emph{the full semigroup C*-algebra} of $S$, denoted $C^*(S)$ (\cite{Li}). 
\end{pgr}

\begin{lma}\label{lma:surj} There exists a surjective *-homomorphism $C^*(S^*)\to  C^*(S)$.
\end{lma}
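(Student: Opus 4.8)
The plan is to produce the map from a single unital *-representation of $S^*$ built out of the defining generators of $C^*(S)$, and then invoke the universal property of the full C*-algebra. Concretely, realize $C^*(S)$ faithfully on a Hilbert space $H$, so that the generating isometries $w_s$ of the inverse semigroup $W$ from \autoref{pgr:fullcancel} become honest isometries on $H$. Since the idempotents of $W$ are the mutually commuting range projections $e_X$, the operators $\{w_s,w_s^* : s\in S\}$ generate an inverse semigroup; equivalently, $s\mapsto w_s$ is an isometric inverse representation of $S$ in the sense used before \autoref{reprs}. By that lemma it induces a unital *-representation $\tilde\pi$ of $S^*$ on $H$ with $\tilde\pi(v_s)=w_s$, which by the universal property of $C^*(S^*)$ extends to a *-homomorphism $\Phi\colon C^*(S^*)\to B(H)$.

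For surjectivity I would identify the image of $\Phi$ with $C^*(S)$ itself. The range of $\Phi$ is the C*-algebra generated by the $w_s$; but each idempotent generator $e_X$ is the range (or source) projection $w\,w^*$ of a suitable word $w$ in the $w_s$ and $w_s^*$, the constructible ideals $X\in\mathcal J$ being exactly the domains and images of such words (cf. \autoref{pgr:hull}). Hence the algebra generated by the $w_s$ already contains every $e_X$ and equals $C^*(S)$, so $\Phi$ corestricts to the desired surjection $C^*(S^*)\to C^*(S)$. Equivalently, one may phrase the whole argument at the semigroup level: $v_s\mapsto w_s$ respects the relations defining $S^*$ (the $w_s$ are isometries and satisfy $w_{st}=w_sw_t$), so by freeness of $F(S)$ it yields a surjective homomorphism $\rho\colon S^*\to W$ (this is the map $\gamma$ of \autoref{lma:allfree} under the identification $W\cong I_l(S)$), and a surjective homomorphism of inverse semigroups always induces a surjective *-homomorphism of full C*-algebras, since every *-representation of $W$ pulls back along $\rho$ to one of $S^*$.

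The one point requiring care — and the main potential obstacle — is the zero element. The semigroup $S^*$ has no zero, whereas $W$ does (because $e_\emptyset=0$), so $\rho$ sends the non-zero elements of $S^*$ that index empty constructible ideals to the zero of $W$. This is harmless precisely because a *-representation of an inverse semigroup is required to kill the zero element: composing such a representation of $W$ with $\rho$ still gives a bona fide *-representation of $S^*$, which is exactly what guarantees that the induced map of $\ell^1$-algebras is norm-decreasing for the universal norms and hence extends to $\Phi$. With this checked, surjectivity is immediate from the fact that the generators $w_s$ of $C^*(S)$ lie in the image.
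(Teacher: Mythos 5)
Your proposal is correct and takes essentially the same route as the paper: the paper likewise observes (citing Corollary 2.10 of Li) that $W$ is generated by the isometries $w_s$ with each $e_X$ a range projection of a word in them, so that $W$ is a quotient of $S^*$ by extra relations, and then lets the universal property of $C^*(S^*)$ produce the surjection onto $C^*(S)=C^*(W)$. Your explicit treatment of the zero element $e_\emptyset$ is a point the paper leaves implicit (via its standing convention that representations of inverse semigroups kill the zero), but it does not change the argument.
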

\begin{proof} 
Clearly, $W$ is an inverse semigroup. By Corollary 2.10 in \cite{Li}, $W$ is generated by isometries $\{w_s|\ s\in S\}$, i.e. the relations defining $W$ can be formulated in terms of $w_s$. Namely, for $X=a_{n}^{-1}(a_{n-1}... a_{2}^{-1}(a_1 S))$ we have 
$$e_X=w_{a_{n}}^*w_{a_{n-1}}... w_{a_{2}}^*w_{a_1}(w_{a_{n}}^*w_{a_{n-1}}... w_{a_{2}}^*w_{a_1})^*$$
Then $W$ is defined by requirements that idempotent monomials are equal when corresponding ideals in $S$ are equal as sets, and that the product of ideals is respected.  Imposing the same relations on $S^*$ denoted $R_5$, we obtain that the quotient $S^*/R_5$ equals $W$. Then this semigroup *-homomorphism extends to a surjective *-homomorphism  $C^*(S^*)\to  C^*(S)$.
 \end{proof}

\begin{rmk}\label{rmk:r5} In terms of \autoref{pgr:hull} the relations $R_5$ consist of $R_2$ and the first half of $R_3$. Therefore, the *-homomorphism $\gamma\colon S^*\to I_l(S)$ in \autoref{lma:allfree} factors through $S^*\to W$ defined above. 
\end{rmk}

\begin{pgr} As pointed out in the Section 2.5 of \cite{Li},  the construction of the full semigroup C*-algebra is not functorial, i.e. not every semigroup morphism of two cancellative semigroups extends to *-homomorphism of their C*-algebras. This is demonstrated further in \autoref{eg:free}. The reason lies in the construction of the inverse semigroup $W$ in the definition of $C^*(S)$. As we will see now, the map $S\to C^*(S^*)$, on the contrary, is functorial.

\begin{prp}\label{prp:functor} Let $\phi\colon S\to T$ be a semigroup morphism between two left cancellative semigroups. Then $\phi$ extends to a *-homomorphism $\tilde{\phi}\colon C^*(S^*)\to C^*(T^*)$.
\end{prp}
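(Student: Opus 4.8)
The plan is to build the extension $\tilde{\phi}$ at the level of the universal inverse semigroups, and then pass to the full C*-algebras using the fact that $C^*(-)$ is functorial for *-homomorphisms of inverse semigroups. Concretely, I would first produce a homomorphism of inverse semigroups $\phi^*\colon S^*\to T^*$ extending $\phi$, and then invoke the universal property of $C^*(S^*)$: every *-representation of $T^*$ pulls back along $\phi^*$ to a *-representation of $S^*$, so the supremum norm on $\ell^1(S^*)$ dominates the one coming from $T^*$, giving a *-homomorphism $C^*(S^*)\to C^*(T^*)$ on the dense $\ell^1$-subalgebras that extends by continuity.

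The key step is therefore the construction of $\phi^*\colon S^*\to T^*$. I would define it on generators by $\phi^*(v_a)=v_{\phi(a)}$ and $\phi^*(v_a^*)=v_{\phi(a)}^*$ for $a\in S$, and extend multiplicatively to words in $S^*$. To see this is well-defined I would trace through the presentation of $S^*$ recorded in \autoref{lma:allfree}: $S^*$ is the quotient of the free inverse semigroup $F(S)$ by the congruence generated by $R_1\cup R_4$, where $R_1$ is the family of relations $v_a^*v_a\sim 1$ and $R_4$ consists of all relations in $S$. The assignment $v_a\mapsto v_{\phi(a)}$ certainly respects the free-inverse-semigroup structure (Vagner's theorem \autoref{vagner} guarantees that sending generators to partial isometries with commuting idempotents determines a unique *-homomorphism), so it remains only to check that the defining relations of $S^*$ are sent to relations holding in $T^*$. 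The relations $R_1$ map to $v_{\phi(a)}^*v_{\phi(a)}\sim 1$, which hold in $T^*$ since every generator of $T^*$ is an isometry; and the relations $R_4$, being the semigroup relations $a\cdot b=c$ in $S$, map under $\phi$ to $\phi(a)\cdot\phi(b)=\phi(c)$ in $T$ precisely because $\phi$ is a semigroup morphism, hence to valid relations $v_{\phi(a)}v_{\phi(b)}\sim v_{\phi(a)\phi(b)}$ in $T^*$. Thus $\phi^*$ descends to a well-defined *-homomorphism of inverse semigroups.

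The main obstacle I anticipate is purely bookkeeping rather than conceptual: verifying that $\phi^*$ is genuinely well-defined on the quotient, i.e. that it kills the full congruence generated by $R_1\cup R_4$ and not merely the generating relations. Since a semigroup congruence generated by a set of relations is obtained by closing under products and the *-operation, and $\phi^*$ is by construction multiplicative and *-preserving, this closure is automatic once the generating relations are checked; so the obstacle dissolves. One should note the contrast with the non-functoriality of $S\mapsto C^*(S)$ emphasized in \autoref{lma:surj} and \autoref{rmk:r5}: there the extra relations $R_5$ (equivalently $R_2$ together with half of $R_3$) force idempotents to coincide whenever the corresponding constructible right ideals agree \emph{as subsets of $S$}, and a morphism $\phi$ need not carry the ideal structure of $S$ faithfully into that of $T$, so those relations are not preserved. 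Working with $S^*$ avoids imposing $R_5$, which is exactly why functoriality survives; I would close by remarking that $\tilde{\phi}$ sends the generator $v_a\in C^*(S^*)$ to $v_{\phi(a)}\in C^*(T^*)$, so it indeed extends $\phi$.
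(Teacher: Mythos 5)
Your proof is correct and follows essentially the same route as the paper: produce a *-homomorphism $S^*\to T^*$ extending $\phi$ and then apply the universal property of the full inverse semigroup C*-algebra. The only difference is that the paper cites Theorem 4.8 of Schein for the functoriality of $S\mapsto S^*$, whereas you verify it directly from the presentation of $S^*$ as $F(S)$ modulo the congruence generated by $R_1\cup R_4$ (note that the extension from generators to $F(S)$ is justified by the universal property of the free inverse semigroup rather than by Vagner's theorem, but this is a minor point and the check of the relations is exactly right).
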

\begin{proof} Due to Theorem 4.8 in \cite{Schein2}, the imbedding $S\hookrightarrow S^*$ is functorial, so $\phi$ induces a *-homomorphism $S^*\to T^*$. By the universal property of inverse semigroup C*-algebras we obtain the required *-homomorphism between the full C*-algebras.
\end{proof}
\end{pgr}

\begin{thm} For any left cancellative semigroup $S$ we have the following commutative diagram in which every line is a short exact sequence and vertical maps are surjective *-homomorphism.
\begin{equation}\label{diagram} 
\begin{CD} 0 @>>> J @>>>C^*(S^*) @>>> C^*(S) @>>> 0\\ 
    @. @.   @VVV                         @VVV               @. \\
0 @>>> J_r @>>>C^*_r(S^*) @>>> C^*_r(S) @>>> 0\end{CD}
\end{equation}
Here $J$ is a closed ideal in $C^*(S^*)$ generated by $\{ x-y| \ x,y\in S^*,\ x  \approx_{R_5} y \}$, and $J_r$ is defined in \autoref{lma:short}.  \end{thm}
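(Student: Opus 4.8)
The plan is to assemble the diagram from its two rows and then glue them by checking commutativity on the generators $v_s$, $s\in S$. The bottom row requires no new work: it is exactly the short exact sequence of \autoref{lma:short}, with the quotient map $C^*_r(S^*)\to C^*_r(S)$ realized, via \autoref{lma:restr}, as the restriction of the left regular representation $V$ of $S^*$ to the invariant subspace $\ell^2(S)\subset\ell^2(S^*)$; on generators it is $V(v_s)\mapsto V(v_s)|_{\ell^2(S)}=V'(v_s)=V_s$. For the top row I would begin from the surjection $C^*(S^*)\to C^*(S)$ of \autoref{lma:surj}, induced by the quotient of inverse semigroups $\phi\colon S^*\to W=S^*/R_5$, recalling that $C^*(S)=C^*(W)$ by \autoref{pgr:fullcancel}. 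Everything then reduces to identifying $\ker\big(C^*(S^*)\to C^*(S)\big)$ with $J$.

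For that identification I would use the universal property of the full C*-algebra of an inverse semigroup. The general statement is: for a surjective homomorphism $\phi\colon P\to Q$ of inverse semigroups, the induced *-homomorphism $C^*(P)\to C^*(Q)$ is onto with kernel the closed ideal $J_\phi$ generated by $\{\delta_x-\delta_y\colon \phi(x)=\phi(y)\}$. I would prove this by matching representations. A *-representation of $C^*(P)/J_\phi$ is precisely a *-representation $\pi$ of $P$ with $\pi(x)=\pi(y)$ whenever $\phi(x)=\phi(y)$, that is, one factoring as $\pi=\bar\pi\circ\phi$ for a *-representation $\bar\pi$ of $Q$; conversely every *-representation of $Q$ produces such a $\pi$. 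Hence $C^*(P)/J_\phi$ and $C^*(Q)$ are universal for the same representations, and the canonical surjection $C^*(P)/J_\phi\to C^*(Q)$ admits a two-sided inverse defined on generators, so it is an isomorphism. Taking $P=S^*$ and $Q=W$ yields the top row with kernel $J$.

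The vertical arrows are the canonical full-to-reduced quotients. The middle map $C^*(S^*)\to C^*_r(S^*)$ is the quotient induced by the left regular representation $V$ of $S^*$, surjective by definition and sending $v_s\mapsto V(v_s)$. For the right map I would note that the left regular representation of $S$ extends to a *-representation of the inverse semigroup $W$ on $\ell^2(S)$, sending $w_s\mapsto V_s$ and each $e_X$ to the projection onto $\ell^2(X)$; these respect all the defining relations of \autoref{pgr:fullcancel}, so by universality the representation factors through $C^*(S)=C^*(W)$ with image $C^*_r(S)$, giving a surjection $w_s\mapsto V_s$. Commutativity of the right square is then checked on generators: along $C^*(S)$ one has $v_s\mapsto w_s\mapsto V_s$, while along $C^*_r(S^*)$ one has $v_s\mapsto V(v_s)\mapsto V(v_s)|_{\ell^2(S)}=V_s$. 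As the $v_s$ generate $C^*(S^*)$ and all four maps are *-homomorphisms, the square commutes, and the induced map $J\to J_r$ is then automatic.

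The step I expect to be the main obstacle is the treatment of the zero element. The congruence $R_5$ (which contains $R_2$) collapses all empty-ideal monomials into a single class, so $W$ acquires a zero $0_W$, and $C^*(S)=C^*(W)$ is formed with Li's convention $e_\emptyset=0$, i.e.\ $\bar\pi(0_W)=0$ for every admissible $\bar\pi$. Since $S^*$ itself has no zero, a *-representation of $S^*$ that is merely constant on $R_5$-classes need not send the empty-ideal class to $0$; for instance the representation collapsing all $v_s$ to the identity operator respects $R_5$ yet sends every empty-ideal element to $1$. Consequently the naive universal-property argument produces $\ker\big(C^*(S^*)\to C^*(S)\big)$ as the ideal generated by the $R_5$-differences \emph{together with} the elements $\delta_z$ for empty-ideal $z$. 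The crux is therefore to show that the stated $J$ already absorbs these $\delta_z$ — equivalently, that the convention $0_W\mapsto 0$ is forced by the relations defining $J$ — or else to read the generating set of $J$ as including them. When $S$ has no empty constructible right ideal (for example when $S$ is left Ore) the difficulty evaporates and $J$ is exactly the difference ideal; in general, reconciling the generator description of $J$ with the zero convention of $C^*(S)$ is where the real work lies.
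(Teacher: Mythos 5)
Your proposal follows the same route as the paper's (very terse) proof: the bottom row is \autoref{lma:short}, the top row is \autoref{lma:surj}, the vertical arrows are the regular representations, and commutativity is checked on the generators $v_s$ because every map in sight is induced by a homomorphism of the underlying inverse semigroups. Your universal-property identification of $\ker\bigl(C^*(P)\to C^*(Q)\bigr)$ for a quotient $\phi\colon P\to Q$ of inverse semigroups is a correct and welcome expansion of what the paper leaves implicit.

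The obstacle you flag at the end is not a defect of your argument but a genuine imprecision in the statement being proved, and the paper's proof does not address it. Since $R_5$ contains $R_2$, the quotient $W=S^*/R_5$ has a zero, and the paper's convention for $C^*(W)=C^*(S)$ forces $0_W\mapsto 0$; meanwhile $S^*$ has no zero, and the trivial representation $v_s\mapsto 1$ of $S^*$ annihilates every difference $x-y$ with $x\approx_{R_5}y$ while sending an empty-ideal monomial such as $v_{a_1}v_{a_1}^*v_{a_2}v_{a_2}^*$ (nonzero in ${\mathbb{F}_n^+}^*$, but equal to $e_\emptyset=0$ in $C^*(\mathbb{F}_n^+)$ by \autoref{eg:free}) to $1$. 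Hence, whenever $\emptyset\in\mathcal{J}$, the ideal generated by the differences alone is strictly smaller than $\ker\bigl(C^*(S^*)\to C^*(S)\bigr)$, and the generating set of $J$ must be read as also containing the monomials whose associated constructible ideal is empty (equivalently, one adjoins the relations $z\approx_{R_5}0$). With that reading your proof is complete; without it, the top row as literally stated fails to be exact for, e.g., the free monoid. You correctly isolated where the real work lies, which is more than the paper's proof does.
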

\begin{proof}
The upper short exact sequence is given by \autoref{lma:surj}, the lower one was proved in \autoref{lma:short}. The vertical maps are the left regular representations of $S^*$ and $S$ respectively. The commutativity of the diagram then follows from the fact that all these *-homomorphisms are induced by *-homomorphisms of the corresponding inverse semigroups.\end{proof}

\begin{rmk} Due to \autoref{rmk:r5}, $C^*_r(S)$ is a C*-algebra generated by a *-representation of $W$ (in the same way as $S^*$ and $I_l(S)$) on $\ell^2(S)$, which may be not the same as the left regular representation of $W$ and may be not faithful. Therefore in general $C^*_r(S)$ is not isomorphic to $C^*_r(W)$.
\end{rmk}

\begin{cor} Using the results of \cite{Norling}, one has the following commutative diagram where each map is a surjective *-homomorphism:
$$\begin{CD} C^*(S^*) @>>> C^*(S) @>>> C^*(I_l(S))\\ 
 @VVV              @VVV                         @VVV \\
C^*_r(S^*) @>>> C^*_r(S) @<<< C^*_r(I_l(S))\end{CD}
$$
\end{cor}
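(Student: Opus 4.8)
The plan is to realize every arrow in the diagram as either the canonical passage from a full to a reduced C*-algebra, or as the functorial image of an inverse semigroup homomorphism, and then to verify commutativity on the generators, where everything collapses to the left regular representation of $S$ on $\ell^2(S)$.

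First I would fix the six maps. The top row is obtained by applying the full C*-algebra functor to the chain of inverse semigroup homomorphisms $S^*\to W\to I_l(S)$: the first map is \autoref{lma:surj} (recall that $C^*(S)=C^*(W)$ by definition), while the factorization of $\gamma$ through $W$ recorded in \autoref{rmk:r5} supplies $W\to I_l(S)$, and surjectivity of $C^*(S)\to C^*(I_l(S))$ is the result of \cite{Norling}. The three vertical arrows are the canonical full-to-reduced quotients given by left regular representations: for $S^*$ and $I_l(S)$ these are the standard maps $C^*(P)\to C^*_r(P)$ of \cite{Paterson}, whereas the middle arrow $C^*(S)\to C^*_r(S)$ is the left regular representation of $S$, which by \autoref{rmk:r5} factors through the representation of $W$ on $\ell^2(S)$. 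On the bottom row, $C^*_r(S^*)\to C^*_r(S)$ is the restriction of the left regular representation of $S^*$ to $\ell^2(S)\subset\ell^2(S^*)$ from \autoref{lma:short}, and $C^*_r(I_l(S))\to C^*_r(S)$ is Norling's description of $C^*_r(S)$ as a quotient of $C^*_r(I_l(S))$. All six maps are surjective by the cited results.

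For commutativity I would argue square by square, evaluating on the isometric generators. In the left square both composites send $v_p$ to the operator $\delta_q\mapsto\delta_{pq}$ on $\ell^2(S)$: the top-then-down path sends $v_p\mapsto w_p\mapsto V_p$, and the down-then-bottom path sends $v_p\mapsto V(v_p)$ on $\ell^2(S^*)$ and then restricts to $\ell^2(S)$, which by \autoref{lma:restr} is exactly the left regular representation of $S$. Since each algebra is generated by these images and the maps are \emph{*}-homomorphisms, agreement on generators forces agreement everywhere. The right square is treated the same way: a generator $w_p$ maps along the top to $\lambda_p\in I_l(S)$, then down to $V(\lambda_p)$ on $\ell^2(I_l(S))$, and finally Norling's quotient sends this to the action of $\lambda_p$ on $\ell^2(S)$, namely $V_p$, which equals the image of $w_p$ under the middle vertical map.

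The main obstacle I anticipate is the right square, for two reasons. The arrow $C^*_r(I_l(S))\to C^*_r(S)$ points leftward, so commutativity must be phrased as the assertion that the composite $C^*(S)\to C^*(I_l(S))\to C^*_r(I_l(S))\to C^*_r(S)$ coincides with the middle vertical map. More seriously, the algebra $C^*_r(S)$ is generated not by the left regular representation of $I_l(S)$ on $\ell^2(I_l(S))$ but by the concrete, and generally non-faithful, representation of $I_l(S)$ by partial bijections on $\ell^2(S)$. Verifying that Norling's quotient map intertwines these two representations on the generators $\lambda_p$, and hence delivers precisely the left regular representation of $S$, is the one place where I would invoke \cite{Norling} in an essential way rather than relying on formal functoriality.
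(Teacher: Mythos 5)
Your proposal is correct and follows exactly the route the paper intends: the paper offers no written proof, merely asserting the diagram as a combination of the preceding theorem (which gives the left square, i.e.\ diagram (\ref{diagram})) with Norling's surjections $C^*(S)\to C^*(I_l(S))$ and $C^*_r(I_l(S))\to C^*_r(S)$, plus the remark that the last arrow is reversed because $C^*_r(S)$ is generated by a subrepresentation of the left regular representation of $I_l(S)$. Your identification of all six arrows, the generator-by-generator check of commutativity, and your observation that the right square must be read as the equality of the composite $C^*(S)\to C^*(I_l(S))\to C^*_r(I_l(S))\to C^*_r(S)$ with the middle vertical map supply precisely the details the paper leaves implicit.
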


\begin{rmk} Note that the last arrow is reversed due to the fact that $C^*_r(S)$ is generated by a subrepresentation of the left regular representation of $S^*$ (or $I_l(S)$). Note also that for  inverse semigroups we consider only 0-preserving representations, so that  in our notation $C^*(I_l(S))$ is the same as $C^*_0(I_l(S))$ used in \cite{Norling}. \end{rmk}

\begin{pgr}
We denote $G=G(S^*)$ the maximal group homomorphic image of $S^*$ defined in \autoref{pgr:invsem}. Then the semigroup homomorphism $\sigma\colon S^*\to G$ extends to a surjective *-homomorphism $C^*(S^*)\to C^*(G)$. This surjection is a quotient homomorphism by a closed ideal $I$ in $C^*(S^*)$ generated by the set $\{1-p| \ p\in S^*,  p \mbox{ is an idempotent } \}$. And using the definition of $S^*$ one can see that this ideal is generated by a smaller set $\{1-v_sv_s^*|\ s\in S\}$. By Proposition 1.4 of \cite{Duncan} with a proof in Proposition 4.1 of \cite{Paterson2}, the *-homomorphism $P\to G(P)$ extends to a surjective *-homomorphism of the reduced C*-algebras. Therefore we obtain the following statement.\end{pgr}

\begin{prp}
Let $S$ be a left cancellative semigroup and $G$ the maximal group homomorphic image of $S^*$. Then the following  diagram is commutative, every line is a short exact sequence and vertical maps are surjective *-homomorphism.
\begin{equation}\label{diagram2} 
\begin{CD} 0 @>>> I @>>>C^*(S^*) @>>> C^*(G) @>>> 0\\ 
    @. @VVV   @VVV                         @VVV               @. \\
0 @>>> I_r @>>>C^*_r(S^*) @>>> C^*_r(G) @>>> 0\end{CD}
\end{equation}
Here $I$ is a closed ideal in $C^*(S^*)$ generated by the set $\{1-v_sv_s^*|\ s\in S\}$ and $I_r$ is its image under the left regular representation of $S^*$.\end{prp}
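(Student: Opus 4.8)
The plan is to assemble the diagram from ingredients that are already essentially in place, and then to isolate exactness of the bottom row as the single point requiring genuine work. Write $\pi\colon C^*(S^*)\to C^*(G)$ for the surjection of the preceding paragraph, so that $\ker\pi=I$; write $q\colon C^*(S^*)\to C^*_r(S^*)$ and $\rho\colon C^*(G)\to C^*_r(G)$ for the canonical quotient maps from the full to the reduced C*-algebras (the middle and right vertical arrows); and write $\pi_r\colon C^*_r(S^*)\to C^*_r(G)$ for the surjection supplied by Proposition 1.4 of \cite{Duncan}. The maps $q$ and $\rho$ are surjective by the definition of a reduced C*-algebra as a quotient of the full one, $\pi$ is surjective by the preceding paragraph, and $\pi_r$ is surjective by the cited result. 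The left vertical arrow is the corestriction $q|_I\colon I\to I_r$; it is onto $I_r$ because $I_r$ is by definition the image $q(I)$, which is a closed ideal of $C^*_r(S^*)$ as the image of a closed ideal under a surjective $*$-homomorphism.

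First I would verify commutativity. Every arrow is induced by the homomorphism $\sigma\colon S^*\to G$ (or is a canonical full-to-reduced quotient, which is the identity on the underlying dense $*$-algebra), so it suffices to compare the two composites in each square on the generators $v_s,v_s^*$. For the right-hand square both $\rho\circ\pi$ and $\pi_r\circ q$ send $v_s$ to $\lambda_{\sigma(v_s)}\in C^*_r(G)$, so by linearity and continuity the square commutes on all of $C^*(S^*)$; the left-hand square commutes trivially, since both routes from $I$ into $C^*_r(S^*)$ equal $q|_I$ followed by the inclusion.

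Exactness of the top row is precisely the content of the preceding paragraph, so it remains to prove exactness of the bottom row, that is $\ker\pi_r=I_r=q(I)$. One inclusion is immediate: for $x\in I$ we have $\pi_r(q(x))=\rho(\pi(x))=0$, so $I_r\subseteq\ker\pi_r$; equivalently, each generator $1-q(v_sv_s^*)$ of $I_r$ lies in $\ker\pi_r$, since $\pi_r(q(v_sv_s^*))=\lambda_{\sigma(v_s)}\lambda_{\sigma(v_s)}^{*}=1$. The reverse inclusion $\ker\pi_r\subseteq I_r$ is the main obstacle, and I would attack it by showing that the induced surjection $\bar\pi_r\colon C^*_r(S^*)/I_r\to C^*_r(G)$ is injective. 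In the quotient every range projection $q(v_sv_s^*)$ becomes the identity, so, together with $q(v_s^*v_s)=1$, the images of the $q(v_s)$ become unitaries satisfying the relations of $S^*$; in particular the commutative diagonal subalgebra, i.e.\ the closed linear span of $\{q(e)\colon e\in E(S^*)\}$, collapses onto $\mathbb{C}1$, on which $\bar\pi_r$ is manifestly injective.

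To upgrade this to injectivity of $\bar\pi_r$ I would use faithful conditional expectations: $C^*_r(S^*)$ carries the canonical faithful conditional expectation onto its diagonal subalgebra, and $C^*_r(G)$ carries the canonical faithful expectation $E_G$ onto $\mathbb{C}1$ determined by the vector $\delta_e$. The strategy is to show these are intertwined by $\pi_r$, whence faithfulness of $E_G$ together with injectivity of $\bar\pi_r$ on the collapsed diagonal forces $\bar\pi_r$ to be faithful. The delicate heart of the argument is exactly this intertwining: when $S$ is not embeddable in a group the semigroup $S^*$ need not be E-unitary, so there may exist non-idempotent $t\in S^*$ with $\sigma(t)=1$ whose left regular representation $V_t$ has a nonzero diagonal part, and one must check that every such diagonal contribution is carried by $\pi_r$ precisely onto the scalar $E_G(\lambda_1)=1$ and that nothing outside $\mathbb{C}1$ survives. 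Establishing this compatibility is where the real work lies; once it is in hand, the diagram is complete.
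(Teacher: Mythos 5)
Your assembly of the diagram, the commutativity check on generators, the identification of the top row with the quotient by $I$, and the easy inclusion $I_r\subseteq\ker\pi_r$ all match what the paper actually supplies: its entire argument is the paragraph preceding the Proposition, which records the surjection $C^*(S^*)\to C^*(G)$ with kernel $I$ generated by $\{1-v_sv_s^*\}$ and cites Duncan--Paterson for the existence of the surjection $\pi_r\colon C^*_r(S^*)\to C^*_r(G)$. Up to that point your write-up is correct and, if anything, more explicit than the original.

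The genuine gap is the one you flag yourself and then leave open: the reverse inclusion $\ker\pi_r\subseteq I_r$, i.e.\ exactness of the bottom row at $C^*_r(S^*)$. Since $q$ is surjective one only gets $\ker\pi_r=q\bigl(\pi^{-1}(\ker\rho)\bigr)\supseteq q(I)=I_r$ for free, and nothing in the cited results closes this to an equality. Your conditional-expectation strategy is not yet a proof, for two concrete reasons. First, even granting the intertwining, the standard faithfulness argument needs a faithful conditional expectation on the quotient $C^*_r(S^*)/I_r$ onto the image of the diagonal, and a faithful expectation on $C^*_r(S^*)$ does not automatically descend to a faithful one on a quotient. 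Second, the intertwining itself breaks down at exactly the point you mention: the Proposition is stated for arbitrary left cancellative $S$, and by \autoref{cor:eunitaryall} the semigroup $S^*$ fails to be E-unitary whenever $S$ does not embed in a group, so there exist non-idempotent $t$ with $\sigma(t)=1$ whose diagonal compression is a projection that need not lie in the closed span of $\{V_e\colon e\in E(S^*)\}$, let alone be congruent to $1$ modulo $I_r$. So your proposal stops short of establishing the statement. You should be aware, however, that the paper supplies no argument for this inclusion either: it defines $I_r$ as the image of $I$ and simply asserts exactness of the bottom row. In isolating this step you have located the real content of the Proposition; it remains unproved in both your text and the paper's.
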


\begin{rmk} There is one more ideal in $C^*(S^*)$ worth to be mentioned, namely a closed ideal $C$ generated by all commutators. Clearly, $C$ contains the ideal $I$, because $v_s^*v_s=1$. In the case of an abelian semigroup $S$, these ideals coincide. In the case $S=\mathbb{Z}_+$, one can easily verify $C^*(S)=C^*(S^*)$ is the Toeplitz algebra and then the ideal $C$ is isomorphic to the ideal of compact operators.  In the general case $C^*(S^*)/C\cong C^*(G_{ab})$, where $G_{ab}$ is an abelianization of $G$.   \end{rmk}

\begin{eg}\label{eg:free}
For any natural number $n$ let $\mathbb{F}_n^+$ denote the free monoid on $n$ generators $a_1,..., a_n$ with the empty word $e$. Let us compare $C^*(\mathbb{F}_n^+)$ (see \cite{Li2} for a detailed description) with $C^*({\mathbb{F}_n^+}^*)$. 
We have $\mathbb{F}_n^+= X_1\sqcup X_2 \sqcup... X_n\sqcup\{e\}$, where $X_i=a_i\mathbb{F}_n^+$ for $i=1,...,n$ are constructible right ideals in $\mathbb{S}_n$ (see \autoref{pgr:hull}). Therefore in  $C^*(\mathbb{F}_n^+)$ for $i\neq j$ we have 

\begin{equation}\label{freemon} v_{a_i}v_{a_i}^*v_{a_j}v_{a_j}^*=e_{X_i}e_{X_j}=e_\emptyset=0,\end{equation}
 which implies $v_{a_i}^*v_{a_j}=0$. The same holds in  $C^*_r(\mathbb{F}_n^+)$, $V_{a_i}^*V_{a_j}=0$.
Hence, the maximal group homomorphic image of $I_l(\mathbb{F}_n^+)$ is trivial. It follows that there is no canonical surjective *-homomorhism from $C^*(\mathbb{F}_n^+)$ onto $C^*(G)$. 

Moreover, knowing that every semigroup is a homomorphic image of a free monoid, one would expect the same to hold for the corresponding semigroup C*-algebras, but this is not true. To see this consider the semigroup $S=\mathbb{Z}_+\times\mathbb{Z}_+$, which is an abelian semigroup generated by $t_1=(1,0)$ and  $t_2=(0,1)$. Then there is a canonical homomorphism $\phi\colon\mathbb{F}_2^+\to S$ sending $a_1\mapsto t_1$, $a_2\mapsto t_2$. And we have $(1,0)S\cap (0,1)S=(1,1)S$, hence $v_{t_1}v_{t_1}^*v_{t_2}v_{t_2}^*=v_{(1,1)}v_{(1,1)}^*\neq 0$. Consequently, due to (\ref{freemon}) homomorphism $\phi$ does not extend to a *-homomorphism $C^*(\mathbb{F}_2^+)\to C^*(S)$. 

In the semigroup ${\mathbb{F}_n^+}^*$ (and its C*-algebra) the product $v_{a_1}v_{a_1}^*v_{a_2}v_{a_2}^*$ is a non-zero idempotent which allows the desired homomorphism to exist. And in general, the semigroup ${\mathbb{F}_n^+}^*$ has no zero, and its maximal group homomorphic image is the free group $\mathbb{F}_n$. So, from the diagram (\ref{diagram2}) we get the following short exact sequence:
$$0\to I\to C^*({\mathbb{F}_n^+}^*)\to C^*(\mathbb{F}_n).$$ 
The same holds for the reduced C*-algebras.
 
Since every semigroup is a homomorphic image of a free monoid using \autoref{prp:functor} we deduce that for any $n$-generated cancellative semigroup $S$ the C*-algebra $C^*(S^*)$ is a homomorphic image of $C^*({\mathbb{F}_n^+}^*)$. 

There is a natural quotient for $C^*({\mathbb{F}_n^+}^*)$ in the case $n=2$ and $n=3$. Consider a closed two-sided ideal $I_n$ generated by the element $p=1-\sum_i v_{a_i}v_{a_i}^*$. Multiplying $p$ by idempotents $v_{a_j}v_{a_j}^*$ one by one we obtain that $I_n$ contains every $v_{a_i}v_{a_i}^*v_{a_j}v_{a_j}^*$ and so contains $v_{a_i}^*v_{a_j}$ for all $i\neq j$. Consequently, the quotient $C^*({\mathbb{F}_n^+}^*)/I_n$ is canonically isomorphic to the Cuntz algebra $\mathcal{O}_n$. The quotient homomorphism can also be seen as the composition $C^*({\mathbb{F}_n^+}^*)\to C^*(\mathbb{F}_n^+)\to \mathcal{O}_n$, where the last map is given in Paragraph 8.2 of \cite{Li2}. In order to get the same in the case $n\geq 4$ one has to add elements $v_{a_i}^*v_{a_j}$ to the ideal $I_n$.
\end{eg}

\begin{pgr}\label{pgr:independence} Many of the results in \cite{Li} and \cite{Li2} hold only under the assumption of independence of the constructible right ideals.  According to \cite{Li}, for a cancellative semigroup $S$ the set of constructible right ideals $\mathcal{J}$ is said to be independent, if $X=\cup_{i=1}^nX_i$ for $X, X_1,..., X_n\in\mathcal{J}$ implies $X=X_i$ for some $i$.

If some of the constructible right ideals are not independent, then the images of  corresponding idempotents under the left regular representation of $S$ are not linearly independent. In such case the diagonal subalgebra $D=C^*(e_X\colon X\in\mathcal{J})$ in $C^*(S)$ is not isomorphic to  its image under the left regular representation.

Let us illustrate such situation on a semigroup $S=\mathbb{Z}_+\setminus \{1\}=\{0,2,3,...\}$ with the usual addition operation. In the notation (\ref{aa}), (\ref{aca}) we have:
$$2+S=\{2,4,5,6,...\},\quad 3+S=\{3,5,6,7,...\},\quad (-3)+2+S=\{2,3,4,...\}.$$
It follows that $(2+S)\cup (3+S)=(-3)+2+S$. Therefore, in $C^*_r(S)$ we have 
$$V_2V_2^*+V_3V_3^*-V_2V_2^*V_3V_3^*=V_3^*V_2V_2^*V_3.$$

But this problem does not exist for the algebra $C^*_r(S^*)$. Due to \cite{Wordin} the left regular representation of an inverse semigroup $P$ extends to a faithful representation of $\ell^1(P)$ on $\ell^2(P)$. Hence, images of elements of $S^*$ under the left regular representation are all linearly independent. The reason is that there are enough vectors in the basis of $\ell^2(S^*)$ to differ elements of $S^*$, unlike the subspace $\ell^2(S)$. In the same way the semigroup elements of $S^*$ are independent in $C^*(S^*)$ and in $C^*(S)$, since these are full semigroup C*-algebras of inverse semigroups ($S^*$ and $W$ respectively). 
\end{pgr}

We end this section with a note on nuclearity of semigroup C*-algebras in the case when $S$ is abelian. This is a direct consequence of Murphy's \cite{Murphy}. 
\begin{prp}
For an abelian cancellative semigroup $S$ the algebras $C^*(S^*)$, $C^*_r(S^*)$, $C^*(S)$, $C^*_r(S)$, $C^*(I_l(S))$, $C^*_r(I_l(S))$ are nuclear.
\end{prp}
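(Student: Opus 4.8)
The plan is to observe that every one of the six C*-algebras in the statement is a quotient of the single algebra $C^*(S^*)$, so that the whole assertion reduces to the nuclearity of $C^*(S^*)$ together with the permanence property that quotients of nuclear C*-algebras are nuclear. First I would collect the relevant surjections already produced in the paper: the vertical maps of diagram~(\ref{diagram}) give $C^*(S^*)\twoheadrightarrow C^*_r(S^*)$ and $C^*(S)\twoheadrightarrow C^*_r(S)$, the top row of the commutative diagram relating $S^*$, $S$ and $I_l(S)$ gives $C^*(S^*)\twoheadrightarrow C^*(S)\twoheadrightarrow C^*(I_l(S))$ (the first arrow being \autoref{lma:surj}), and the rightmost vertical map of that same diagram gives $C^*(I_l(S))\twoheadrightarrow C^*_r(I_l(S))$. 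Chasing these arrows, each of $C^*_r(S^*)$, $C^*(S)$, $C^*_r(S)$, $C^*(I_l(S))$ and $C^*_r(I_l(S))$ is realized as a quotient of $C^*(S^*)$.

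It then remains to prove that $C^*(S^*)$ is nuclear, and this is the heart of the matter. Here I would use that an abelian cancellative semigroup embeds in its abelian group of quotients $G=S^{-1}S$, which is amenable. By \autoref{cor:eunitaryall} the semigroup $S^*$ is therefore $E$-unitary with $G(S^*)=G$, and by the Milan--Steinberg isomorphism of \cite{Milan} one has $C^*(S^*)\cong C^*(E)\rtimes G$. Since the idempotents of $S^*$ commute, $E$ is a semilattice and $C^*(E)$ is commutative, hence nuclear; as $G$ is amenable, the crossed product of a nuclear C*-algebra by $G$ is again nuclear. This is precisely the point at which the statement becomes a direct consequence of \cite{Murphy}, and it yields nuclearity of $C^*(S^*)$ (and, incidentally, the coincidence of its full and reduced versions).

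Finally I would invoke the standard fact that a quotient of a nuclear C*-algebra is nuclear, and apply it to the five surjections assembled in the first step to deduce nuclearity of the remaining algebras. I expect the only genuinely delicate point to be the reduced algebra $C^*_r(I_l(S))$: the reduced C*-algebra functor is not covariant for surjections of inverse semigroups, so one cannot obtain $C^*_r(I_l(S))$ as a quotient of $C^*_r(S^*)$ directly through $\gamma\colon S^*\to I_l(S)$. The way around this --- and the step I would be most careful to justify --- is to route it instead through the canonical full-to-reduced surjection $C^*(I_l(S))\twoheadrightarrow C^*_r(I_l(S))$, using that $C^*(I_l(S))$ has already been shown nuclear as a quotient of $C^*(S^*)$.
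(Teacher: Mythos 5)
Your argument is correct, but it is a genuinely different route from the one the paper takes. The paper's proof is a one-line application of Murphy's Theorem~4.8: since $S$ is abelian, each of the six algebras is generated by \emph{commuting} isometries with commuting range projections, and Murphy's theorem asserts nuclearity of any such C*-algebra directly, with no need to single out $C^*(S^*)$ or to pass through quotients at all. Your route instead reduces everything to $C^*(S^*)$ via the surjections of diagram~(\ref{diagram}), \autoref{lma:surj} and the Norling diagram, and then proves nuclearity of $C^*(S^*)$ from E-unitarity (\autoref{cor:eunitaryall}), the Milan--Steinberg isomorphism $C^*(S^*)\cong C^*(E)\rtimes G$, and amenability of the abelian quotient group $G=S^{-1}S$; your care with $C^*_r(I_l(S))$ (routing it through the full-to-reduced surjection rather than through $\gamma$) is exactly right, since $C^*_r$ is not functorial for surjections. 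What each approach buys: the paper's is shorter, uniform over all six algebras, and needs no embedding of $S$ in a group; yours is more structural, ties the result to the amenability machinery of Section~\ref{secamenab} (indeed \autoref{cor:amenable} already gives nuclearity of $C^*(S^*)$, $C^*_r(S^*)$, $C^*(S)$, $C^*_r(S)$ once $G$ is amenable, so citing it would shorten your second step), and generalizes beyond the abelian case to any $S$ embeddable in an amenable group. One correction: the fact that a (partial) crossed product of a nuclear C*-algebra by an amenable group is nuclear is not what \cite{Murphy} proves --- Murphy's paper is specifically about C*-algebras generated by commuting isometries --- so that step should instead be justified by \autoref{cor:amenable} (via amenability of the associated groupoid and Theorem~5.6.18 of \cite{Brown}) or by a standard reference on partial crossed products by amenable groups.
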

\begin{proof}
All of the mentioned C*-algebras in this case are generated by commuting isometries with commuting range projections, so we can apply Theorem 4.8 of \cite{Murphy}.
\end{proof}

\section{Amenability and nuclearity}\label{secamenab}
	
	The classical definition of amenability is common for all semigroups  \cite{Paterson3}, we recall it further. Let $P$ be a semigroup. Right action of $P$ on $\ell^\infty(P)$ is given by
	$$\phi t(x)=\phi(tx),$$
	where $\phi\in \ell^\infty(P)$, $t,x\in P$. A mean $m$ on $\ell^\infty(P)$ is called \emph{left invariant} if $m(\phi t)=m(\phi)$ for all $t\in P$, $\phi\in \ell^\infty(P)$. The semigroup $P$ is \emph{left amenable} if there exists a left invariant mean. 
	For an inverse semigroup left amenability, right amenability and (two-sided) amenability are equivalent. A result of Duncan and Namioka \cite{Duncan2} states that \emph{ an inverse semigroup is amenable if and only if its maximal group homomorphic image is amenable}. The following result of \cite{Milan2} connects amenability of inverse semigroup with the weak containment property.

			\begin{thm}\label{thm:milan2} (D.~Milan \cite{Milan2})
	Let $P$ be an E-unitary inverse semigroup. Then the following statements are equivalent
	\begin{enumerate}
		\item $P$ is amenable,
		\item $P$ satisfies the weak containment property, i.e. $C^*(P)=C^*_r(P)$,
		\item the maximal group homomorphic image $G(P)$ is amenable.
	\end{enumerate}
	  \end{thm}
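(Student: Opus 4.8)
The plan is to prove the two equivalences $(1)\Leftrightarrow(3)$ and $(2)\Leftrightarrow(3)$ separately, since only the second genuinely uses E-unitarity. For $(1)\Leftrightarrow(3)$ I would simply invoke the Duncan--Namioka result \cite{Duncan2} recalled just above: an inverse semigroup is amenable precisely when its maximal group homomorphic image is amenable. Thus $P$ is amenable if and only if $G(P)$ is, and E-unitarity plays no role in this pair.

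The bridge to weak containment is the partial crossed product description valid for E-unitary semigroups. Since $P$ is E-unitary, by the result of Milan and Steinberg \cite{Milan} the full and reduced C*-algebras are the full and reduced partial crossed products $C^*(P)\cong C_0(X)\rtimes G(P)$ and $C^*_r(P)\cong C_0(X)\rtimes_{r}G(P)$, where $X$ is the spectrum of the idempotent semilattice $E(P)$ and $G(P)$ acts partially on $X$ through $\sigma$. For $(3)\Rightarrow(2)$ I would use that a partial action of an amenable group has coinciding full and reduced crossed products: amenability of $G(P)$ gives $C_0(X)\rtimes G(P)=C_0(X)\rtimes_{r}G(P)$, that is $C^*(P)=C^*_r(P)$, which is exactly the weak containment property.

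The essential direction is $(2)\Rightarrow(3)$, and this is where I expect the main difficulty. The strategy exploits the canonical quotient $\sigma\colon P\to G(P)$, which induces a surjection $C^*(P)\to C^*(G(P))$ and, by the result of Duncan (\cite{Duncan}) recorded earlier in the excerpt, a surjection $C^*_r(P)\to C^*_r(G(P))$; together with the two canonical maps $C^*(P)\to C^*_r(P)$ and $C^*(G(P))\to C^*_r(G(P))$ these form a commuting square. Here E-unitarity enters through $\sigma^{-1}(1)=E(P)$: in the groupoid picture this says the cocycle $c\colon\mathcal{G}_u(P)\to G(P)$ induced by $\sigma$ satisfies $c^{-1}(1)=\mathcal{G}_u(P)^{(0)}$, equivalently the character of $E(P)$ sending every idempotent to $1$ is a global fixed point of the partial action, and evaluation at it is a $G(P)$-equivariant map $C_0(X)\to\mathbb{C}$. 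Assuming $(2)$, the left vertical map of the square is an isomorphism, and I would deduce that the right vertical map $C^*(G(P))\to C^*_r(G(P))$ is injective; since it is always surjective it is an isomorphism, which for a group means precisely that $G(P)$ is amenable, giving $(3)$.

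The main obstacle is this last deduction: that weak containment for $C_0(X)\rtimes G(P)$ forces weak containment for the fixed-point quotient $C^*(G(P))=\mathbb{C}\rtimes G(P)$. Reduced crossed products are not functorial for arbitrary equivariant quotients, so I would need to verify that evaluation at the $\sigma$-fixed character induces a well-defined map on the reduced algebras intertwining the two canonical surjections $C^*(\,\cdot\,)\to C^*_r(\,\cdot\,)$; equivalently, that the kernel of $C^*(G(P))\to C^*_r(G(P))$ is the image of the kernel of $C^*(P)\to C^*_r(P)$. It is exactly here that E-unitarity, rather than the mere existence of $\sigma$, is indispensable, since $\sigma^{-1}(1)=E(P)$ is what supplies the fixed point and thereby the compatibility of the full and reduced completions.
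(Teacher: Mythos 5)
A preliminary remark: the paper does not prove this statement. It is quoted with attribution to Milan \cite{Milan2} and used as a black box in \autoref{cor:amenable}, so there is no in-paper argument to compare yours against; I can only assess your proposal on its own terms. Your handling of $(1)\Leftrightarrow(3)$ via Duncan--Namioka \cite{Duncan2}, and of $(3)\Rightarrow(2)$ via the Milan--Steinberg isomorphisms $C^*(P)\cong C^*(E)\rtimes G(P)$ and $C^*_r(P)\cong C^*(E)\rtimes_r G(P)$ together with the coincidence of full and reduced partial crossed products over an amenable group, is sound.

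The direction $(2)\Rightarrow(3)$, however, contains a genuine gap which you name but do not close. The commuting square you invoke does exist: writing $p\colon C^*(P)\to C^*(G(P))$ and $q\colon C^*_r(P)\to C^*_r(G(P))$ for the two surjections induced by $\sigma$ (the latter is the Duncan--Paterson map recalled in the paper just before diagram (\ref{diagram2})), and $\lambda_P$, $\lambda_G$ for the vertical maps, one has $q\circ\lambda_P=\lambda_G\circ p$. But the inference ``$\lambda_P$ is an isomorphism, hence $\lambda_G$ is injective'' is not a diagram chase. Given $x\in\ker\lambda_G$, choose $y$ with $p(y)=x$; commutativity only gives $\lambda_P(y)\in\ker q$, and to conclude $x=p(y)=0$ you would need $\ker q\subseteq\lambda_P(\ker p)$, equivalently $\ker\lambda_G=p(\ker\lambda_P)$ --- which, when $\ker\lambda_P=0$, is exactly the statement to be proved. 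This is the assertion that $C^*_r(G(P))$ is the quotient of $C^*_r(P)$ by the image of $\ker p$, a non-automatic exactness statement at the reduced level; your final paragraph correctly identifies it as the crux but resolves it only by declaring that E-unitarity ``supplies the compatibility.''

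To actually close it, the fixed point must be used representation-theoretically rather than through the square. Because $P$ is E-unitary (and, in the nontrivial case, has no zero), the character $\chi_1$ of $E(P)$ sending every idempotent to $1$ is globally fixed by the partial action of $G(P)$, so $(f u_g)\mapsto f(\chi_1)$ is a one-dimensional covariant representation of $C^*(E)\rtimes G(P)\cong C^*(P)$, namely the trivial representation $\mathbf{1}_{G(P)}\circ\sigma$ of $P$. Weak containment makes this representation weakly contained in the left regular representation of $P$, and one must then argue --- e.g.\ by a positive-definite-function/Fell-absorption computation, or by citing the known fact that a partial action with a global fixed point and the weak containment property forces amenability of the acting group --- that this yields $\mathbf{1}_{G(P)}\prec\lambda_{G(P)}$, i.e.\ amenability of $G(P)$. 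Some input of this kind is indispensable; the commuting square alone does not deliver the conclusion.
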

	
	\begin{cor}\label{cor:amenable} Let $S$ be embeddable in a group. Then the following conditions are equivalent and imply that $C^*_r(S)$ and $C^*(S)$ are nuclear:
		\begin{enumerate}
			\item the group $G$ generated by $S$ is amenable,
			\item $S^*$ is amenable,
			\item $S^F$ is amenable,
			\item $S^*$ has the weak containment property, i.e. $C^*(S^*)=C^*_r(S^*)$,
			\item $S^F$ has the weak containment property,
				\item $C^*_r(S^*)$ is nuclear,
				\item  $C^*_r(S^F)$ is nuclear.
			\end{enumerate}
	\end{cor}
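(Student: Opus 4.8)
The plan is to prove this chain of equivalences by first establishing the cycle among the amenability and weak-containment conditions using the earlier results, and then deducing nuclearity as a consequence. First I would observe that since $S$ is embeddable in a group, \autoref{cor:eunitaryall} tells us that both $S^*$ and $S^F$ are E-unitary, so \autoref{thm:milan2} applies to each of them. Moreover, by \autoref{lma:eunitary} the maximal group homomorphic image satisfies $G(S^*)=G(S^F)\cong G$, the group generated by $S$. Applying \autoref{thm:milan2} to $P=S^*$ yields the equivalence of (1) (amenability of $G=G(S^*)$), (2) (amenability of $S^*$), and (4) (the weak containment property for $S^*$). Applying the same theorem to $P=S^F$ yields the equivalence of (1) (now as amenability of $G(S^F)=G$), (3), and (5). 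Since both applications share condition (1), the five conditions (1)--(5) are all equivalent.

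It remains to fold in the nuclearity conditions (6) and (7) and to establish the final nuclearity implication for $C^*(S)$ and $C^*_r(S)$. For the implications $(4)\Rightarrow(6)$ and $(5)\Rightarrow(7)$, I would argue that under the weak containment property $C^*(S^*)=C^*_r(S^*)$, so it suffices to show that the full C*-algebra $C^*(S^*)$ is nuclear when $G$ is amenable; this follows because amenability of the E-unitary inverse semigroup is equivalent to amenability of its maximal group image, and for an amenable group the associated inverse-semigroup C*-algebra is nuclear (one may use the description of $C^*(S^*)$ via the group $G$ from \autoref{thm:modelsf}, or invoke the standard fact that crossed products and inverse semigroup algebras over amenable groups inherit nuclearity). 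For the reverse implications $(6)\Rightarrow(1)$ and $(7)\Rightarrow(1)$, I would use that $C^*_r(G)$ is a quotient of $C^*_r(S^*)$ (and of $C^*_r(S^F)$) by \autoref{diagram2}; since nuclearity passes to quotients, nuclearity of $C^*_r(S^*)$ forces nuclearity of $C^*_r(G)$, which for a group is equivalent to amenability of $G$. This closes the loop and shows (1)--(7) are all equivalent.

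Finally, for the nuclearity of $C^*_r(S)$ and $C^*(S)$, I would invoke the short exact sequences from the diagram in \eqref{diagram}: $C^*_r(S)$ is a quotient of $C^*_r(S^*)$ and $C^*(S)$ is a quotient of $C^*(S^*)$. Since quotients of nuclear C*-algebras are nuclear, condition (6) (equivalently, (1)) immediately gives nuclearity of $C^*_r(S)$, and condition (4) together with nuclearity of $C^*(S^*)=C^*_r(S^*)$ gives nuclearity of $C^*(S)$.

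The main obstacle I anticipate is the nuclearity step, specifically the implication that the weak containment property (or amenability of $G$) yields nuclearity of $C^*_r(S^*)$. Establishing this cleanly requires identifying $C^*(S^*)$ with a crossed product or transformation groupoid C*-algebra for which nuclearity is known to follow from amenability of $G$ — precisely the kind of description provided by the model of \autoref{thm:modelsf} and the result of Milan and Steinberg giving $C^*(S^*)\cong C^*(E)\rtimes G$. Once that identification is in hand, nuclearity for an amenable $G$ is standard, but pinning down the correct form of the crossed product and verifying the hypotheses is where the real work lies.
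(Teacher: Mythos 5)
Your proposal is correct, and the first and last steps coincide exactly with the paper's proof: E-unitarity from \autoref{cor:eunitaryall} plus the identification $G(S^*)=G(S^F)=G$ feed into \autoref{thm:milan2} to give (1)--(5), and the final nuclearity of $C^*(S)$ and $C^*_r(S)$ is extracted from the quotient maps in diagram \eqref{diagram} exactly as you describe (including the observation that weak containment is needed to transfer nuclearity from the reduced to the full algebra). Where you diverge is the bridge between (1) and the nuclearity conditions (6), (7). The paper handles both directions at once by citing that amenability of the maximal group image of an inverse semigroup is equivalent to amenability of its universal groupoid (Proposition 6.6 of \cite{Milan3}), which in turn is equivalent to nuclearity of the reduced groupoid C*-algebra by Theorem 5.6.18 of \cite{Brown}. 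You instead split the two directions: for $(1)\Rightarrow(6),(7)$ you pass through the Milan--Steinberg partial crossed product description $C^*_r(S^*)\cong C^*(E)\rtimes_r G$ and the nuclearity of partial crossed products of commutative algebras by amenable groups, and for $(6)\Rightarrow(1)$ you use that $C^*_r(G)$ is a quotient of $C^*_r(S^*)$ via \eqref{diagram2}, that nuclearity passes to quotients, and Lance's theorem for discrete groups. Both routes are sound; the paper's is a single two-sided citation, while yours is more self-contained within the machinery the paper has already set up (the E-unitary model and the partial crossed product picture of Section 7), at the cost of having to verify separately the standard but nontrivial fact about nuclearity of partial crossed products by amenable groups, which you correctly flag as the one point requiring real work.
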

	\begin{proof} By \autoref{cor:eunitaryall}, inverse semigroups $S^F$ and $S^*$ are E-unitary and $G(S^F)=G(S^*)$ coincides with the group generated by $S$. Applying \autoref{thm:milan2}, we get equivalence of all the conditions 1)-5) above. By Proposition 6.6. in \cite{Milan3}, the maximal group homomorphic image $G(P)$ of an inverse semigroup $P$ is amenable if and only if the universal groupoid of $P$ is amenable, which is equivalent to nuclearity of its reduced C*-algebra by Theorem 5.6.18 in \cite{Brown}. Hence, we obtain equivalence of 6) and 7) to 1).	Finally, conditions 6) and 7) and the diagram (\ref{diagram}) imply that $C^*_r(S)$ and $C^*(S)$ are nuclear.
		\end{proof}
		
	\begin{cor}\label{cor:amenable2} Let $S$ be a left amenable cancellative semigroup. Then $S^*$ and $S^F$ are amenable inverse semigroups.
	\end{cor}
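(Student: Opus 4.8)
The plan is to reduce the whole statement to the amenability of a single group, and then to produce that amenability from the invariant mean on $S$. By the result of Duncan and Namioka \cite{Duncan2}, an inverse semigroup is amenable if and only if its maximal group homomorphic image is amenable; hence $S^*$ (respectively $S^F$) is amenable precisely when $G(S^*)$ (respectively $G(S^F)$) is amenable. By the first paragraph of the proof of \autoref{lma:eunitary}, the map $S^F\to G(S^F)$ factors through $\beta\colon S^F\to S^*$, so $G(S^*)=G(S^F)=:G$. Thus it suffices to prove that the group $G$ is amenable.

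To do this I would first extract reversibility of $S$ from left amenability. Let $\mu$ be the finitely additive probability measure on subsets of $S$ associated with a left-invariant mean, so that $\mu(\{x\in S:ax\in A\})=\mu(A)$ for every $a\in S$ and $A\subseteq S$. Taking $A=aS$ and using left cancellativity, one has $\{x\in S:ax\in aS\}=S$, whence $\mu(aS)=\mu(S)=1$ for every $a\in S$. Since $\mu$ is finitely additive it follows that $\mu(aS\cap bS)\ge \mu(aS)+\mu(bS)-1=1$, so $aS\cap bS\neq\emptyset$ for all $a,b\in S$. With $S$ (two-sided) cancellative, the right-handed form of Ore's theorem \autoref{ore} then embeds $S$ into its group of right quotients $G'=SS^{-1}$.

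Next I would identify $G'$ with $G$ and invoke the classical amenability transfer. Since $S$ is now known to embed in a group, \autoref{lma:eunitary} applies and gives that $G=G(S^*)=G(S^F)$ is isomorphic to the group generated by $S$; as $S$ is reversible this group is exactly $SS^{-1}=G'$. By the classical fact that the group of quotients of a left amenable cancellative semigroup is itself amenable (see \cite{Paterson3}), $G'$ is amenable, hence so is $G$. Feeding this back through Duncan–Namioka shows $S^*$ and $S^F$ are amenable, as claimed.

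The step I expect to be the main obstacle is the passage from a left-invariant mean on the semigroup $S$ to a left-invariant mean on its enveloping group $SS^{-1}$: the reduction to $G$ and the reversibility computation are essentially formal, but extending the mean across the adjunction of inverses is the substantive content. I would handle it by citing the classical result \cite{Paterson3}; alternatively it can be reproved directly, either by a F\o{}lner-type argument transported from $S$, or by noting that $S$ is a left amenable cancellative subsemigroup with $G=SS^{-1}$ and applying the standard extension of invariant means along the Ore localization.
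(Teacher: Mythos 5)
Your proof is correct and follows essentially the same route as the paper: both rest on Paterson's Proposition 1.27 in \cite{Paterson3} (a left amenable cancellative semigroup embeds in its amenable group of quotients) and then transfer amenability to $S^*$ and $S^F$ through their common maximal group homomorphic image. The only cosmetic differences are that you re-derive the Ore condition directly from the invariant mean (which is already part of Paterson's proposition) and invoke Duncan--Namioka rather than routing through condition 1) of \autoref{cor:amenable}.
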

	\begin{proof} By Proposition 1.27 in \cite{Paterson3}, every left amenable cancellative semigroup embeds in a group $G$, such that $G=SS^{-1}$ and $G$ is amenable. Hence, condition 1) in \autoref{cor:amenable} is satisfied.		\end{proof}
	 
	 \begin{rmk}
	 	Nuclearity of the C*-algebra $C^*_r(S)$ does not imply nuclearity of $C^*_r(S^*)$ (or $C^*_r(S^F)$). The counterexample is given by the free monoid $\mathbb{F}_n^+$, the subsemigroup generating the free group $\mathbb{F}_n$. By \autoref{eg:free}, there exist surjective *-homomorphisms $C^*({\mathbb{F}_n^+}^*)\to C^*(\mathbb{F}_n)$ and $C^*_r({\mathbb{F}_n^+}^*)\to C^*_r(\mathbb{F}_n)$. Hence, $C^*_r({\mathbb{F}_n^+}^*)$ is not nuclear. This is natural considering the fact that $\mathbb{F}_n^+$ is not amenable. Nevertheless,  as shown by Nica in \cite{Nica}, the C*-algebra $C^*(\mathbb{F}_n^+)$ is nuclear.	 \end{rmk}

We finish this section with a result connecting directly amenability of $S^*$ to amenability of $S$, analogous to Proposition 1.27 in \cite{Paterson3}.

\begin{prp}\label{prp:samen}
Let $S$ be a left cancellative left amenable semigroup. Then $S^*$ is an amenable inverse semigroup.
\end{prp}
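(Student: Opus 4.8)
The plan is to reduce the statement to a question about the maximal group homomorphic image $G(S^*)$ and then transport amenability from $S$ through the canonical homomorphism. By the result of Duncan and Namioka \cite{Duncan2} recalled above, $S^*$ is amenable if and only if $G(S^*)$ is amenable, so it suffices to prove the latter. First I would consider the semigroup homomorphism $\phi\colon S\to G(S^*)$ obtained by composing the embedding $S\hookrightarrow S^*$, $s\mapsto v_s$, with the quotient map $\sigma\colon S^*\to G(S^*)$ of \autoref{pgr:invsem}. Since the elements $v_s$ generate $S^*$ and $\sigma$ is surjective, the image $\Sigma:=\phi(S)$ generates $G(S^*)$ as a group.

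Next I would push the left invariant mean forward along $\phi$. Given a left invariant mean $m$ on $\ell^\infty(S)$, setting $\bar m(\psi)=m(\psi\circ\phi)$ for $\psi\in\ell^\infty(\Sigma)$ defines a mean on $\Sigma$, and the short computation $(\psi t)\circ\phi=(\psi\circ\phi)\,s_0$ for $t=\phi(s_0)$ shows that $\bar m$ is left invariant; hence $\Sigma$ is a left amenable semigroup. The crucial observation is now that $\Sigma$, being a subsemigroup of the group $G(S^*)$, is automatically two-sided cancellative, even though $S$ itself need only be left cancellative and may well fail to embed in a group. Thus $\Sigma$ is a left amenable cancellative semigroup, and I can apply Proposition 1.27 of \cite{Paterson3} (the statement to which this proposition is analogous, underpinned by the Dubreil--Ore embedding \autoref{ore}) directly to $\Sigma$: it embeds in the amenable group of quotients $\Sigma\Sigma^{-1}$. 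Since $\Sigma\Sigma^{-1}$ is a subgroup of $G(S^*)$ containing $\Sigma$, and $\Sigma$ generates $G(S^*)$, we conclude $\Sigma\Sigma^{-1}=G(S^*)$, so $G(S^*)$ is amenable and therefore $S^*$ is amenable.

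The main obstacle I anticipate is conceptual rather than computational: $\phi$ need not be injective and $\Sigma$ is in general a proper subsemigroup of $G(S^*)$, so the invariant mean on $S$ cannot be pushed all the way onto $G(S^*)$ in a single step. The point of routing through $\Sigma$ is precisely that $\Sigma$ inherits left amenability from $S$ while gaining two-sided cancellativity from its ambient group, which is what lets me invoke the group-of-quotients argument of \cite{Paterson3} without assuming that $S$ is right cancellative. The remaining care is to verify that the quotient group $\Sigma\Sigma^{-1}$ produced abstractly by the embedding theorem genuinely coincides with $G(S^*)$ rather than merely mapping into it, which follows from $\Sigma$ being a generating subsemigroup of the group $G(S^*)$.
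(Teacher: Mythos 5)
Your proof is correct, but it follows a genuinely different route from the paper's. The paper argues directly and elementarily: viewing $S\subset S^*$ via $a\mapsto v_a$, it sets $m(\phi)=m_0(\phi|_S)$ and checks left invariance of $m$ on the generators $v_a$ and $v_a^*$ by hand, the whole point being the relation $v_a^*v_a=1$, which gives $(\phi v_a^*)|_S\,a=\phi|_S$; no group theory enters and the maximal group image is never mentioned. You instead reduce to amenability of $G(S^*)$ via Duncan--Namioka and then invoke Paterson's Proposition 1.27 --- crucially applied not to $S$, which is only left cancellative, but to its image $\Sigma=\sigma(S)\subset G(S^*)$, which is two-sided cancellative for free as a subsemigroup of a group. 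That move is exactly what upgrades the paper's own \autoref{cor:amenable2} (which assumes two-sided cancellativity of $S$) to the present hypotheses, and it is the main insight of your argument. Two standard facts are used tacitly and would deserve a sentence each: that $\Sigma\Sigma^{-1}$ computed inside $G(S^*)$ is closed under multiplication (this needs left reversibility of $\Sigma$, which follows from its left amenability), and the uniqueness of the group of right quotients, which identifies the abstract amenable group produced by Paterson's proposition with the concrete subgroup $\Sigma\Sigma^{-1}=G(S^*)$ --- or, more cheaply, one notes that $G(S^*)$ is a homomorphic image of that amenable group. What the paper's proof buys is complete self-containedness; what yours buys is the additional structural information that $G(S^*)$ is realized as the group of right quotients of the left amenable cancellative semigroup $\Sigma$.
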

\begin{proof}
We consider $S$ embedded in $S^*$ by a map $a\in S\mapsto v_a\in S^*$ and $\ell^\infty(S)\subset \ell^\infty(S^*)$. Let $m_0$ be a left invariant mean on $\ell^\infty(S)$ and define a mean on $\ell^\infty(S^*)$ by $m(\phi)=m_0(\phi|_S)$, where for any $x\in S$ we put
$$\phi|_S(x)=\phi(v_x).$$
Since $S^*$ is generated by $S$, it is sufficient to check left invariance of $m$ on generators $v_a$, $v_b^*$ for all $a,b\in S$. Take $\phi\in \ell^\infty(S^*)$ and $a\in S$ and calculate using left invariance of $m_0$:
$$m(\phi v_a)=m_0((\phi v_a)|_{S})=m_0(\phi|_S a)=m_0(\phi|_S)=m(\phi),$$
$$m(\phi v_a^*)=m_0((\phi v_a^*)|_{S})=m_0((\phi v_a^*)|_S a)=m_0((\phi v_a^*v_a)|_S ) =m_0(\phi|_S)=m(\phi)  $$
Hence, $m$ is a left invariant mean on $\ell^\infty(S^*)$.\end{proof}

\section{Crossed products of universal inverse semigroups}\label{seccross}

\begin{dfn} Let $P$ be an inverse semigroup. \emph{An  action} $\alpha$ of $P$ on a space $X$ is a *-homomorphism $P\to \mathcal{I}(X)$, $\alpha_s\colon D_{s^*}\to D_s$, such that the union of all $D_s$ coincides with $X$.  We call it \emph{unital} if the image of the unit element in $P$ is the identity map on $X$. If $X$ is a locally compact Hausdorff topological space, we require  every $\alpha_s$ to be continuous and $D_s$ to be open in $X$.
\end{dfn}

\begin{lma}\label{lma:actsp} There is a one-to-one correspondence between  actions of $S$ on a space $X$ by injective maps and unital actions of $S^*$ on $X$. 
\end{lma}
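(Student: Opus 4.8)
The plan is to build the correspondence directly from the universal property by which $S^*$ was constructed, namely that $S^*=F(S)/(R_1\cup R_4)$ is the largest inverse semigroup generated by $S$ as a semigroup of isometries. In one direction, I would start with an action $\alpha$ of $S$ on $X$ by injective maps, so that each $\alpha_p\colon X\to X$ is injective with full domain and $\alpha_p\alpha_q=\alpha_{pq}$. By \autoref{act} the maps $\{\alpha_p,\alpha_p^*\}$ generate an inverse subsemigroup of $\mathcal{I}(X)$, and the assignment $v_p\mapsto\alpha_p$ respects the defining relations of $S^*$: the relations in $S$ (these are $R_4$) hold because $\alpha$ respects products in $S$; the relation $v_p^*v_p=1$ (this is $R_1$) holds because $\alpha_p$ has full domain and hence $\alpha_p^*\alpha_p=\mathrm{id}_X$; and commutation of idempotents is automatic in $\mathcal{I}(X)$ by Vagner's theorem (\autoref{vagner}). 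Thus $v_p\mapsto\alpha_p$ extends uniquely to a $*$-homomorphism $\tilde{\alpha}\colon S^*\to\mathcal{I}(X)$. To recognise $\tilde{\alpha}$ as a unital action I would note that $\alpha_1$ is injective and idempotent ($\alpha_1\alpha_1=\alpha_1$), which forces $\alpha_1=\mathrm{id}_X$; hence $\tilde{\alpha}_{v_1}=\mathrm{id}_X$, so $\bigcup_s D_s=X$ and the action is unital.

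Conversely, given a unital action $\tilde{\alpha}$ of $S^*$ on $X$, I would simply restrict it to the generators and set $\alpha_p:=\tilde{\alpha}_{v_p}$. Unitality together with $v_p^*v_p=1$ gives $\tilde{\alpha}_{v_p}^*\tilde{\alpha}_{v_p}=\tilde{\alpha}_{v_1}=\mathrm{id}_X$, which in $\mathcal{I}(X)$ says precisely that the domain $D_{v_p^*}$ of $\alpha_p$ is all of $X$; thus each $\alpha_p$ is an injective map defined on the whole of $X$. Moreover $\alpha_p\alpha_q=\tilde{\alpha}_{v_pv_q}=\tilde{\alpha}_{v_{pq}}=\alpha_{pq}$, using the relation $v_pv_q=v_{pq}$ coming from $R_4$, so $\alpha$ is an injective action of $S$.

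Finally I would check that the two assignments are mutually inverse. Passing from $\alpha$ to $\tilde{\alpha}$ and back returns $\tilde{\alpha}_{v_p}=\alpha_p$ by construction; passing from $\tilde{\alpha}$ to $\alpha$ and back recovers $\tilde{\alpha}$ because a $*$-homomorphism out of $S^*$ is determined by its values on the generating isometries $v_p$. I expect the only genuinely delicate point to be the identification of the hypothesis ``$\tilde{\alpha}$ unital'' with ``each $\alpha_p$ has full domain'': it is exactly the relation $v_p^*v_p=1$ that distinguishes $S^*$ from $S^F$ and makes the source projection of each $\tilde{\alpha}_{v_p}$ equal to $\mathrm{id}_X$, so that matching it against the requirement that $S$ act by injective maps defined on all of $X$ is the heart of the argument. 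In the topological setting one further has to verify that the $D_s$ are open and the $\tilde{\alpha}_s$ continuous; this follows because every $\tilde{\alpha}_s$ is built from compositions, images and preimages of the maps $\alpha_p$, and I would record the matching of these regularity hypotheses (open embeddings on the semigroup side) as part of the statement rather than treat it as a separate difficulty.
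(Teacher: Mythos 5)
Your proposal is correct and follows essentially the same route as the paper: both directions rest on \autoref{act} to produce the inverse subsemigroup $S^*_\alpha\subset\mathcal{I}(X)$, on the observation that full domain of each $\alpha_p$ is exactly the relation $v_p^*v_p=1$ defining $S^*$ (so the map factors through the quotient of the free inverse semigroup), and on reading off unitality in the converse direction. Your extra checks (that $\alpha_1=\mathrm{id}_X$, that the assignments are mutually inverse, and the topological caveat) only make explicit what the paper leaves implicit or defers to the following remark.
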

\begin{proof} Let $\alpha$ be an action of $S$ on $X$, i.e. $\alpha_s\alpha_t=\alpha_{st}$ for any $s,t\in S$, such that each $\alpha_s$ is injective.  By Lemma \ref{act}, denoting $D_s\subset X$ the image of $\alpha_s$ and defining $\alpha_s^*\colon D_s\to X$ as the inverse of $\alpha_s$, we get a set generating an inverse subsemigroup $S^*_\alpha$ in $\mathcal{I}(X)$. Clearly, in this semigroup the map $\alpha_s^*\alpha_s$ is an identity on $X$ for any $s\in S$. Hence, there is a surjective *-homomorphism $\tilde{\alpha}\colon S^*\to S^*_\alpha$, which gives an action of $S^*$ by partial bijections on $X$. And we see that $\tilde{\alpha}$ is a unital  action of $S^*$ on $X$.

Now suppose $\alpha$ is a unital action of $S^*$ on $X$. Then define $\tilde{\alpha}(s)=\alpha(v_s)$ for all $s\in S$. Then multiplicativity follows immediately. Unitality of $\alpha$ implies that 
$$\tilde{\alpha}(s)^*\tilde{\alpha}(s)=\alpha(v_s^*v_s)=\mathrm{id}.$$
Hence, for every $s\in S$ $\tilde{\alpha}(s)$ is a bijection with the domain equal to $X$.
\end{proof}

\begin{rmk}
The previous lemma holds also in topological setting, i.e. when $X$ is a locally compact Hausdorff topological space and each $\alpha_s$ is continuous.\end{rmk}

\begin{dfn}\label{dyn} By an \emph{injective action with ideal images} $\alpha$ of a left cancellative semigroup $S$ on a C*-algebra $A$ we mean a set of injective *-homomorphisms $\alpha_s$ on $A$ such that for every $s,t\in S$,  $\alpha_{st}=\alpha_s\alpha_t$ and  $\alpha_s(A)$ is a closed two-sided *-ideal in $A$. In this case we say that $(\alpha,S,A)$ is an \emph{injective C*-dynamical system}.

 \emph{A partial automorphism} $\phi$ on a C*-algebra $A$ is a *-isomorphism $\phi\colon J_1\to J_2$, where $J_1$, $J_2$ are closed two-sided *-ideals in $A$.    For a C*-algebra $A$ denote by $\mathcal{I}(A)$ the inverse semigroup of partial automorphisms on $A$, with a product and an inverse map defined similarly to $\mathcal{I}(X)$ (see Section 2).  

\emph{An  action} $\alpha$ of an inverse semigroup $P$ on a C*-algebra $A$ is a *-ho\-mo\-mor\-phism $P\to \mathcal{I}(A)$, $\alpha_s\colon E_{s^*}\to E_s$, such that the union of all $E_s$ coincides with $A$. In this case we say that $(\alpha,P,A)$ is a \emph{C*-dynamical system}. If $P$ has a unit $1$ and $\alpha_1=\mathrm{id}_A$, we say that the action $\alpha$ is \emph{unital}. 

\end{dfn}

\begin{lma}\label{lma:acta} There is a one-to-one correspondence between injective actions of $S$ with ideal images on a C*-algebra $A$  and unital actions of $S^*$ on $A$. 
\end{lma}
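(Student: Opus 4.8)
The plan is to mirror the correspondence already established at the level of spaces in \autoref{lma:actsp}, replacing injective maps by injective $*$-homomorphisms and open sets by closed two-sided ideals. First I would start with an injective action with ideal images $\alpha$ of $S$ on $A$. For each $s\in S$ the map $\alpha_s\colon A\to\alpha_s(A)$ is a $*$-isomorphism onto the ideal $E_s:=\alpha_s(A)$, so it admits an inverse $\alpha_s^*\colon E_s\to A$; setting $E_{s^*}=A$ I obtain a partial automorphism in $\mathcal{I}(A)$. By the same bookkeeping as in \autoref{act} and \autoref{lma:actsp}, the collection $\{\alpha_s,\alpha_s^*\}$ generates an inverse subsemigroup of $\mathcal{I}(A)$ in which $\alpha_s^*\alpha_s=\mathrm{id}_A$ for every $s$. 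By the universal property of $S^*$ (i.e.\ the defining relation $v_s^*v_s\sim 1$ together with Vagner's \autoref{vagner} forcing the idempotents to commute), the assignment $v_s\mapsto\alpha_s$ extends to a unique $*$-homomorphism $\tilde\alpha\colon S^*\to\mathcal{I}(A)$, which is an action because the ideals $E_s$ together with $A$ cover $A$, and it is unital since $v_s^*v_s\mapsto\mathrm{id}_A$.

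Conversely, given a unital action $\alpha$ of $S^*$ on $A$, I would define $\tilde\alpha(s)=\alpha(v_s)$ for $s\in S$. Multiplicativity $\tilde\alpha(st)=\tilde\alpha(s)\tilde\alpha(t)$ is immediate from $v_{st}=v_sv_t$. Unitality gives $\tilde\alpha(s)^*\tilde\alpha(s)=\alpha(v_s^*v_s)=\mathrm{id}_A$, so each $\tilde\alpha(s)$ is a $*$-isomorphism defined on all of $A$ whose image is the ideal $\alpha(v_s)(A)=E_{v_s}$; in particular each $\tilde\alpha(s)$ is injective with ideal image, so $\tilde\alpha$ is an injective action with ideal images. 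Finally I would check that the two constructions are mutually inverse: starting from $\alpha$ on $S$, passing to $S^*$ and restricting back to the generators $v_s$ returns $\alpha_s$, and symmetrically for an action of $S^*$, using that $S^*$ is generated by the $v_s$ so that an action is determined by its values on the generators.

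The main point requiring care, as opposed to the purely set-theoretic \autoref{lma:actsp}, is verifying that a partial automorphism has the correct domain and image as \emph{ideals} rather than arbitrary subsets, and that the product in $\mathcal{I}(A)$ of two such partial automorphisms again has ideal domain. The product $\alpha_s^*\alpha_t$ restricts to the ideal $\alpha_t^{-1}(E_s\cap E_t)$, and one must confirm that the intersection of two ideals is an ideal and that its preimage under a $*$-isomorphism is an ideal---both standard C*-facts---so that all compositions genuinely land in $\mathcal{I}(A)$. This is precisely the point where the hypothesis that the images $\alpha_s(A)$ be \emph{ideals} (and not merely C*-subalgebras) is used, and it is the reason \autoref{dyn} imposes that restriction; the remaining verifications are formally identical to the proof of \autoref{lma:actsp}.
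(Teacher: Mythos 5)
Your proposal is correct and follows essentially the same route as the paper: both reduce the statement to the space-level correspondence of \autoref{lma:actsp}, observe that each $\alpha_s$ being a $*$-homomorphism with ideal image makes the generated partial bijections into partial automorphisms, and invoke the universal property of $S^*$ (the relation $v_s^*v_s\sim 1$) for the forward direction and restriction to the generators $v_s$ for the reverse. Your added remarks on why intersections and preimages of ideals remain ideals make explicit a point the paper leaves implicit, but do not change the argument.
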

\begin{proof}  
For an injective action $\alpha$ of $S$ on $A$, we define for any $s\in S$ the domain $E_{v_s^*}=A$ and the range $E_{v_s}=\alpha_s(A)$ of $\tilde{\alpha}(v_s)=\alpha_s$. For the inverse map we put $\tilde{\alpha}(v_s^*)=\alpha_s^*\colon E_{v_s}\to E_{v_s^*}$.   Following the proof of \autoref{lma:actsp}, we obtain an action of $S^*$ on the underlying space of $A$. Since $\alpha_s$ is a *-homomorphism, the same is true for $\tilde{\alpha}(v_s)$, $\tilde{\alpha}(v_s^*)$ and the products of such maps. Hence, $\tilde{\alpha}$  given by \autoref{lma:actsp} is an action of $S^*$ on the C*-algebra $A$. The reverse statement follows similarly from \autoref{lma:actsp}.
\end{proof}

\begin{rmk}\label{unital} Notice that if $A$ is unital and $\tilde{\alpha}$ is an induced action of $S^*$ on $A$ as in the Lemma,  we may extend $\tilde{\alpha}(v_s^*)$ to a *-endomorphism on $A$ by setting $\tilde{\alpha}(v_s^*)(a)=\alpha_s^*(\alpha_s(1)a)$. But one should remember that this extension is injective only on $E_{v_s}$. Then $\tilde{\alpha}$ is an action of $S^*$ on $A$ by *-endomorphisms.
\end{rmk}

\begin{dfn}\label{covcan} Let $S$ be a left cancellative semigroup with an action $\alpha$ on a C*-algebra $A$. A \emph{covariant representation} (see \cite{Laca}) of the C*-dynamical system $(\alpha,S,A)$ is a pair $(\pi,T)$ in which 
\begin{enumerate}
	\item $\pi$ is a non-degenerate *-representation of $A$ on $H$,
	\item $T\colon S\to B(H)$ is a unital isometric representation of $S$,
	\item the covariance condition  $\pi(\alpha_s(a))=T_s\pi(a)T_s^*$ holds for every $a\in A, s\in S$.
		\end{enumerate}\end{dfn}

\begin{lma}\label{cancelcov} If $(\pi,T)$ is a covariant representation of an injective C*-dynamical system with ideal images $(\alpha,S,A)$ and $A$ is unital, then $T$ is an inverse representation of $S$.
\end{lma}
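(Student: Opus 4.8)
The plan is to verify directly that $T(S)\cup T(S)^*$ generates an inverse semigroup of partial isometries, which is exactly the definition of $T$ being an inverse representation. Following the criterion already used in the proof of \autoref{reprs}, it suffices to show that the source and range projections of the generators $T_s$ and $T_s^*$ all commute: once this holds, the product criterion for partial isometries guarantees that every iterated product of generators is again a partial isometry, all idempotents of the generated semigroup lie in the commutative lattice generated by these projections, and so \autoref{vagner} yields an inverse semigroup.

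First I would dispose of the source projections. Since $T$ is isometric, $T_s^*T_s=\mathrm{id}$ for every $s\in S$, so the source projection of each $T_s$ and the range projection of each $T_s^*$ equal the identity and commute with everything. Thus the only projections to control are the range projections $P_s:=T_sT_s^*$, and it remains to show that these commute pairwise.

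The key step is to identify these projections via the covariance condition. Because $A$ is unital and $\pi$ is non-degenerate we have $\pi(1)=\mathrm{id}$; setting $a=1$ in the covariance relation $\pi(\alpha_s(a))=T_s\pi(a)T_s^*$ then gives
$$P_s=T_sT_s^*=\pi(\alpha_s(1)).$$
Now $\alpha_s(1)$ is a projection, being the image of the projection $1$ under the $*$-homomorphism $\alpha_s$, and it is the unit of the ideal $\alpha_s(A)$: for $x=\alpha_s(a)$ one has $\alpha_s(1)x=\alpha_s(1\cdot a)=x=x\alpha_s(1)$. The decisive observation is that the unit $e$ of any closed two-sided ideal $J$ in a C*-algebra is \emph{central}: for $b\in A$ both $be$ and $eb$ lie in $J$, and since $e$ is a two-sided unit for $J$ we get $be=e(be)=ebe$ and $eb=(eb)e=ebe$, whence $be=eb$. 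Hence each $\alpha_s(1)$ is central in $A$.

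Consequently the projections $\alpha_s(1)$ commute pairwise, and applying the $*$-homomorphism $\pi$ shows that the range projections $P_s=\pi(\alpha_s(1))$ commute pairwise as well. Combined with the second paragraph, all source and range projections of the generators $T_s,T_s^*$ commute, so $T(S)\cup T(S)^*$ generates an inverse semigroup and $T$ is an inverse representation. The genuinely substantive point is the identification $T_sT_s^*=\pi(\alpha_s(1))$ together with the centrality of $\alpha_s(1)$, which is precisely where the hypotheses of unital $A$ and ideal images of $\alpha$ are used; the passage from ``commuting range projections of the generators'' to ``the generated semigroup is inverse'' is the only delicate bookkeeping, and I would handle it by invoking the product criterion for partial isometries and \autoref{vagner} exactly as in \autoref{reprs}.
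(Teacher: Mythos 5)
Your identification of the range projections, $T_sT_s^*=\pi(\alpha_s(1))$, and the observation that $\alpha_s(1)$ is the unit of the closed two-sided ideal $\alpha_s(A)$ and hence a \emph{central} projection of $A$, are both correct and are a clean way to handle the generators; this is where the unitality of $A$ and the ideal-image hypothesis enter, just as in the paper. However, the final step is a genuine gap: you pass from ``the range projections of the \emph{generators} commute'' to ``the semigroup generated by $T(S)\cup T(S)^*$ is inverse,'' and this implication is false in general. The paper itself warns of exactly this, in the paragraph introducing inverse representations: commuting range projections is not sufficient for a representation to be inverse, and \cite{A2} gives a counterexample already for an abelian semigroup. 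The idempotents of the generated semigroup are $ww^*$ for arbitrary words $w$, and these involve conjugates such as $T_t^*\pi(\alpha_s(1))T_t$, $T_tT_u^*\pi(\alpha_s(1))T_uT_t^*$, etc., which do not a priori lie in the commutative lattice generated by the $P_s=\pi(\alpha_s(1))$. Your claim that ``all idempotents of the generated semigroup lie in the commutative lattice generated by these projections'' is precisely what needs proof, and it is the entire content of the paper's argument.

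The paper closes this gap by an induction on the length of a monomial $x$, establishing $xx^*=\pi(\tilde{\alpha}(x)(1))$ and the two identities $\pi(a)x=xx^*\pi(a)x$ and $x\pi(a)=x\pi(a)x^*x$, using the ideal property of the images at every inductive step (not only for the generators); commutation of $xx^*$ with $yy^*$ then follows formally from these two identities. Your centrality observation could in fact be upgraded to an alternative proof — one would show by induction that for every word $w$ the element $\tilde{\alpha}(w)(1)$ is the unit of the closed two-sided ideal $E_w$, hence central in $A$, so that all $ww^*=\pi(\tilde{\alpha}(w)(1))$ lie in the image of the center of $A$ and commute — but that induction is the substance of the lemma and is absent from your write-up. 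As it stands, the argument proves commutativity only for the idempotents coming from single generators.
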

\begin{proof} It is sufficient to show that all idempotents in the semigroup $S_T$ generated by $T(S)\cup T(S)^*$ commute, i.e. $xx^*yy^*=yy^*xx^*$ for any monomials $x$ and $y$. Obviously, in the notation of \autoref{lma:acta}, $T_sT_s^*=\pi(\alpha_s(1))\in\pi(E_s)$. Due to the Remark \ref{unital} and the covariance condition, $\pi(\tilde{\alpha}(v_s^*)(a))=T_s^*\pi(a)T_s$. Generally, for any monomial $x\in S_T$ and $a\in A$ we have
\begin{equation} \label{a1} xx^*=\pi(\tilde{\alpha}(x)(1)),\end{equation}
\begin{equation} \label{a2} x\pi(a)x^*=\pi(\tilde{\alpha}(x)(a))\end{equation}

For $x=T_s^*$, $s\in S$ using the fact that images of $\tilde{\alpha}$ are ideals we deduce:
\begin{equation} \label{a3} T_s^*\pi(a)= T_s^*\pi(\tilde{\alpha}(s)(1)a)=T_s^*\pi(a\tilde{\alpha}(s)(1))=T_s^*\pi(a)T_sT_s^*,\end{equation}
\begin{equation}\label{a4} \pi(a)T_s=\pi(a\tilde{\alpha}(s)(1))T_s=\pi(\tilde{\alpha}(s)(1)a)T_s=T_sT_s^*\pi(a)T_s \end{equation}
For general type of monomial these formulas can be shown by induction on the length of monomial. Suppose the formulas hold for the length equal $n$ and take $x=yz$, where $y$ has length $n$ and $y$ is a generator. First assume $z=T_s$ and for $a\in A$ calculate
$$\pi(a)x=\pi(a)yT_s=yy^*\pi(a)yT_s=$$  
By (\ref{a2}), we have $y^*\pi(a)y\in\pi(A)$, and due to (\ref{a4})
$$y(y^*\pi(a)y)T_s=yT_sT_s^*y^*\pi(a)yT_s=xx^*\pi(a)x.$$
If $z=T_s^*$, by $T_s^*T_s=1$ we similarly have
$$\pi(a)x=\pi(a)yT_s^*=yy^*\pi(a)yT_s^*=yT_s^*T_sy^*\pi(a)yT_s^*=xx^*\pi(a)x.$$ 
Thus, for any monomial $x$ we have
\begin{equation} \label{a5} \pi(a)x =xx^*\pi(a)x.\end{equation} 

In the same way, splitting $x$ into $T_sy$ or $T_s^*y$ and using (\ref{a3}), we obtain
\begin{equation} \label{a6} x\pi(a) =x\pi(a)x^*x.\end{equation} 
Then for any monomials $x,y$ using that  $yy^*\in\pi(A)$ by (\ref{a1}), we get
$$xx^*yy^*\overset{(\ref{a6})}{=}xx^*yy^*xx^*\overset{(\ref{a5})}{=}yy^*xx^*$$
\end{proof}

\begin{dfn}\label{covinv} Let $P$ be an inverse semigroup with an action $\alpha$ on a C*-algebra $A$. A \emph{covariant representation} (see \cite{Sieben}) of the C*-dynamical system $(\alpha,P,A)$ is a pair $(\pi,T)$ in which 
\begin{enumerate}
	\item $\pi$ is a non-degenerate *-representation of $A$ on $H$,
	\item $T\colon P\to B(H)$ is a unital *-representation of $P$, such that  for every $s\in P$, $T_s^*T_sH=\pi(E_{s^*})H$ and $T_sT_s^*H=\pi(E_{s})H$
	\item the covariance condition  $\pi(\alpha_s(a))=T_s\pi(a)T_s^*$ holds for every $a\in E_{s^*}, s\in P$.
		\end{enumerate}
\end{dfn}

\begin{lma}\label{covrep} Let $\alpha$ be an injective action of a left cancellative semigroup $S$ on a C*-algebra $A$ and $\tilde{\alpha}$ the induced action of $S^*$ on $A$. Then there is a one-to-one correspondence between the covariant representations of $(\alpha, S, A)$ and $(\tilde{\alpha}, S^*, A)$. 
\end{lma}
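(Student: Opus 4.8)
The plan is to route the whole correspondence through \autoref{reprs}, which already gives a bijection between isometric inverse representations of $S$ and unital *-representations of $S^*$. Once $T$ and $\tilde T$ are matched on the semigroup level, the only remaining content is to check that the covariance condition and the range conditions translate correctly between \autoref{covcan} and \autoref{covinv}. So the proof splits into two directions plus a verification that they are mutually inverse.

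First I would treat the direction from $S^*$ to $S$, which is the easy one. Given a covariant representation $(\pi,\tilde T)$ of $(\tilde\alpha,S^*,A)$, set $T_s=\tilde T(v_s)$ for $s\in S$. Since $\tilde T$ is a unital *-representation and $v_s^*v_s=1$, each $T_s$ is an isometry and $T_{st}=\tilde T(v_sv_t)=T_sT_t$, so $T$ is a unital isometric representation of $S$. Because the induced action has $E_{v_s^*}=A$ (see \autoref{lma:acta}), the covariance condition of \autoref{covinv} at the generator $w=v_s$ reads $\pi(\alpha_s(a))=T_s\pi(a)T_s^*$ for \emph{all} $a\in A$, which is exactly the covariance of \autoref{covcan}. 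Hence $(\pi,T)$ is covariant for $(\alpha,S,A)$.

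For the reverse direction I would start from a covariant representation $(\pi,T)$ of $(\alpha,S,A)$. By \autoref{cancelcov}, $T$ is an isometric inverse representation of $S$, so by \autoref{reprs} it extends uniquely to a unital *-representation $\tilde T$ of $S^*$ with $\tilde T(v_s)=T_s$, whose image $\tilde T(S^*)$ is the inverse semigroup $S_T$ generated by $T(S)\cup T(S)^*$. The covariance condition for $S^*$ is then precisely formula (\ref{a2}) from the proof of \autoref{cancelcov}: writing a monomial as $x=\tilde T_w$ gives $x\pi(a)x^*=\pi(\tilde\alpha_w(a))$, and restricting $a$ to the domain $E_{w^*}$ yields the requirement of \autoref{covinv}. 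The range conditions follow from (\ref{a1}), namely $\tilde T_w\tilde T_w^*=\pi(\tilde\alpha_w(1))$: since an ideal of a unital algebra that possesses a unit has that unit central, $\tilde\alpha_w(1)$ is a central projection serving as a unit for the ideal $E_w=\tilde\alpha_w(A)$, so non-degeneracy of $\pi$ gives $\tilde T_w\tilde T_w^*H=\overline{\pi(E_w)H}$, and symmetrically $\tilde T_w^*\tilde T_wH=\overline{\pi(E_{w^*})H}$. Finally, the two assignments are mutually inverse by the uniqueness clause of \autoref{reprs}: restricting the constructed $\tilde T$ to the generators recovers $T$, while any covariant $\tilde T$ is the unique extension of its own restriction.

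The hard part will be the non-unital case, where \autoref{cancelcov} is not directly available and $\tilde\alpha_w(1)$ has no literal meaning. There I would pass to the multiplier algebra $M(A)$: since $\pi$ is non-degenerate it extends to $M(A)$, each $\tilde\alpha_w$ extends strictly to an isomorphism of the relevant ideals, and the symbol $\tilde\alpha_w(1)$ is to be read as the projection in $M(A)$ that is the unit of $\overline{E_w}$, obtained as a strict limit along an approximate unit. With these replacements the identities (\ref{a1})--(\ref{a6}) pass through verbatim, so that $T$ is again an inverse representation and the range conditions hold, while the covariance argument is unchanged. The one delicate point is to confirm that $\pi(\tilde\alpha_w(1))$ really is the orthogonal projection onto $\overline{\pi(E_w)H}$; this follows from non-degeneracy of $\pi$ together with the ideal property of $E_w$, and once it is in place the unital and non-unital cases are handled uniformly.
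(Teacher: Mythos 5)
Your overall architecture matches the paper's: both directions are routed through \autoref{reprs} and \autoref{cancelcov}, covariance is read off from (\ref{a2}), and the easy direction ($S^*$ to $S$) is identical. The genuine difference is how condition (2) of \autoref{covinv} is verified. For unital $A$ your argument is correct and in fact cleaner than the paper's: $\tilde{\alpha}_w(1)$ is the (automatically central) unit of the ideal $E_w$, so (\ref{a1}) together with non-degeneracy of $\pi$ identifies $\tilde{T}_w\tilde{T}_w^*H$ with $\overline{\pi(E_w)H}$ in one step. The paper instead proves the inclusion $\pi(E_w)H\subset\tilde{T}_w\tilde{T}_w^*H$ directly from covariance (its (\ref{e1})) and the reverse inclusion by induction on the length of the monomial $w$, approximating with $\pi(u_\lambda)$ along an approximate unit; that longer argument is exactly what allows the paper to handle non-unital $A$ while staying inside $B(H)$.

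The gap is in your non-unital paragraph. You propose to read $\tilde{\alpha}_w(1)$ as ``the projection in $M(A)$ that is the unit of $\overline{E_w}$, obtained as a strict limit along an approximate unit.'' Such a projection need not exist: an approximate unit of a closed two-sided ideal $J\subset A$ does not in general converge strictly in $M(A)$, and $M(A)$ need not contain any projection $p$ with $pA=J$ (take $A=C_0((0,2))$ and $J=C_0((0,1))$; the would-be projection is the indicator of $(0,1)$, which is not a multiplier of $A$). This is why the paper's \autoref{crossall} has to assume the action is \emph{extendible}, a hypothesis absent from the present lemma, so the identities (\ref{a1})--(\ref{a6}) do not pass through $M(A)$ ``verbatim.'' The repair is to take the limit not in $M(A)$ but in $B(H)$: by non-degeneracy of $\pi$, the net $\pi(\tilde{\alpha}_w(u_\lambda))$ converges in the strong operator topology to the orthogonal projection onto $\overline{\pi(E_w)H}$, and all computations can be tested against vectors $\pi(a)\xi$. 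That is precisely what the paper's induction with $\pi(u_\lambda)$ accomplishes, so your outline is repairable, but the non-unital case is not complete as written.
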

\begin{proof}

Let $(\pi,T)$ be a covariant representation of $(\alpha, S, A)$ on $H$. By Lemma \ref{reprs} and Lemma \ref{cancelcov}, $T$ induces a *-representation $\tilde{T}$ of $S^*$ on $H$ given by
$$\tilde{T}_{v_s}=T_s,\ \tilde{T}_{v_s^*}=T_{s}^*.$$ 
The condition (\ref{a2}) gives the condition (3) of the Definition \ref{covinv}.

Now we prove condition (2) of Definition (\ref{covinv}).  Let $v\in S^*, a\in E_{v^*}$ and $b=\tilde{\alpha}_v(a)$. Due to covariance condition, we have
\begin{equation} \label{e1}\pi(\alpha_v(a))=\tilde{T}_v \pi(a)\tilde{T}_v^*=\tilde{T}_v\tilde{T}_v^*\tilde{T}_v\pi(a)\tilde{T}_v^*=\tilde{T}_v\tilde{T}_v^*\pi(\alpha_v(a)).\end{equation}
Hence, $\pi(E_v)H\subset\tilde{T}_v\tilde{T}_v^*H$. 

We prove the reverse inclusion by induction on the length of $v$. First suppose $v=v_sw$, where $s\in S$, $w\in S^*$, and assume that the inclusion is proved for $w$.  It implies that for $x\in H$, the vector $\tilde{T}_v\tilde{T}^*_vx=\tilde{T}_{v_s}\tilde{T}_w\tilde{T}_w^*\tilde{T}_{v_s^*}x$ can be approximated by  $\sum_i\tilde{T}_{v_s}\pi(a_i)y_i$ for some $a_i\in E_w$ and $y_i\in H$. Hence, we obtain
$$\tilde{T}_v\tilde{T}^*_vx \approx \sum_i\tilde{T}_{v_s}\pi(a_i)\tilde{T}_{v_s}^*\tilde{T}_{v_s}y_i=\sum_i\pi(\alpha_s(a_i))\tilde{T}_{v_s}y_i\in \pi(E_{v_sw})H$$

Now suppose $v=v_s^*w$ for $s\in S$, $w\in S^*$, and assume that the inclusion is proved for $w$. Similarly the vector $\tilde{T}_v\tilde{T}^*_vx=\tilde{T}_{v_s}^*\tilde{T}_w\tilde{T}_w^*\tilde{T}_{v_s}x$ is approximated by $\sum_i\tilde{T}_{v_s}^*\pi(a_i)y_i$ for some $a_i\in E_w$ and $y_i\in H$.  
Denote by $u_\lambda$ the approximate unit of $A$. Due to the fact that $\pi$ is a non-degenerate representation of $A$, $\pi(u_\lambda)$ converges to the identity operator on $H$ in the strong operator topology, i.e. $y\approx\pi(u_\lambda)y$ for any $y\in H$. Then we obtain
$$\tilde{T}_v\tilde{T}^*_vx=\sum_i\tilde{T}_{v_s}^*\tilde{T}_{v_s}\tilde{T}_{v_s}^*\pi(a_i)y_i\approx \sum_i\tilde{T}_{v_s}^*\tilde{T}_{v_s}\pi(u_\lambda)\tilde{T}_{v_s}^*\pi(a_i)y_i=$$
$$ \sum_i\tilde{T}_{v_s}^*\pi(\alpha_s(u_\lambda))\pi(a_i)y_i=\sum_i\tilde{T}_{v_s}^*\pi(\alpha_s(b_{i,\lambda}))y_i,$$
where $b_{i,\lambda}=\alpha_s(u_\lambda)a_i\in E_{v_s}\cap E_w$. Since $b_{i,\lambda}\in E_{v_s}$, using (\ref{e1}) we get
$$\sum_i\tilde{T}_{v_s}^*\pi(b_{i,\lambda})y_i= \sum_i\tilde{T}_{v_s}^*\pi(b_{i,\lambda})\tilde{T}_{v_s}\tilde{T}_{v_s}^*y_i= \sum_i\pi(\tilde{\alpha}_{v_s}^*(b_{i,\lambda}))\tilde{T}_{v_s}^*y_i,$$
which belongs to $\pi(E_{v_s^*w})H $ due to definition of $E_{v_s^*w}$. Thus, $(\pi,\tilde{T})$ is a covariant representation of $(\tilde{\alpha}, S^*, A)$. 

If $(\pi,\tilde{T})$ is any covariant representation of $(\tilde{\alpha}, S^*, A)$, then $T_s=\tilde{T}_{v_s}$ gives a unital inverse representation of $S$ by Lemma \ref{reprs}. Since $\alpha$ is just a restriction of $\tilde{\alpha}$, the covariance condition also holds.
\end{proof}

\begin{rmk}\label{covrep2} The reverse statement to the previous Lemma also holds. Let $S$ be a left cancellative semigroup and $\alpha$ be an action of the universal inverse semigroup $S^*$ on a C*-algebra $A$, $\tilde{\alpha}$ the induced action of $S$ on $A$. Then there is a correspondence between the covariant representations of $(\alpha, S^*, A)$ and $(\tilde{\alpha}, S, A)$.  The proof is the same as above.
\end{rmk}

\begin{pgr} We now recall the definitions of crossed products by partial automorphisms of inverse semigroups and crossed products by injective actions of cancellative semigroups. For the case of inverse semigroups acting on (in general non-unital) C*-algebras  by partial automorphisms there is a well-established definition, given in \cite{Sieben}, \cite{Exel}, \cite{Exel2}. 

For cancellative semigroups there exist several constructions of the crossed product, by automorphisms (\cite{Li}) and by endomorphisms (\cite{Laca}), both of them for unital C*-algebras. Though an automorphism is a particular case of an endomorphism, the construction of the crossed product by an automorphism is not a particular case of the one by an endomorphism. 

The main difference of the constructions is that in the case of an automorphism the crossed product contains  the whole semigroup C*-algebra, while the crossed product by an endomorphism contains only isometries corresponding to the elements of the semigroup. A connection between crossed products by automorphisms and crossed products by inverse semigroups is given in Proposition 5.7 of \cite{Li2}. It describes the isomorphism between  $A\rtimes^a_\alpha S$  and $(A\otimes D)\rtimes_\beta  \tilde{S}$ in the case when $S$ is a subsemigroup of a group. Here $D$ is the canonical commutative C*-subalgebra in $C^*(S)$, and  $\tilde{S}$ is some specific *-homomorphic image of $S^*$.  Thus, the subject of the present paper is the notion of crossed product by an endomorphism of $S$. 
\end{pgr}

\begin{dfn} Let $\alpha$ be an action of a left cancellative semigroup $S$ on a unital C*-algebra $A$. The crossed product associated to the C*-dynamical system $(\alpha,S,A)$ is a C*-algebra $A\rtimes_{\alpha} S$ with a unital *-homomorphism $i_A\colon A\to A\rtimes_{\alpha} S$ and an isometric representation $i_S\colon S\to A\rtimes_{\alpha} S$  such that 

\begin{enumerate}
	\item $(i_A,i_S)$ is a covariant representation for $(\alpha,S,A)$,
	\item for any other covariant representation $(\pi,T)$ there is a representation $\pi\times T$ of $A\rtimes_{\alpha} S$ such that $\pi=(\pi\times T)\circ i_A$ and $T=(\pi\times T)\circ i_S$,
	\item $A\rtimes_{\alpha} S$ is generated by $i_A(A)$ and $i_S(S)$ as a C*-algebra.
\end{enumerate}
\end{dfn}
As noticed in \cite{Laca}, the crossed product is non-trivial if $S$ acts by injective endomorphisms. $A\rtimes_{\alpha} S$ can be defined as a C*-algebra generated by monomials $aw_s$ and $(aw_s)^*=w_s^*a^*$  where $a\in i_A(A)$, and $w_s=i_S(s)$. The completion is taken with respect to the norm given by the supremum of $||(\pi\times T)(x)||$ over all covariant representations $(\pi,T)$. Obviously, for any covariant representations $(\pi,T)$, the representation $\pi\times T$ of $A\rtimes_{\alpha} S$ is given by 
$$(\pi\times T)(w_t^* a w_s)=T_t^*\pi(a) T_s.$$
Note that since $A$ and $S$ are unital, the elements $w_s$ generate a semigroup isomorphic to $S^*$ and we can change the notation $w_s$ to $v_s$.

\begin{dfn}\label{crossinv} Let $\alpha$ be an action of an inverse semigroup $P$ on a C*-algebra $A$. Denote by $L$ the linear space of finite sums $\sum_{s\in P} a_s\delta_s$ where $a_s\in E_s$ and $\delta_s$ is a formal symbol. Multiplication and involution are defined in the following way. 
$$(a\delta_s)(b\delta_t)=\alpha_s(\alpha_{s^*}(a)b)\delta_{ts},\ (a\delta_s)^*=\alpha_{s^*}(a^*)\delta_{s^*}.$$
For any covariant representation $(\pi, T)$ define a non-degenerate *-rep\-re\-sen\-ta\-tion of $L$:
$$(\pi\times T)(\sum_{s\in P}a_s\delta_s)=\sum_{s\in P}\pi(a_s)T_s$$
 The crossed product $A\rtimes_{\alpha} P$ is the Hausdorff completion of $L$ in the norm 
$$||x ||=\mathrm{sup}_\Pi || \Pi(x) ||,$$
where the supremum is taken over all representations of $L$ of the form $\Pi=\pi\times T$ for all covariant representations $(\pi,T)$.
  \end{dfn}

\begin{thm}\label{thm:tcross} Let $A$ be a unital C*-algebra and let $\alpha$ be an injective action with ideal images of $S$ on $A$, $\tilde{\alpha}$ be an action of $S^*$ on $A$, where one of them is induced by another. Then the crossed product C*-algebras $A\rtimes_\alpha S$ and $A\rtimes_{\tilde{\alpha}} S^*$ are isomorphic. 
\end{thm}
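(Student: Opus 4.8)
The plan is to exploit the fact that both crossed products are \emph{universal} objects for covariant representations of their respective dynamical systems, and that \autoref{covrep} (together with its converse \autoref{covrep2}) already supplies a bijection between the covariant representations of $(\alpha,S,A)$ and those of $(\tilde\alpha,S^*,A)$ under which $\tilde{T}_{v_s}=T_s$. Since the norm on $A\rtimes_\alpha S$ is the supremum of $\|(\pi\times T)(\,\cdot\,)\|$ over covariant pairs $(\pi,T)$, and the norm on $A\rtimes_{\tilde\alpha}S^*$ (\autoref{crossinv}) is the analogous supremum over pairs $(\pi,\tilde{T})$, this bijection is exactly what is needed to identify the two universal norms once the generators are matched. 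I would therefore construct mutually inverse $*$-homomorphisms $\Psi\colon A\rtimes_{\tilde\alpha}S^*\to A\rtimes_\alpha S$ and $\Phi\colon A\rtimes_\alpha S\to A\rtimes_{\tilde\alpha}S^*$ and check that they are identity on generators.

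To build $\Psi$, realise $A\rtimes_\alpha S$ faithfully on a Hilbert space $H_0$. The canonical covariant pair $(i_A,i_S)$ for $(\alpha,S,A)$ is then a covariant representation on $H_0$, and by \autoref{covrep} it induces a covariant representation $(i_A,\widetilde{i_S})$ of $(\tilde\alpha,S^*,A)$ on $H_0$ with $\widetilde{i_S}(v_s)=i_S(s)$. The associated representation $i_A\times\widetilde{i_S}$ of $A\rtimes_{\tilde\alpha}S^*$ has image generated by $\pi(A)=i_A(A)$ and the operators $\widetilde{i_S}(v_s)=i_S(s)$, all lying inside $A\rtimes_\alpha S\subseteq B(H_0)$; this defines $\Psi$, and it is surjective because $\Psi(a\delta_1)=i_A(a)$ and $\Psi(\alpha_s(1)\delta_{v_s})=i_A(\alpha_s(1))i_S(s)=i_S(s)$, the latter using $i_S(s)i_S(s)^*=i_A(\alpha_s(1))$. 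Dually, realising $A\rtimes_{\tilde\alpha}S^*$ faithfully and applying the converse correspondence \autoref{covrep2} to its canonical covariant representation of $(\tilde\alpha,S^*,A)$ produces a covariant representation of $(\alpha,S,A)$ whose integrated form is $\Phi$. Here it is convenient to note that because $A$ is unital and the action is unital, the element $1_A\delta_1$ is a two-sided unit of the $*$-algebra $L$, so $A\rtimes_{\tilde\alpha}S^*$ is itself unital and no passage to a multiplier algebra is required to house the isometries $i_S(s)$.

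Finally I would verify $\Psi\circ\Phi=\mathrm{id}$ and $\Phi\circ\Psi=\mathrm{id}$ by evaluating on the generating sets: on $i_A(A)$ and $i_S(S)$ for $A\rtimes_\alpha S$, and on the spanning elements $a\delta_v$ ($a\in E_v$, $v\in S^*$) for $A\rtimes_{\tilde\alpha}S^*$; since $\{v_s\}$ generate a copy of $S^*$ inside $A\rtimes_\alpha S$, every such $a\delta_v$ is accounted for. The main obstacle I expect is bookkeeping rather than a deep difficulty: one must check that the covariance condition of \autoref{covcan} transports correctly to that of the inverse-semigroup system and back, which amounts to repeating the manipulations of identities of the type $x\pi(a)=x\pi(a)x^*x$ and $\pi(a)x=xx^*\pi(a)x$ established in the proof of \autoref{covrep}, and to confirming that the two integrated representations agree on generators so that the universal norms coincide. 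A secondary point to handle with care is that both canonical covariant representations generate the \emph{whole} crossed product (the "generated by" clauses), which guarantees that $\Psi$ and $\Phi$ are genuinely surjective and hence, being mutually inverse on generators, isomorphisms.
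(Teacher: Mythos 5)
Your proposal is correct and follows essentially the same route as the paper: both arguments rest on the bijection of covariant representations from \autoref{covrep} and \autoref{covrep2} to identify the two universal norms, and then match the generators (with $v_s\leftrightarrow\tilde{\alpha}_{v_s}(1)\delta_{v_s}$, using unitality of $A$ to avoid multipliers). The only difference is packaging: the paper writes down the explicit $*$-isomorphism $\phi(ax)=a\delta_x$ between the dense $*$-algebras of monomials and verifies multiplicativity and involution directly, which is exactly the ``bookkeeping'' you defer to the final verification on generators.
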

\begin{proof} 
Due to  Lemma \ref{covrep} and  Remark \ref{covrep2}, it is sufficient to give a *-isomorphism of the underlying *-algebras of monomials, $K$ for $A\rtimes_\alpha S$ and $L$ for $A\rtimes_{\tilde{\alpha}} S^*$ respectively.

First note that any monomial in $K$ can be written as $av_{s_1}^*v_{s_2}...v_{s_n}$, where $a\in E_{v_{s_1}^*v_{s_2}...v_{s_n}}$ in the notation of \autoref{lma:acta}. Indeed, using the covariance condition and the assumption that the images of the endomorphisms are ideals, we obtain for any $a\in A$, $s\in S$:

$$av_s=av_sv_s^*v_s=a\alpha_s(1) v_s, $$ 
$$v_s^*a= v_s^*\alpha_s(1)a=v_s^* \alpha_s(\alpha_s^*(\alpha_s(1)a))=\alpha_s^*(\alpha_s(1)a)v_s^*,$$
$$v_sa=\alpha_s(a)v_s.$$
As noticed in the proof of \autoref{lma:acta}, we may assume that $\tilde{\alpha}(v_s^*)(a)=\alpha_s^*(\alpha_s(1)a)$ for any $a\in A$, remembering that $\tilde{\alpha}(v_s^*)$ is an isomorphism only on $E_{v_s}$. Therefore, for any monomial $x=v_{s_1}^*v_{s_2}...v_{s_n}$ and $a\in A$ we obtain
$$xa=\tilde{\alpha}(x)(a)x.$$

Define $\phi\colon K\to L$ on generators by $a\to a\delta_1$, $v_s\to \tilde{\alpha}(v_s)(1)\delta_{v_s}$ and on an arbitrary monomial $x$  and $ a\in E_{x}$ by
$$\phi(ax)=a\delta_x.$$
Extend $\phi$ linearly to $K$. Then clearly, $\phi(K)=L$. To show that $\phi$ is multiplicative, calculate the product of arbitrary monomials $av_{t_1}^*v_{t_2}...v_{t_n}$ and $bv_{s_1}^*v_{s_2}...v_{s_k}$, where $a\in E_{x}$, $b\in E_y$, $x=v_{t_1}^*v_{t_2}...v_{t_n}$, $y=v_{s_1}^*v_{s_2}...v_{s_k}$:
$$axby=a\tilde{\alpha}(x)(b)xy=\tilde{\alpha}(x)(\tilde{\alpha}(x^*)(a) b) xy. $$
Therefore, due to the Definition \ref{crossinv} $$\phi(axby)=\phi(\tilde{\alpha}(x)(\tilde{\alpha}(x^*)(a) b) xy)=\tilde{\alpha}(x)(\tilde{\alpha}(x^*)(a) b)\delta_{xy}=(a\delta_x)(b\delta_y).$$
In the same way we verify that $\phi$ preserves involution:
$$\phi(ax)^*=(a\delta_x)^*=\alpha(x^*)(a^*)\delta_{x^*}=\phi(\alpha(x^*)(a^*)x^*)=\phi(x^*a^*).$$
Thus, $\phi$ is a *-isomorphism onto $L$.
\end{proof}

\begin{cor} There exist an injective action $\tilde{\beta}$ of $S$ on $C^*(E)$, where $E$ is the semigroup  of idempotents in $S^*$, and an action $\tilde{\alpha}$ of $S^*$ on $E_\mathcal{J}=\{e_X| X\in\mathcal{J}\}$. With respect to thess actions,
 $$C^*(S^*)\cong C^*(E)\rtimes_{\tilde{\beta}}S,\ C^*(S)\cong C^*(E_\mathcal{J})\rtimes_{\tilde{\alpha}} S^*.$$ \end{cor}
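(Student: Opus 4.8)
The plan is to deduce both isomorphisms from one structural fact---that the full C*-algebra of a unital inverse semigroup $P$ is the crossed product of the C*-algebra of its idempotent semilattice $E(P)$ by the canonical action of $P$ on it---and then to convert the acting semigroup between $S$ and $S^*$ using \autoref{lma:acta} and \autoref{thm:tcross}.

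For the first isomorphism I would begin with the canonical action $\tau$ of $S^*$ on $C^*(E)$, where $\tau_s$ carries the ideal of $C^*(E)$ generated by $s^*s$ onto the ideal generated by $ss^*$ via $e\mapsto ses^*$. Since $S^*$ is unital, $E$ has a greatest element, $C^*(E)$ is unital, and $\tau_1=\mathrm{id}$, so $\tau$ is a unital action. Matching *-representations of $S^*$ with covariant representations of $(\tau,S^*,C^*(E))$ in the sense of \autoref{covinv}, through the universal property of \autoref{crossinv}, gives $C^*(S^*)\cong C^*(E)\rtimes_\tau S^*$. Because $\tau$ is unital, \autoref{lma:acta} yields an injective action $\tilde\beta$ of $S$ with ideal images on $C^*(E)$, determined by $\tilde\beta_s=\tau_{v_s}$, and \autoref{thm:tcross} then turns the $S^*$-crossed product into the $S$-crossed product, so that $C^*(S^*)\cong C^*(E)\rtimes_\tau S^*\cong C^*(E)\rtimes_{\tilde\beta}S$, which is the first claim.

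For the second isomorphism I would use $C^*(S)=C^*(W)$ from \autoref{pgr:fullcancel}, where $W$ has idempotent semilattice $E_\mathcal{J}$ and $D:=C^*(E_\mathcal{J})$ is the commutative diagonal with unit $e_S$. Define $\beta$ on $D$ by $\beta_s(e_X)=w_se_Xw_s^*=e_{sX}$; each $\beta_s$ is injective, since $w_s^*\beta_s(d)w_s=d$ by $w_s^*w_s=1$, and has image the ideal $e_{sS}D$, so $\beta$ is an injective action of $S$ with ideal images. The key step is the identification $C^*(W)\cong D\rtimes_\beta S$: a *-representation of $W$ is precisely a non-degenerate representation $\pi$ of $D$ together with a unital isometric representation $T$ of $S$ obeying $T_s\pi(e_X)T_s^*=\pi(e_{sX})$, which is exactly the data of a covariant representation of $(\beta,S,D)$, and the two universal norms agree. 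Finally \autoref{lma:acta} supplies the induced unital action $\tilde\alpha$ of $S^*$ on $D$, and \autoref{thm:tcross} gives $D\rtimes_\beta S\cong D\rtimes_{\tilde\alpha}S^*$, whence $C^*(S)\cong C^*(E_\mathcal{J})\rtimes_{\tilde\alpha}S^*$.

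The main obstacle I expect is establishing the two base decompositions, since everything after them is a mechanical use of \autoref{lma:acta} and \autoref{thm:tcross}. For $C^*(W)\cong D\rtimes_\beta S$ the comparison of universal properties is clean; the one point to watch is that non-degeneracy of $\pi$ together with $e_S=1$ in $W$ forces $\pi$ to be unital, so that $e_S$ is correctly sent to the identity. For $C^*(S^*)\cong C^*(E)\rtimes_\tau S^*$, which is the general fact $C^*(P)\cong C^*(E(P))\rtimes P$, the care needed is to check that the source and range conditions of \autoref{covinv} for $\tau$ coincide with the projections $T_s^*T_s$ and $T_sT_s^*$ arising from an arbitrary *-representation $T$ of $S^*$; this holds because these projections are images of idempotents of $S^*$ and hence lie in $\pi(C^*(E))$.
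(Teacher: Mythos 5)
Your proposal is correct and follows essentially the same route as the paper: establish the two base decompositions $C^*(S^*)\cong C^*(E)\rtimes S^*$ and $C^*(S)\cong C^*(E_\mathcal{J})\rtimes S$, then transfer the acting semigroup with \autoref{lma:acta} and \autoref{thm:tcross}. The only difference is that you sketch direct proofs of the two base facts, whereas the paper simply cites Proposition 4.11 of Sieben for the first and Lemma 2.14 of Li for the second.
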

\begin{proof}
For any inverse semigroup $P$ there exists an action $\beta$ by partial bijections on its subsemigroup $E$ of idempotents. Namely, for $x\in P$, the domain of $\beta_x$ is $D_{x^*}=\{f\colon f=x^*xf\}$, and $\beta_x(f)=xfx^*$. This action extends to an action $\beta$ of $P$ on the commutative C*-algebra $C^*(E)$ and by Proposition 4.11 of \cite{Sieben}, $C^*(P)$ is isomorphic to the crossed product $C^*(E)\rtimes_\beta P$. Take $P=S^*$ and see that $\beta$ is unital and $\beta_{v_s}$ is injective since $v_s$ are isometries for $s\in S$, and the images of the extension of $\beta$ to $C^*(E)$ are closed ideals. Then by \autoref{lma:acta} we get an action $\tilde{\beta}$ of $S$ on  $C^*(E)$ by injective endomorphisms with ideal images. Applying \autoref{thm:tcross} we obtain
$$C^*(S^*)\cong C^*(E)\rtimes_\beta S^*\cong C^*(E)\rtimes_{\tilde{\beta}} S.$$
By Lemma 2.14 in \cite{Li}, $C^*(S)\cong C^*(E_\mathcal{J})\rtimes_\alpha S$, where $\alpha$ is an injective action given on generators by $\alpha_s(e_X)=e_{sX}$ for $s\in S, X\in\mathcal{J}$. Therefore, by \autoref{lma:acta} $\alpha$ generates an action $\tilde{\alpha}$ of $S^*$ on $C^*(E_\mathcal{J})$. Using again \autoref{thm:tcross}, we obtain the required isomorphism.
\end{proof}

\begin{rmk} The isomorphism $C^*(S)\cong C^*(E_\mathcal{J})\rtimes_\alpha S$ of \cite{Li} mentioned above could be deduced directly from the fact that $C^*(S)$ is a C*-algebra of an inverse semigroup $W$, which is a quotient of $S^*$,  using \autoref{thm:tcross}. \end{rmk}

\begin{cor} Let $S$  be a left Ore semigroup generating a group $G=S^{-1}S$ and $\alpha$ be a unital action of $S^*$ on a C*-algebra $A$. Then there exists a unique up to isomorphism C*-dynamical system $(B,G,\beta)$, where $\beta$ is an action of $G$ by automorphisms of a C*-algebra $B$ and an embedding $i\colon A\hookrightarrow B$ such that
\begin{enumerate}
	\item $\beta$ dilates $\alpha$, i.e. $\beta_s\circ i=i\circ \alpha_{v_s}$ for all $s\in S$,
	\item $\cup_{s\in S} \beta_s^{-1}(i(A))$ is dense in $B$,
	\item $A\rtimes_\alpha S^*$ is isomorphic to $i(1)(B\rtimes_\beta G)i(1)$, which is a full corner.
\end{enumerate}
\end{cor}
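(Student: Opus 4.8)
The plan is to reduce the assertion about the inverse–semigroup crossed product to one about an endomorphism crossed product of $S$, and then to invoke the dilation theory of Laca \cite{Laca} for Ore semigroups. First I would use \autoref{lma:acta} to replace the given unital action $\alpha$ of $S^*$ by the associated injective action $\tilde\alpha$ of $S$ by endomorphisms with ideal images, where $\tilde\alpha_s=\alpha_{v_s}$. Since condition (3) refers to $i(1)$, the algebra $A$ is unital, so each $\tilde\alpha_s$ is an injective *-endomorphism of $A$, and \autoref{thm:tcross} supplies a canonical isomorphism $A\rtimes_\alpha S^*\cong A\rtimes_{\tilde\alpha}S$. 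Thus it suffices to dilate the injective endomorphism system $(A,S,\tilde\alpha)$ to a group system and to realise $A\rtimes_{\tilde\alpha}S$ as the stated corner.

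For the dilation I would build $B$ as a direct limit. Order $S$ by $s\preceq t$ if and only if $t\in Ss$; the Ore hypothesis $Ss\cap St\neq\emptyset$ from \autoref{ore} is exactly what makes $(S,\preceq)$ directed. Put $A_s=A$ for every $s$ and, for $t=us$, let the connecting map $A_s\to A_t$ be the injective endomorphism $\tilde\alpha_u$; associativity of $\tilde\alpha$ together with right cancellation of $S$ make this a well-defined direct system. Set $B=\varinjlim_s A_s$ and let $i\colon A=A_1\to B$ be the canonical map, which is injective because the connecting maps are. The canonical maps $j_s\colon A_s\to B$ then satisfy $j_s=\beta_s^{-1}\circ i$ for a unique family of *-automorphisms $\beta_s$ of $B$, $s\in S$, and these assemble into an action $\beta$ of $G=S^{-1}S$. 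By construction $\beta_s\circ i=i\circ\tilde\alpha_s=i\circ\alpha_{v_s}$, giving (1); and $\beta_{s^{-1}}(i(A))=j_s(A_s)$, with the inclusion $\beta_{s^{-1}}(i(A))\subseteq\beta_{(us)^{-1}}(i(A))$ realised by $\tilde\alpha_u$, so that $\bigcup_s\beta_{s^{-1}}(i(A))$ is precisely the directed union defining $B$ and is therefore dense, giving (2).

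The main work is (3): identifying $A\rtimes_{\tilde\alpha}S$ with the corner $i(1)\,(B\rtimes_\beta G)\,i(1)$. I would send a covariant pair $(\pi,T)$ of $(A,S,\tilde\alpha)$ to a covariant pair of $(B,G,\beta)$ compressed by $i(1)$, and conversely compress a representation of $B\rtimes_\beta G$ by the projection $i(1)$, checking on monomials $a\,v_{s_1}^*\cdots v_{s_n}$ that the two crossed–product norms agree; this is Laca's dilation isomorphism. Fullness of the corner follows because $B=\overline{\bigcup_s\beta_{s^{-1}}(i(A))}$ and each $\beta_{s^{-1}}(i(1))=u_{s^{-1}}\,i(1)\,u_s$ lies in the closed ideal generated by $i(1)$, whence that ideal contains all of $B$ and hence all of $B\rtimes_\beta G$. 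Finally, for uniqueness I would show that any second dilation $(B',\beta',i')$ satisfying (1) and (2) gives a well-defined isometric map $\beta_{s^{-1}}(i(a))\mapsto\beta'_{s^{-1}}(i'(a))$ on the dense subalgebra $\bigcup_s\beta_{s^{-1}}(i(A))$, using (1) to reduce any coincidence of representatives to an identity in $A$ and using injectivity of $i$ and of $\tilde\alpha$ to transport it to $B'$; this extends to a $G$-equivariant isomorphism $B\to B'$ intertwining $i$ and $i'$.

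I expect the delicate points to be the well-definedness of the $G$-action $\beta$ (independence of the chosen expression $s^{-1}t$ of a group element, where the Ore condition is used a second time) and the verification of the full-corner identification in (3); both are handled by Laca's results in \cite{Laca}, to which the direct-limit construction above has been tailored, so that the contribution here is the reduction of the $S^*$-crossed product to an $S$-crossed product via \autoref{lma:acta} and \autoref{thm:tcross}.
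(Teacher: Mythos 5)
Your proposal is correct and follows the same route as the paper, whose entire proof is to combine \autoref{thm:tcross} (reducing $A\rtimes_\alpha S^*$ to the endomorphism crossed product $A\rtimes_{\tilde\alpha}S$) with Theorem 2.4 of \cite{Laca}. The additional material you supply — the direct-limit construction of $(B,G,\beta)$, the corner identification, and the uniqueness argument — is an unpacking of Laca's theorem rather than a different method.
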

\begin{proof} Use \autoref{thm:tcross} and apply Theorem 2.4 in \cite{Laca}.
\end{proof}

\begin{pgr} The crossed product with a non-unital C*-algebra is defined using the multiplier algebra (see \cite{Larsen} for details), but the notion of the covariant representation is the same as in Definition \ref{covcan}. For an extendible *-homomorphism between C*-algebras $\phi\colon A\to M(B)$, its unique strictly continuous extension is denoted $\overline{\phi}\colon M(A)\to M(B)$.\end{pgr}

\begin{dfn} Let $(\alpha,S,A)$ be a C*-dynamical system, where $A$ is non-unital and $S$ is a semigroup. A crossed product for $(\alpha,S,A)$ is a C*-algebra $B$ denoted $A\rtimes_{\alpha}S$ with a proper homomorphism $i_A\colon A\to B$ and a semigroup homomorphism $i_S\colon S\to \mathrm{Isom}(M(B))$ such that
\begin{enumerate}
	\item $(i_A,i_S)$ is a covariant representation for $(\alpha,S,A)$,
	\item for any other covariant representation $(\pi,T)$ there is a non-de\-ge\-ne\-rate representation $\pi\times T$ of $A\rtimes_{\alpha} S$ such that $$\pi=(\pi\times T)\circ i_A \mbox{ and } T=(\overline{\pi\times T})\circ i_S,$$
	\item $A\rtimes_{\alpha} S$ is generated by $\{i_A(a)i_S(s)|\ a\in A,\ s\in S\}$ as a C*-algebra.
 
\end{enumerate}

\end{dfn}

\begin{thm}\label{crossall}  Let $A$ be a non-unital C*-algebra and let $\alpha$ be an extendible injective action of $S$ on $A$, $\beta$ be an extendible action of $S^*$ on $A$, where one of them is induced by another. Then the crossed product C*-algebras $A\rtimes_\alpha S$ and $A\rtimes_{\beta} S^*$ are isomorphic. 
\end{thm}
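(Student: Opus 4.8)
The plan is to reduce \autoref{crossall} to the correspondence of covariant representations already in hand, and then to invoke the two universal properties, exactly as in the proof of \autoref{thm:tcross} but now taking care of the multiplier algebra. First I would observe that the one-to-one correspondence between covariant representations of $(\alpha,S,A)$ and of $(\beta,S^*,A)$ supplied by \autoref{covrep} and \autoref{covrep2} never uses unitality of $A$: its proof runs through an approximate unit $u_\lambda$ of $A$ and the non-degeneracy of $\pi$, and the range conditions of \autoref{covinv} are verified precisely in that approximate form. Hence for non-unital $A$ we still obtain a bijection $(\pi,T)\leftrightarrow(\pi,\tilde T)$ with $\tilde T_{v_s}=T_s$, intertwining the two notions of covariance from \autoref{covcan} and \autoref{covinv}.

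Next, since both crossed products are universal for covariant representations (the non-unital crossed product in the sense of \cite{Larsen} and the inverse-semigroup crossed product of \autoref{crossinv}), I would produce mutually inverse $*$-homomorphisms directly. The canonical covariant representation $(i_A,i_{S^*})$ of $(\beta,S^*,A)$ into $M(A\rtimes_\beta S^*)$ restricts along $s\mapsto v_s$ to a covariant representation of $(\alpha,S,A)$ by \autoref{covrep2}; by the universal property of $A\rtimes_\alpha S$ it integrates to a $*$-homomorphism $\Phi\colon A\rtimes_\alpha S\to M(A\rtimes_\beta S^*)$, and since its image is generated by the monomials $i_A(a)i_{S^*}(v_s)$ it lands in and generates $A\rtimes_\beta S^*$. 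Symmetrically, \autoref{covrep} turns the canonical covariant representation of $(\alpha,S,A)$ into one of $(\beta,S^*,A)$ and yields $\Psi\colon A\rtimes_\beta S^*\to A\rtimes_\alpha S$. Evaluating $\Phi$ and $\Psi$ on the generating monomials $i_A(a)i_S(s)$ and $a\delta_{v_s}$ and using $\tilde T_{v_s}=T_s$ shows that they are mutually inverse, giving the isomorphism. Equivalently, one may build the $*$-isomorphism $\phi$ of the dense $*$-subalgebras directly as in \autoref{thm:tcross}, sending a normal-form monomial $i_A(a)\,i_S(x)$ with $x\in S^*$ and $a\in E_x$ to $a\delta_x$.

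The main obstacle is the bookkeeping with the multiplier algebra forced by non-unitality. Because the isometries $i_S(s)$ now live in $M(A\rtimes S^*)$ rather than in the algebra itself, I cannot use the element $\alpha_s(1)$; instead I must pass to the extendible extensions $\overline{\alpha}_s,\overline{\beta}_s$ and to the strictly continuous extension $\overline{i_A}$, and rewrite an arbitrary monomial into the normal form $i_A(a)\,i_S(x)$ via the identity $x\,i_A(a)=i_A(\overline{\beta}_x(a))\,x$, in which $\overline{\beta}_{v_s}(1)\in M(A)$ plays the role of the range projection $T_sT_s^*$. Verifying that such products remain inside $A\rtimes_\beta S^*$ (and not merely in its multiplier algebra), that $\Phi$ and $\Psi$ are proper and non-degenerate so that the universal property of \cite{Larsen} applies, and that the approximate-unit estimates of \autoref{covrep} close up, is the delicate point; once these are settled the remaining computations are identical to those carried out in \autoref{thm:tcross}.
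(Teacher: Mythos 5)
Your route is essentially sound and arrives at the same isomorphism, but it is organized differently from the paper's. The paper does not argue via universal properties of the two crossed products at all: it extends $\alpha$ and $\beta$ to actions $\overline{\alpha}$, $\overline{\beta}$ of $S$ and $S^*$ on the unital algebra $M(A)$ (checking that $\overline{\alpha}(s)(M(A))=M(E_{v_s})$ is again an ideal), realizes $A\rtimes_\alpha S$ and $A\rtimes_\beta S^*$ as the closed ideals of $M(A)\rtimes_{\overline{\alpha}}S$ and $M(A)\rtimes_{\overline{\beta}}S^*$ spanned by monomials $i_{M(A)}(a)x$ with $a\in A$, applies the already-proved unital case (\autoref{thm:tcross}) to $M(A)$ verbatim, and then checks that the resulting isomorphism $\phi$ matches up the two ideals. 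This buys a clean reduction: all the normal-form computations with range projections are done once, in the unital setting, and only the identification of the ideals requires new work. Your version instead redoes the dense-subalgebra computation (or the universal-property argument) directly over $A$, carrying the multipliers $\overline{\beta}_{v_s}(1)$ along by hand; that is workable but leaves more to verify, and you flag exactly the right delicate points (propriety and non-degeneracy of $\Phi$, $\Psi$, and that products land in the crossed product rather than its multiplier algebra). One small correction: \autoref{covrep} does not avoid unitality as cleanly as you claim, since its first step invokes \autoref{cancelcov}, whose proof uses $\pi(\alpha_s(1))$ and hence the unit of $A$; the repair is precisely the passage to $\overline{\pi}$ and $\overline{\alpha}_s(1)\in M(A)$ that you describe later, so your plan is consistent, but the statement that the correspondence ``never uses unitality'' should be weakened to ``extends to the non-unital case after passing to multiplier algebras.'' Also note that the universal covariant representation $(i_A,i_{S^*})$ of the Sieben-type crossed product in \autoref{crossinv} is not part of the paper's definition and would itself need to be constructed before your $\Phi$ can be integrated; the paper's ideal-in-$M(A)\rtimes S$ picture avoids this.
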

\begin{proof}  

The action $\alpha$ of $S$ on $A$ extends to an action $\overline{\alpha}$ on $M(A)$. Then $e_s=\overline{\alpha}(s)(1_{M(A)})$ is a strict limit of $\overline{\alpha}(s)(u_\lambda)=\alpha_s(u_\lambda)$ and thus it is the projection onto $E_{v_s}=\alpha_s(A)$. Moreover, $\delta$ is a unit in $M(E_{v_s})$ and the map $ m\to e_s m e_s$ implements an embedding $M(E_{v_s})\hookrightarrow M(A)$. On the other hand, one can  easily verify that $\overline{\alpha}(s)(M(A))\subset M(E_{v_s})$. Hence, $\overline{\alpha}(s)(M(A))= M(E_{v_s})$ and we also have that $\overline{\alpha}(s)(M(A))$ is a closed two-sided *-ideal in $M(A)$. By \autoref{lma:acta} we also obtain an action $\overline{\beta}$ of $S^*$ on $M(A)$, which clearly extends the action $\beta$ on $A$. 

By definition and the fact that non-degenerate representations of $A$ are extendible, the crossed product $A\rtimes_{\alpha}S$ is a closed ideal in $M(A)\rtimes_{\overline{\alpha}} S$ generated as a C*-algebra by $\{i_{M(A)}(a)i_S(s)|\ a\in A,\ s\in S\}$. Take $a,b\in A$, $s,t\in S$ and calculate the following products inside $M(A)\rtimes_{\overline{\alpha}} S$.
$$i_S(s)^*i_{M(A)}(a)=i_S(s)^*i_S(s)i_S(s)^*i_{M(A)}(a)= i_S(s)^*i_{M(A)}(e_sa)= $$ $$i_{M(A)}(\overline{\alpha}(s)^*(e_sa))i_S(s)^*,$$
$$i_{M(A)}(a)i_S(s)i_{M(A)}(b)i_S(t)=i_{M(A)}(a)i_S(s)i_{M(A)}(b)i_S(s)^*i_S(s)i_S(t)=$$ $$i_{M(A)}(a\alpha_s(b))i_S(s)i_S(t).$$
It follows that $A\rtimes_{\alpha}S$ is the closure of the linear span  of the set $\{i_{M(A)}(a)x|\ a\in A,\ x=i_S(s_1)^*i_S(s_2)...i_S(s_n),\ s_i\in S, \ n\in\mathbb{N}\}$. As usual, we call elements of the form $i_S(s_1)^*i_S(s_2)^*...i_S(s_n)^*$ monomials. Repeating the same reasoning as in \autoref{thm:tcross}, for any product $i_{M(A)}(a)i_S(s_1)^*i_S(s_2)...i_S(s_n)$ we may assume that $a\in E_{s_1^*s_2...s_n}$.
 
In the same way the crossed product $A\rtimes_{\beta} S^*$ can be viewed as a closed ideal in the unital crossed product $M(A)\rtimes_{\beta} S^*$. Denote by $j_{A,x}$ the embedding of $E_x$ 
 in $M(A)\rtimes_{\beta} S^*$. Then $A\rtimes_{\beta} S^*$ is the closure of the linear span of the set $\{j_{A,x}(a)\delta_x|\ x\in S^*\}$ with the algebraic structure given by Definition \ref{crossinv}. 

\autoref{thm:tcross} gives an isomorphism $\phi$ between the C*-algebras $M(A)\rtimes_{\overline{\alpha}} S$ and $M(A)\rtimes_{\overline{\beta}}S^*$. For any $a\in A$ and a monomial $x$ the map $\phi$ sends $i_{M(A)}(a)x$ to $j_{A,x}(a)\delta_x$. By the Lemma \ref{covrep}, the covariant representations of the systems $(\alpha,S,A)$ and $(\beta,S^*,A)$ are the same. Therefore,  $i_A(a)=0$ iff $j_{A,1}(a)=0$. Hence $\phi$ restricts to an isomorphism between $A\rtimes_\alpha S$ and $A\rtimes_{\beta} S^*$. 
  \end{proof}

\begin{cor} Let $\alpha$ be an action of $S$ on a locally compact Hausdorff space $X$ by continuous injective maps and $\beta$ the induced action of $S^*$. Then the crossed product C*-algebras $C_0(X)\rtimes_\alpha S$ and $C_0(X)\rtimes_\beta S^*$ are isomorphic. 
\end{cor}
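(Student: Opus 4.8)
The plan is to reduce the statement to the C*-algebraic isomorphism already proved, by passing through Gelfand duality. A continuous injective action $\alpha$ of $S$ on $X$ amounts, for each $s\in S$, to a homeomorphism $\alpha_s\colon X\to D_s$ onto the open image $D_s=\alpha_s(X)$, subject to $\alpha_s\circ\alpha_t=\alpha_{st}$ and $D_{st}=\alpha_s(D_t)$. Dualizing, I would set for each $s$ a map $\tilde{\alpha}_s\colon C_0(X)\to C_0(X)$ by $\tilde{\alpha}_s(f)=f\circ\alpha_s^{-1}$ on $D_s$ and $\tilde{\alpha}_s(f)=0$ on $X\setminus D_s$. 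Because $\alpha_s^{-1}\colon D_s\to X$ carries points approaching $\partial D_s$ off to infinity in $X$, the function $f\circ\alpha_s^{-1}$ extends continuously by zero, so $\tilde{\alpha}_s$ is a well-defined injective $*$-homomorphism whose image is the closed two-sided ideal $C_0(D_s)\subseteq C_0(X)$; this is exactly an injective action with ideal images in the sense of \autoref{dyn}. Multiplicativity $\tilde{\alpha}_s\tilde{\alpha}_t=\tilde{\alpha}_{st}$ follows from $\alpha_s\alpha_t=\alpha_{st}$ together with $D_{st}=\alpha_s(D_t)$, since precomposition with $\alpha_t^{-1}\circ\alpha_s^{-1}=\alpha_{st}^{-1}$ is recorded correctly on the nested open domains.

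Next I would record that the two C*-actions named in the statement are induced from one another as in \autoref{lma:acta}. The space action $\beta$ of $S^*$ is the one produced from $\alpha$ by \autoref{lma:actsp} and its topological analogue, so $\beta_{v_s}=\alpha_s$ and $\beta_{v_s^*}=\alpha_s^{-1}$; dualizing $\beta$ gives an action $\tilde{\beta}$ of $S^*$ on $C_0(X)$ with $\tilde{\beta}_{v_s}=\tilde{\alpha}_s$ on generators, which is precisely the $S^*$-action that \autoref{lma:acta} attaches to $\tilde{\alpha}$. With this identification in place the statement becomes an application of the crossed-product theorems. If $X$ is compact, then $C(X)$ is unital and \autoref{thm:tcross} applies directly, giving $C(X)\rtimes_\alpha S\cong C(X)\rtimes_\beta S^*$. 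If $X$ is non-compact, then $C_0(X)$ is non-unital and I would invoke \autoref{crossall} instead, which delivers the same isomorphism once the actions are known to be extendible.

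The main obstacle is exactly this extendibility hypothesis of \autoref{crossall}: one must check that each $\tilde{\alpha}_s$ admits a strict extension to the multiplier algebra $M(C_0(X))=C_b(X)$, the crucial point being that $\overline{\tilde{\alpha}}_s(1)$ should equal the projection $e_s$ onto the ideal $C_0(D_s)$ that the proof of \autoref{crossall} manipulates. Concretely I would approximate $e_s$ by $\tilde{\alpha}_s(u_\lambda)$ for an approximate unit $(u_\lambda)$ of $C_0(X)$ and verify strict convergence in $C_b(X)$, which is the step where the topological data (that $\alpha_s$ is a homeomorphism onto the open set $D_s$) must be converted into genuine multiplier-level data. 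This is the delicate part, because the projection $e_s=\chi_{D_s}$ lies in $C_b(X)$ precisely when $D_s$ is also closed; thus the compact case is the clean one (where no such condition is needed and \autoref{thm:tcross} applies verbatim), while in the general non-compact case the only substantive work beyond the formal dualization is to ensure, under the standing hypotheses, that the dual action is extendible in the sense demanded by Larsen's framework.
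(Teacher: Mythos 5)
Your argument is essentially the paper's proof: dualize $\alpha$ to $\tilde{\alpha}_s(f)=f\circ\alpha_s^{*}$, observe that the image is the closed ideal $C_0(D_s)$ so that one has an injective action with ideal images, identify the $S^*$-action produced by Lemma~\ref{lma:acta} with the one induced by $\beta$, and invoke Theorem~\ref{crossall} (the paper does not even separate out the unital case, which your use of Theorem~\ref{thm:tcross} for compact $X$ handles verbatim). The one point where you go beyond the paper is the extendibility discussion, and your worry there is well founded rather than merely ``delicate'': the paper's proof cites Theorem~\ref{crossall}, whose hypothesis is an \emph{extendible} injective action, without checking that $\tilde{\alpha}_s(u_\lambda)$ converges strictly in $M(C_0(X))=C_b(X)$. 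As you observe, the would-be limit is $\chi_{D_s}$, which is a continuous bounded function only when the open set $D_s$ is also closed; if $D_s$ has nonempty boundary and $f\in C_0(X)$ is nonzero at a boundary point, the net $\tilde{\alpha}_s(u_\lambda)f$ cannot converge in norm, so extendibility genuinely fails. Thus in the non-compact case the statement needs extendibility (equivalently, clopenness of the images $D_s$) as an explicit hypothesis; neither your proposal nor the paper derives it from the stated assumptions, so to complete the proof you should either add that hypothesis or restrict to the compact/unital case where Theorem~\ref{thm:tcross} applies directly.
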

\begin{proof} The action $\alpha$ induces an action $\tilde{\alpha}$ on $C_0(X)$ by the formula
$$\tilde{\alpha}_s(f)(x)=f(\alpha^*_s(x)),$$ 
where $f\in C_0(X)$, $x\in D_s$ in the notation of \autoref{lma:actsp}. Then clearly $E_s=C_0(D_s)\subset C_0(X)$, so $\tilde{\alpha}_s\colon C_0(X)\to C_0(D_s)$ and every $\tilde{\alpha}_s$ is injective. The action of $S^*$ given by $\tilde{\alpha}$ by \autoref{lma:acta} coincides with the action $\tilde{\beta}$ induced by  $\beta$ on $C_0(X)$. Therefore by Theorem \ref{crossall} the crossed products are isomorphic.
\end{proof}

\section{Partial crossed products, Ore semigroups}\label{secore}

The results of \cite{Milan} show that a C*-algebra of an E-unitary inverse semigroup $P$ is isomorphic to a partial crossed product  of the commutative subalgebra generated by idempotents in $P$ by a partial action of $G$, where $G$ is the maximal group homomorphic image of $P$.

\begin{dfn}
\emph{A partial action} $\alpha$ of a group $G$ on a set  $X$ is a pair $(\{D_g\}_{g\in G},\{\alpha_g\}_{g\in G})$, where $D_g$ are subsets of $X$ and $\alpha_g\colon D_{g^{-1}}\to D_g$  are bijections, satisfying for any $g,h\in G$:
\begin{enumerate}
	\item $D_1=X$, $\alpha_1=id_X$,
	\item $\alpha_g(D_{g^{-1}}\cap D_h)=D_g\cap D_{gh}$,
	\item $(\alpha_g\alpha_h)(x)=\alpha_{gh}(x)$ for all $x\in D_{h^{-1}}\cap D_{h^{-1}g^{-1}}$.
\end{enumerate}

\end{dfn}

\begin{thm}\label{thm:milan} (D.~Milan, B.~Steinberg \cite{Milan}.) Let $P$ be an E-unitary inverse semigroup with idempotent set $E$ and maximal group image $G$. Then there exists a partial action of $G$ on $E$ such that 
 $$C^*(P) \cong C^*(E)\rtimes G, \ C^*_r(P) \cong C^*(E)\rtimes_r G.$$
\end{thm}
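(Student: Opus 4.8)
The plan is to build the required partial action explicitly from the structure map $\sigma\colon P\to G$ and the idempotent kernel forced by E-unitarity, and then to exhibit a concrete $*$-isomorphism onto the partial crossed product. Recall from \autoref{pgr:invsem} and the construction in the corollary of \autoref{seccross} that $P$ acts on its own idempotent semilattice $E$ by the partial bijections $e\mapsto ses^*$ defined on $\{e\in E\colon e\le s^*s\}$. First I would push this action down along $\sigma$: for $g\in G$ set
$$D_g=\{e\in E\colon e\le ss^*\ \text{for some}\ s\in P\ \text{with}\ \sigma(s)=g\},$$
which is an order ideal of $E$, and define $\alpha_g(e)=ses^*$ for $e\in D_{g^{-1}}$, choosing any $s$ with $\sigma(s)=g$. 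The key point is well-definedness: if $\sigma(s)=\sigma(t)=g$ then $\sigma(s^*t)=1$, so E-unitarity forces $s^*t\in E$, i.e.\ $s$ and $t$ are compatible, and a short computation with commuting idempotents then gives $ses^*=tet^*$ whenever $e\le s^*s\wedge t^*t$. Granting this, I would verify the three partial-action axioms; axiom (2), $\alpha_g(D_{g^{-1}}\cap D_h)=D_g\cap D_{gh}$, is again where E-unitarity does the work, since it lets one lift equalities in $G$ to relations in $P$ valid up to an idempotent.

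Next I would promote this to a partial action on the C*-algebra level. Each order ideal $D_g$ generates a closed ideal $I_g\subset C^*(E)$, and $\alpha_g$ extends to a $*$-isomorphism $I_{g^{-1}}\to I_g$, so that $(C^*(E),G,\alpha)$ is a partial dynamical system and the partial crossed product $C^*(E)\rtimes G$ is defined. A dense $*$-subalgebra is spanned by the elements $a\delta_g$ with $a\in I_g$, with product $(a\delta_g)(b\delta_h)=\alpha_g(\alpha_{g^{-1}}(a)b)\delta_{gh}$. I would then define $\Phi\colon C^*(P)\to C^*(E)\rtimes G$ on generators by
$$\Phi(s)=ss^*\,\delta_{\sigma(s)},$$
noting $ss^*\in D_{\sigma(s)}$. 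That $\Phi$ respects the involution is immediate, since $\Phi(s)^*=\alpha_{\sigma(s)^{-1}}(ss^*)\,\delta_{\sigma(s)^{-1}}=s^*s\,\delta_{\sigma(s)^{-1}}=\Phi(s^*)$, and multiplicativity reduces, using $\alpha_{\sigma(s)^{-1}}(ss^*)=s^*s$, to the identity $\alpha_{\sigma(s)}(s^*s\cdot tt^*)=s\,tt^*s^*=(st)(st)^*$, which holds because idempotents commute. By the universal property of the full C*-algebra of an inverse semigroup, $\Phi$ extends to $C^*(P)$; surjectivity is clear, since $\Phi(e)=e\delta_1$ recovers $C^*(E)$ and the elements $ss^*\delta_{\sigma(s)}$ exhaust the remaining generators.

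The hard part will be injectivity, and in parallel the reduced statement. For the full algebras I would construct the inverse explicitly: given $e\in D_g$, pick $s$ with $\sigma(s)=g$ and $e\le ss^*$, and set $\Psi(e\delta_g)=es$; here $\sigma(es)=g$ and $(es)(es)^*=e$, while E-unitarity again guarantees independence of the choice of $s$. Checking that $\Psi$ is a $*$-homomorphism and that $\Psi\circ\Phi$ and $\Phi\circ\Psi$ are the respective identities then finishes the full case. For the reduced isomorphism $C^*_r(P)\cong C^*(E)\rtimes_r G$, the cleanest route is to observe that both sides coincide with the C*-algebra of one and the same groupoid — the universal (equivalently, for E-unitary $P$, the transformation) groupoid $\widehat{E}\rtimes G$ attached to the partial action on the spectrum $\widehat{E}$ of $C^*(E)$ — so that $\Phi$ intertwines the canonical conditional expectations onto $C^*(E)$ and hence descends to the reduced completions. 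Verifying that the E-unitary hypothesis is precisely what makes the universal groupoid a transformation groupoid is the step I expect to require the most care.
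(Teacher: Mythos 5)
The paper does not prove this statement: Theorem~\ref{thm:milan} is quoted from Milan and Steinberg \cite{Milan}, so there is no in-paper argument to compare yours against. Your sketch follows the standard route of that reference (and of Exel's earlier work on E-unitary inverse semigroups): the partial action $\alpha_g(e)=ses^*$ on domains cut out by $\sigma$, with E-unitarity supplying well-definedness via $\sigma(s^*t)=1\Rightarrow s^*t\in E$, and an identification of the two universal objects. The outline is sound and the computations you do display ($\Phi(s)^*=\Phi(s^*)$, $\alpha_{\sigma(s)}(s^*s\,tt^*)=(st)(st)^*$, the verification of axiom (2) via elements like $set$) are correct. Two points need tightening. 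First, in defining $\alpha_g(e)$ you must choose $s$ with $\sigma(s)=g$ \emph{and} $e\le s^*s$, not an arbitrary preimage of $g$; this is what your well-definedness computation actually covers, but as written the definition is ambiguous. Second, the inverse map $\Psi$ is only defined on the generators $e\delta_g$ with $e\in D_g$; to conclude you must either verify that its linear-multiplicative extension to the dense $*$-algebra of the partial crossed product is a $*$-homomorphism (whence contractivity for the universal norm is automatic), or, closer to \cite{Milan}, show that every $*$-representation $\pi$ of $P$ yields a covariant pair $\bigl(\pi|_{C^*(E)},u\bigr)$ with $u_g$ the join of the compatible family $\{\pi(s)\colon\sigma(s)=g\}$. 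The reduced isomorphism then follows from faithfulness of the canonical conditional expectations onto $C^*(E)$ on both sides (or from the transformation-groupoid picture you invoke); you are right to flag that identification as the step requiring the most care, since it is essentially the content of the Milan--Steinberg proof.
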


In a view of \autoref{cor:eunitaryall}, we get an immediate Corollary.

\begin{cor}
If a semigroup $S$ is embeddable in a group, $C^*(S^F)\cong C^*(E^F)\rtimes G$, $C^*(S^*)\cong C^*(E)\rtimes G$, where $E^F$ and $E$ are subsemigroups of idempotents in $S^F$ and $S^*$ correspondingly, and $G$ is the group generated by $S$. The same holds for the reduced C*-algebras and reduced crossed products.
\end{cor}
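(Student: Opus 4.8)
The plan is to recognize this corollary as a direct specialization of the Milan--Steinberg decomposition in \autoref{thm:milan}, applied separately to the two inverse semigroups $S^F$ and $S^*$. All the necessary structural facts have already been established in Section 3, so the proof reduces to checking the single hypothesis and reading off the relevant invariants.

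First I would verify the hypothesis of \autoref{thm:milan}, namely E-unitarity. Since $S$ is assumed embeddable in a group, \autoref{cor:eunitaryall} guarantees that both $S^F$ and $S^*$ are E-unitary inverse semigroups. Next I would identify their maximal group homomorphic images: by \autoref{lma:eunitary}, in this embeddable case $G(S^F)=G(S^*)$ is isomorphic to $G$, the group generated by $S$. The idempotent semilattices are $E^F=E(S^F)$ and $E=E(S^*)$ by the definition of these notations.

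It then remains only to invoke \autoref{thm:milan} twice. Taking $P=S^F$ produces a partial action of $G$ on $E^F$ realizing $C^*(S^F)\cong C^*(E^F)\rtimes G$, and taking $P=S^*$ produces a partial action of $G$ on $E$ realizing $C^*(S^*)\cong C^*(E)\rtimes G$. Because \autoref{thm:milan} delivers the reduced isomorphisms $C^*_r(P)\cong C^*(E)\rtimes_r G$ simultaneously with the full ones, the claim for the reduced C*-algebras and reduced crossed products follows without further argument. There is no substantive obstacle here; the only point demanding care is keeping the two decompositions distinct, since they involve different idempotent semilattices ($E^F$ versus $E$) even though the acting group $G$ is common to both.
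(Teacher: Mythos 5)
Your proposal matches the paper's argument exactly: the paper derives this corollary immediately from \autoref{cor:eunitaryall} (E-unitarity of $S^F$ and $S^*$) together with the identification $G(S^F)=G(S^*)=G$ from \autoref{lma:eunitary}, and then applies \autoref{thm:milan} to each semigroup. Your write-up is simply a more explicit version of the same one-line deduction, so it is correct and takes essentially the same route.
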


The models for $S^F$ and $S^*$ described in \autoref{thm:modelsf} and \autoref{cor:modelss} give us concrete formulas for the partial actions in Corollary above.

For every $g\in G$ define using the notation of \autoref{thm:modelsf}:
$$D_{g^{-1}}=\{[A]\in E^F\colon \mbox{ there exist } g_1,g_2\in A \mbox{ such that } g_1\leq g^{-1}\leq g_2\},$$ 
or equivalently $D_{g^{-1}}=\{[A]\colon [\{1,g^{-1}\}]\leq [A]\}$. For $[A]\in D_{g^{-1}}$ define
$$\alpha_g([A])=g[A]=[\{1\}\cup\{gh\colon h\in A\}]$$
One can easily see that $\alpha_g$ is a bijection between $D_{g^{-1}}$ and $D_{g^{-1}}$ and the formula 3) from the Definition is satisfied as well. Hence, $\alpha$ is a partial action of $G$ on $E^F$, and it coincides with the partial action in \autoref{thm:milan}. 

Consider $S^*$ and its idempotent subsemigroup $E$. Now $[A]$ denotes an equivalence class defined in \autoref{cor:modelss}. For every $g\in G$ we define
$$\tilde{D}_{g^{-1}}=\{[A]\in E\colon g\in A\cdot S^{-1} \},$$ 
 where $A\cdot S^{-1}$ denotes pointwise product of sets and $S^{-1}=\{a^{-1}\colon a\in S\}\subset G$. For $[A]\in \tilde{D}_{g^{-1}}$ define
$$\tilde{\alpha}_g([A])=g[A]=[\{1\}\cup\{gh\colon h\in A\}]$$
Again, $\tilde{\alpha}$ is a partial action of $G$ on $S^*$, which gives an isomorphism $C^*(S^*)\cong C^*(E)\rtimes G$.

\begin{pgr}\label{pgr:ore}

For a particular class of semigroups we can say more about the connection between $S^*$ and partial crossed products of groups.

\begin{dfn}
A partial action $\alpha$ of a group $G$ on a C*-algebra $A$ is a pair $(\{E_g\}_{g\in G},\{\alpha_g\}_{g\in G})$, where $E_g$ are closed two-sided *-ideals in $A$ and $\alpha_g\colon E_{g^{-1}}\to E_g$  are *-isomorphisms, satisfying for any $g,h\in G$:
\begin{enumerate}
	\item $E_1=A$,
	\item $\alpha_g(E_{g^{-1}}\cap E_h)=E_g\cap E_{gh}$,
	\item $(\alpha_g\alpha_h)(x)=\alpha_{gh}(x)$ for all $x\in E_{h^{-1}}\cap E_{h^{-1}g^{-1}}$.
\end{enumerate}
Then $(A,G,\alpha)$ is a C*-partial dynamical system.
\end{dfn}

In \cite{Exel2} it was shown, that the partial actions and partial representations of a group $G$ are in one-to-one correspondence with actions and representations of a special inverse semigroup $S(G)$. Moreover, an isomorphism between a partial crossed product by $G$ and a crossed product by $S(G)$ was proved in \cite{Exel2} and earlier in \cite{Sieben}. We recall these results.

Following \cite{Exel2} $S(G)$ is a semigroup generated by elements $t_g$ for all $g\in G$ satisfying the following relations:

	\begin{equation}\label{tg1} t_{g^{-1}}t_gt_h=t_{g^{-1}}t_{gh}\end{equation}
	\begin{equation}\label{tg2} t_gt_ht_{h^{-1}}=t_{gh}t_{h^{-1}}\end{equation}
	 \begin{equation}\label{tg} t_gt_1=t_g \end{equation}

Then $S(G)$ is an inverse semigroup with unit $t_1$ and involution $t_g^*=t_{g^{-1}}$. A partial representation of $G$ on a Hilbert space $H$ is a map $T\colon G\to B(H)$, sending $g\to T_g$, where $T_g$ satisfy relations (\ref{tg1})--(\ref{tg}).

\begin{lma}\label{ex1}(\cite{Exel2}) Partial actions (partial representations) of $G$ are in one-to-one correspondence with actions (resp. *-representations) of $S(G)$.   \end{lma}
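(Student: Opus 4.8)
The plan is to read the defining relations (\ref{tg1})--(\ref{tg}) of $S(G)$ as exactly the axioms governing partial representations of $G$, so that $S(G)$ is the universal inverse semigroup through which every partial representation factors and both halves of the statement become instances of the universal property of a semigroup presentation. For the representation correspondence, a *-representation $\pi$ of $S(G)$ on $H$ restricts to a map $T_g := \pi(t_g)$ which inherits (\ref{tg1})--(\ref{tg}) from the homomorphism property, while $t_g^* = t_{g^{-1}}$ together with $\pi$ being unital give $T_{g^{-1}} = \pi(t_g^*) = T_g^*$ and $T_1 = \mathrm{id}$; hence $T$ is a partial representation. Conversely, starting from a partial representation $T$ (which carries $T_1 = \mathrm{id}$ and the involution $T_{g^{-1}} = T_g^*$, rendering the relations $*$-closed), I would first establish the structural fact that the projections $T_g T_{g^{-1}}$ commute -- this is where the relations do the real work -- so that, together with $T_g T_{g^{-1}} T_g = T_g$ (read off from (\ref{tg2}) with $h = g^{-1}$ and $T_1 = \mathrm{id}$), the $T_g$ are partial isometries generating an inverse subsemigroup of $B(H)$ by Vagner's theorem (\ref{vagner}). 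The assignment $t_g \mapsto T_g$ then respects every defining relation, so by the universal property of the presentation it extends to a unique homomorphism $\pi \colon S(G) \to B(H)$, and the identity $T_{g^{-1}} = T_g^*$ on generators propagates to $\pi(s^*) = \pi(s)^*$ for all $s$. The two assignments are mutually inverse by construction.

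For the action correspondence I would run the same argument with the inverse semigroup $\mathcal{I}(A)$ of partial automorphisms of $A$ in place of $B(H)$, recalling from \autoref{dyn} that an action of $S(G)$ on $A$ is precisely a *-homomorphism $S(G) \to \mathcal{I}(A)$. An action $\beta$ of $S(G)$ restricts to partial automorphisms $\alpha_g := \beta_{t_g}$ with domain $E_{g^{-1}} := E_{t_{g^{-1}}}$ and range $E_g := E_{t_g}$; since $t_{g^{-1}} = t_g^*$ one reads off $\alpha_g^{-1} = \alpha_{g^{-1}}$ and $E_1 = A$, and the composition axioms (2) and (3) of a partial action follow from the relations (\ref{tg1})--(\ref{tg}) together with the commutation of the idempotents $t_g t_g^*$. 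Conversely a partial action $(\{E_g\},\{\alpha_g\})$ furnishes partial automorphisms obeying these same relations, which I would extend through $S(G)$ to a genuine action on $A$.

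The step I expect to be the main obstacle is the backward direction of the action correspondence. Extending a partial action from the generators $t_g$ to an action of all of $S(G)$ forces one to attach a closed two-sided *-ideal $E_s$ to every element $s \in S(G)$, not just to the generators, by iterating the domain--intersection--image prescription of axiom (2) along a word representing $s$; the delicate point is to check that this assignment is constant on the congruence classes cut out by (\ref{tg1})--(\ref{tg}), i.e. that the relations force exactly the identifications of ideals needed for $\beta$ to descend to the quotient inverse semigroup. By comparison the representation correspondence is essentially automatic once the commutation of the projections $T_g T_{g^{-1}}$ is in hand, and the close parallel between the two halves lets the action statement be modelled on the representation one.
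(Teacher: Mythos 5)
The paper offers no proof of this lemma: it is quoted verbatim from Exel--Vieira \cite{Exel2} (see also \cite{Exel}), so there is nothing internal to compare against. Your outline reproduces the standard argument of that reference, and it is correct as a plan: the correspondence for representations is the universal property of the presentation of $S(G)$ once one knows the projections $\epsilon_g=T_gT_{g^{-1}}$ commute, and the action statement is the same argument run in $\mathcal{I}(A)$. The one ingredient you should name explicitly, since it is what closes both of the gaps you flag, is the normal form for $S(G)$ (every element is $\epsilon_{r_1}\cdots\epsilon_{r_n}t_g$, essentially uniquely): it upgrades commutativity of the generating projections to commutativity of \emph{all} idempotents of the generated semigroup (which is what Vagner's theorem actually requires), and it is what makes the assignment $s\mapsto E_s:=E_{r_1}\cap\cdots\cap E_{r_n}\cap E_g$ well defined on congruence classes in the backward direction of the action correspondence.
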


\begin{thm}\label{ex2} (\cite{Exel2}). Let $\alpha\colon G\to I(A)$ be a partial action of a group $G$ on a C*-algebra $A$, and $\beta$ the action of $S(G)$ induced by $\alpha$. Then  $A\rtimes_\alpha G$ and $A\rtimes_\beta S(G)$ are isomorphic.
\end{thm}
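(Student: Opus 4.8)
The plan is to realise both crossed products as completions of explicitly isomorphic dense $*$-subalgebras, using \autoref{ex1} to identify their covariant representations. Write $\sigma\colon S(G)\to G$ for the canonical homomorphism onto the maximal group homomorphic image, determined by $\sigma(t_g)=g$; every $s\in S(G)$ can be written $s=t_{g_1}\cdots t_{g_n}$ with $\sigma(s)=g_1\cdots g_n$. Since $\beta$ is induced by $\alpha$ via \autoref{ex1}, we have $\beta_{t_g}=\alpha_g$, and the partial-action composition axiom gives $\beta_s=\alpha_{\sigma(s)}|_{E_{s^*}}$, with range ideal
$$E_s=E_{g_1}\cap E_{g_1g_2}\cap\cdots\cap E_{g_1\cdots g_n}\subseteq E_{\sigma(s)}.$$
First I would record the representation correspondence: by \autoref{ex1} a pair $(\pi,u)$ consisting of a nondegenerate representation $\pi$ of $A$ and a partial representation $u$ of $G$ corresponds bijectively to a pair $(\pi,T)$ with $T$ the unital $*$-representation of $S(G)$ given by $T_{t_g}=u_g$. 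I would check that the partial covariance condition $u_g\pi(a)u_g^*=\pi(\alpha_g(a))$ for $a\in E_{g^{-1}}$ is equivalent to the inverse-semigroup covariance condition of \autoref{covinv}, namely $T_s\pi(a)T_s^*=\pi(\beta_s(a))$ for $a\in E_{s^*}$; this is immediate from $\beta_s=\alpha_{\sigma(s)}|_{E_{s^*}}$ together with the standard decomposition $T_s=u_{g_1}\cdots u_{g_n}=e(g_1)\,e(g_1g_2)\cdots e(g_1\cdots g_{n-1})\,u_{\sigma(s)}$ of a partial representation into its idempotent projections $e(h)=u_hu_h^*$.

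Next I would define $\Phi$ on the dense $*$-subalgebra $L$ of $A\rtimes_\beta S(G)$ from \autoref{crossinv} by $\Phi(a\delta_s)=a\delta_{\sigma(s)}$, which makes sense because $a\in E_s\subseteq E_{\sigma(s)}$. Multiplicativity follows from the identity $\beta_s(\beta_{s^*}(a)b)=\alpha_{\sigma(s)}(\alpha_{\sigma(s)^{-1}}(a)b)$, a consequence of $\beta_s$ being the restriction of $\alpha_{\sigma(s)}$, once the product conventions of \autoref{crossinv} and of the partial crossed product are matched; preservation of the involution is verified the same way, and surjectivity is clear since $\Phi(a\delta_{t_g})=a\delta_g$ for $a\in E_g=E_{t_g}$. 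It then remains to see that $\Phi$ is isometric for the two universal norms, and this is where the heart of the argument lies.

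For this I would compute, for any covariant representation with associated $\Pi=\pi\times T$ and $\Pi'=\pi\times u$, that
$$\Pi(a\delta_s)=\pi(a)T_s=\pi(a)\,e(g_1)\cdots e(g_1\cdots g_{n-1})\,u_{\sigma(s)}=\pi(a)u_{\sigma(s)}=\Pi'(a\delta_{\sigma(s)}),$$
where each idempotent factor $e(g_1\cdots g_j)$ is absorbed because $a\in E_{g_1\cdots g_j}$ forces $\pi(a)e(g_1\cdots g_j)=\pi(a)$ (the projection $e(h)$ onto $\overline{\pi(E_h)H}$ acts as the identity on the range of $\pi(a)$ when $a\in E_h$, by an approximate-unit argument). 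Thus $\Pi=\Pi'\circ\Phi$, so the supremum norms defining the two crossed products agree on $\Phi$, and $\Phi$ extends to the desired isomorphism $A\rtimes_\beta S(G)\cong A\rtimes_\alpha G$. The main obstacle is precisely this collapsing step: one must treat the idempotent structure of $S(G)$ carefully, confirming both that $E_s$ is the stated intersection of domain ideals and that every covariant representation annihilates the difference between $a\delta_s$ and $a\delta_{\sigma(s)}$, since it is this that simultaneously yields well-definedness, injectivity, and isometry of $\Phi$.
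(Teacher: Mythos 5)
The paper offers no proof of this statement: it is imported verbatim from Exel--Vieira \cite{Exel2} (with \cite{Sieben} credited for an earlier version), so there is no in-paper argument to compare yours against. That said, your reconstruction is correct and follows the standard route of \cite{Exel2}: the identities $\beta_s=\alpha_{\sigma(s)}|_{E_{s^*}}$ and $E_s=\bigcap_j E_{g_1\cdots g_j}$, the factorization $u_{g_1}\cdots u_{g_n}=e(g_1)\,e(g_1g_2)\cdots e(g_1\cdots g_{n-1})\,u_{\sigma(s)}$ of a partial representation, and the absorption $\pi(a)e(h)=\pi(a)$ for $a\in E_h$ (which follows directly from covariance: $e(h)\pi(\alpha_h(b))=u_hu_h^*u_h\pi(b)u_h^*=\pi(\alpha_h(b))$ for $b\in E_{h^{-1}}$, no approximate unit needed) are exactly the ingredients that make $\Pi=\Pi'\circ\Phi$ hold, whence the two universal seminorms agree and $\Phi$ descends to an isomorphism of the Hausdorff completions; note that $\Phi$ itself is not injective on $L$, only on the quotient, which is consistent with how \autoref{crossinv} defines the crossed product. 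Two small points deserve explicit attention in a write-up: (i) matching the covariant representations requires verifying condition (2) of \autoref{covinv}, namely $T_sT_s^*H=\pi(E_s)H$, which does not follow from the covariance identity alone but from the corresponding range condition in the definition of a covariant pair for a partial action (a definition the paper never states); (ii) the multiplication in \autoref{crossinv} is printed with $\delta_{ts}$ rather than $\delta_{st}$, so the ``matching of product conventions'' you flag is genuinely needed for $\Phi$ to come out multiplicative rather than anti-multiplicative.
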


 Let us consider a particular case of a group. Namely, let $S$ be a left Ore semigroup, so that by  Theorem \ref{ore} of Ore and Dubreil there exists a group $G$ such that $G=S^{-1}S$. Now we study the connections between  $S(G)$ and the inverse semigroup $S^*$ generated by $S$ as defined in Section \ref{s1}.

In $S(G)$ all elements are partial isometries, including $t_s$ for all $s\in S$. Moreover, $S(G)$ is generated not only by elements corresponding to $S$. It follows that $S(G)$ is not isomorphic to $S^F$ or to $S^*$. We implement the choice of generators by setting which elements should be represented by isometries. Namely, define on $S(G)$ a relation $t_s^*t_s\sim 1$ for all $s\in S$ and denote by $R$ the generated congruence.

\begin{lma}\label{sgcong} The quotient semigroup of $S(G)$ by the congruence $R$ is isomorphic to $S^*$. \end{lma}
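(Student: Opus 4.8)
The plan is to construct mutually inverse homomorphisms between $S^*$ and $S(G)/R$. Since $S$ is left Ore it is left and right cancellative and embeds in $G=S^{-1}S$ by \autoref{ore}, so $S^*$ is available. First I would build the surjection $\psi\colon S^*\to S(G)/R$. Note that in $S(G)$ one has $t_gt_h=t_{gh}t_{h^{-1}}t_h$, obtained from (\ref{tg2}) by right multiplication by $t_h$ and the identity $t_ht_{h^{-1}}t_h=t_h$. Hence modulo $R$, where $t_q^*t_q=t_{q^{-1}}t_q\sim 1$ for $q\in S$, the classes $[t_p]$ ($p\in S$) satisfy $[t_p][t_q]=[t_{pq}]$ and $[t_p]^*[t_p]=1$. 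By the universal property of $S^*=F(S)/(R_4\cup R_1)$ from \autoref{lma:allfree} — a map of $S$ into an inverse semigroup sending each $p$ to an isometry and respecting the multiplication of $S$ extends to a homomorphism — this yields $\psi$ with $\psi(v_p)=[t_p]$. Surjectivity follows because $G=S^{-1}S$: writing $g=s^{-1}t$ with $s,t\in S$, the same identity gives $t_g=t_{s^{-1}t}\sim t_{s^{-1}}t_t=t_s^*t_t$ modulo $R$, so $[t_g]=[t_s]^*[t_t]$ lies in the subsemigroup generated by the $[t_p]$.

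For the reverse map I would realise a partial representation of $G$ inside $S^*$. For $g\in G$ choose a left fraction $g=a^{-1}b$ with $a,b\in S$ and set $u_g=v_a^*v_b\in S^*$. Well-definedness uses the Ore property: if $a^{-1}b=c^{-1}d$, then taking common left multiples there are $x,y\in S$ with $xa=yc$ and, after cancellation, $xb=yd$; since $v_x^*v_x=1$ we get $v_a^*v_b=v_{xa}^*v_{xb}=v_{yc}^*v_{yd}=v_c^*v_d$. Clearly $u_1=1$ and $u_{g^{-1}}=u_g^*$. The key point is to check that $u$ is a partial representation, i.e. relations (\ref{tg1})--(\ref{tg}). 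Relation (\ref{tg}) is immediate, and (\ref{tg1}) is the involution-adjoint of (\ref{tg2}) (since $u_g^*=u_{g^{-1}}$), so it suffices to verify $u_gu_hu_{h^{-1}}=u_{gh}u_{h^{-1}}$. Writing $g=a^{-1}b$, $h=c^{-1}d$ and choosing $x,y\in S$ with $xb=yc$, one resolves $bc^{-1}=x^{-1}y$, whence $gh=(xa)^{-1}(yd)$ and $u_{gh}=v_{xa}^*v_{yd}$; substituting $v_b=v_x^*v_yv_c$ and repeatedly using $v_p^*v_p=1$ together with commutativity of the idempotents $v_cv_c^*$ and $v_dv_d^*$ collapses both sides to $v_{xa}^*v_{yd}v_d^*v_c$.

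By the defining presentation of $S(G)$ in \cite{Exel2} (\autoref{ex1}), a map $g\mapsto u_g$ satisfying these relations extends to a homomorphism $S(G)\to S^*$. Since $1\in S$ gives $u_s=v_1^*v_s=v_s$ for $s\in S$, this homomorphism sends $t_s^*t_s\mapsto v_s^*v_s=1$, so it annihilates the congruence $R$ and descends to $\phi\colon S(G)/R\to S^*$ with $\phi([t_p])=v_p$. Finally $\phi\circ\psi$ fixes every generator $v_p$ of $S^*$, hence $\psi$ is injective; being also surjective, $\psi$ is the desired isomorphism. The main obstacle I anticipate is the bookkeeping in (\ref{tg2}): keeping the Ore-theoretic substitutions consistent and ensuring every idempotent rearrangement is justified by commutativity of $E(S^*)$ and the isometry relations $v_p^*v_p=1$.
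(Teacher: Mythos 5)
Your proposal is correct and follows essentially the same route as the paper: both directions are established by checking that the generators of each semigroup satisfy the defining relations of the other (the paper verifies (\ref{tg1}) for the fractions $v_s^*v_p$ using the Ore condition exactly as you verify (\ref{tg2}), and derives $\tilde{t}_{s^{-1}p}=\tilde{t}_s^*\tilde{t}_p$, $\tilde{t}_{sp}=\tilde{t}_s\tilde{t}_p$ in $S(G)/R$ just as you do for the classes $[t_p]$). Your explicit well-definedness check for $u_g=v_a^*v_b$ and the observation that (\ref{tg1}) is the adjoint of (\ref{tg2}) are minor points of extra care rather than a different method.
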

\begin{proof}
For all $g\in G$ we denote by $\tilde{t}_g$ the image of $t_g$ under the quotient map $S(G)\to S(G)/R$. Then $S(G)$ is characterised by $R$ and the conditions (\ref{tg1})--(\ref{tg}). For any $s,p\in S$ set $g=s^{-1}$, $h=p$ and consider equation (\ref{tg1}):
$$\tilde{t}_s\tilde{t}_s^*\tilde{t}_p=\tilde{t}_s\tilde{t}_{s^{-1}p}$$
Multiplying from the left by   $\tilde{t}_s^*$ we obtain $\tilde{t}_{s^{-1}p}=\tilde{t}_s^*\tilde{t}_p$. In the same way setting $g=s$, $h=p$ in (\ref{tg2}) we get $\tilde{t}_{sp}=\tilde{t}_{s}\tilde{t}_{p}$. Since  a quotient of an inverse semigroup is an inverse semigroup, we deduce that $S(G)/R$ satisfies the definition of $S^*$ having $\tilde{t}_{s}$ at the place of $v_s$. The converse is also true, let us show for instance (\ref{tg1}) for the generators of $S^*$. Take arbitrary $g=p^{-1}q$, $h=s^{-1}r$ where $p,q,r,s\in S$. Let $qs^{-1}=a^{-1}b$ for some $a,b\in S$, then $gh=p^{-1}qs^{-1}r=p^{-1}a^{-1}br$ and we have
$$ v_av_q=v_bv_s \Longrightarrow v_s^*=v_q^*v_a^*v_b$$
Then the left hand side of (\ref{tg1}) equals 
$$(v_q^*v_p)(v_p^*v_q)(v_s^*v_r)=v_q^*v_pv_p^*v_qv_q^*v_a^*v_bv_r=$$ $$=v_q^*v_qv_q^*v_pv_p^*v_a^*v_bv_r=(v_q^*v_p)(v_p^*v_a^*v_bv_r)$$
Therefore, the map $\tilde{t}_{s^{-1}p}\to v_s^*v_p$ is an isomorphism between $S(G)/R$ and $S^*$.\end{proof}

Combining this result with Proposition 6.3 and Theorem 4.2 (presented here as Lemma \ref{ex1})  in \cite{Exel} and Lemmas \ref{reprs}, \ref{lma:actsp}, \ref{lma:acta}, we immediately get the following.

\begin{cor}\label{par}  Any isometric inverse representation of $S$  induces a partial representation of $G$.  Any injective action of $S$ on a space $X$ induces a partial action of $G$ on $X$. Any injective action of $S$ on a C*-algebra $A$ with ideal images induces a partial action of $G$ on $A$. \end{cor}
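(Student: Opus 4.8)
The plan is to prove all three statements in one stroke by composing correspondences that are already in place, using the quotient homomorphism $q\colon S(G)\to S(G)/R\cong S^*$ of \autoref{sgcong} (under which $q(t_s)=v_s$ for $s\in S$) as the bridge between the inverse semigroup $S^*$ attached to $S$ and Exel's inverse semigroup $S(G)$ governing partial structures over $G$. Since $S$ is left Ore, $G=S^{-1}S$ and \autoref{sgcong} applies, so this bridge genuinely exists; everything else is a formal composition. The schematic chain is: a structure of $S$ $\rightsquigarrow$ a unital structure of $S^*$ $\rightsquigarrow$ a structure of $S(G)$ (by pullback along $q$) $\rightsquigarrow$ a partial structure of $G$ (by \autoref{ex1}).

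First I would treat representations. Given an isometric inverse representation of $S$, \autoref{reprs} yields a unique unital $*$-representation $\tilde\pi$ of $S^*$. Composing with $q$ gives $\tilde\pi\circ q\colon S(G)\to B(H)$, which is automatically a $*$-representation of $S(G)$ because $q$ is a semigroup homomorphism and $\tilde\pi$ a $*$-homomorphism; moreover it sends $t_1$ to the identity since $q(t_1)$ is the unit of $S^*$ and $\tilde\pi$ is unital. By \autoref{ex1} this corresponds to a partial representation $T$ of $G$, given on generators by $T_g=\tilde\pi(q(t_g))$. The space case is identical in form: an injective action of $S$ on $X$ produces a unital action of $S^*$ on $X$ by \autoref{lma:actsp}; pulling back along $q$ gives an action of $S(G)$ on $X$ (the union of domains is still $X$ because $q$ is surjective and $\alpha_{q(t_1)}=\mathrm{id}_X$), and \autoref{ex1} (equivalently Proposition 6.3 of \cite{Exel}) turns it into a partial action of $G$ on $X$. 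Finally, an injective action with ideal images of $S$ on a C*-algebra $A$ gives a unital action of $S^*$ on $A$ by \autoref{lma:acta}, whose pullback along $q$ is an action of $S(G)$ on $A$, and the same correspondence delivers the desired partial action of $G$ on $A$.

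The substantive content has already been absorbed into \autoref{sgcong}, so there is no real obstacle left in the corollary itself; the only points I would be careful to record are that pullback along a unital semigroup homomorphism preserves $*$-representations and inverse-semigroup actions and, crucially, the image of the unit (so that unitality, hence the axioms $D_1=X$ and $E_1=A$, is preserved). The one genuinely structural remark worth making is that the partial representations and partial actions obtained this way are not arbitrary: they satisfy $T_s^*T_s=1$, respectively $D_{s^{-1}}=X$ and $E_{s^{-1}}=A$, for every $s\in S$, reflecting that elements of $S$ act by isometries. This extra structure is exactly what the congruence $R$ records, and it is the reason the Ore hypothesis, which supplies $G=S^{-1}S$ and thereby \autoref{sgcong}, is indispensable here.
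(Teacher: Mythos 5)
Your proof is correct and follows essentially the same route as the paper, which derives the corollary by combining \autoref{sgcong} with \autoref{ex1} and Lemmas \ref{reprs}, \ref{lma:actsp}, \ref{lma:acta} exactly as you do: pass from the $S$-structure to a unital $S^*$-structure, pull back along the quotient $S(G)\to S(G)/R\cong S^*$, and invoke the Exel correspondence. Your explicit remarks on preservation of unitality and on the isometry condition $T_s^*T_s=1$ encoded by the congruence $R$ are consistent with the paper's \autoref{remsg}.
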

\begin{rmk}\label{remsg} The reverse statement is true under some conditions. If a *-representation $T$ of $S(G)$ on some Hilbert space $H$ factors through the quotient map $S(G)\to S(G)/R$, then clearly $T$ induces a *-representation of $S^*$, which we denote by the same symbol. It follows that any partial representation $T$ of $G$ which satisfies the property that $T_s$ are isometries for all $s\in S$, gives a *-representation of $S^*$ (and an isometric inverse representation of $S$). \end{rmk}

\begin{thm} Let $S$ be a left Ore semigroup, and let $\alpha$ be an  extendible injective action of $S$ on a C*-algebra $A$, and $\tilde{\alpha}$ the induced partial action of $G$ on $A$. Then the crossed product $A\rtimes_\alpha S$ is isomorphic to the partial crossed product $A\rtimes_{\tilde{\alpha}} G$. 
\end{thm}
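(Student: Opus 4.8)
The plan is to factor the desired isomorphism through the universal inverse semigroup $S^*$ and the Exel--Vieira inverse semigroup $S(G)$, chaining together three isomorphisms already available in the paper. Let $\beta$ denote the action of $S^*$ on $A$ induced by $\alpha$ as in \autoref{lma:acta}; since $\alpha$ is extendible, so is $\beta$. By \autoref{crossall} we immediately obtain
$$A\rtimes_\alpha S\cong A\rtimes_\beta S^*.$$
On the other side, the induced partial action $\tilde\alpha$ of $G$ produces, via \autoref{ex1}, an action $\gamma$ of $S(G)$ on $A$, and \autoref{ex2} yields
$$A\rtimes_{\tilde\alpha}G\cong A\rtimes_\gamma S(G).$$
Thus it remains only to identify $A\rtimes_\gamma S(G)$ with $A\rtimes_\beta S^*$.

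The bridge between the two inverse semigroups is \autoref{sgcong}, which gives an isomorphism $S(G)/R\cong S^*$ carrying $\tilde t_s$ to $v_s$ for $s\in S$ (taking the unit of $S$ in the identification $\tilde t_{s^{-1}p}\mapsto v_s^*v_p$). First I would check that the action $\gamma$ of $S(G)$ descends along the quotient map $q\colon S(G)\to S(G)/R\cong S^*$. The decisive point is that for every $s\in S$ the component $\gamma_{t_s}=\tilde\alpha_s=\alpha_s$ is defined on all of $A$, so the idempotent $t_s^*t_s$ acts as the identity; hence $\gamma$ respects each generating relation $t_s^*t_s\sim 1$ of the congruence $R$ and factors as a composite through $q$. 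Tracing through the identifications, and using that $\tilde\alpha$ was by construction (\autoref{par}) the partial action with $\tilde\alpha_s=\alpha_s$, one sees that the descended action of $S^*$ coincides with $\beta$.

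It then follows that the covariant representations of the two systems agree. Indeed, any covariant representation $(\pi,T)$ of $(\gamma,S(G),A)$ has $T_{t_s}$ an isometry for $s\in S$ by condition (2) of \autoref{covinv} together with $E_{t_s^*}=A$ and non-degeneracy of $\pi$, so $T$ factors through $q$ and, by \autoref{remsg}, defines a unital *-representation of $S^*$; this pair is covariant for $(\beta,S^*,A)$. Conversely a covariant representation of $(\beta,S^*,A)$ pulls back along $q$ to one of $(\gamma,S(G),A)$, and the two constructions are mutually inverse. Since both crossed products are universal for their respective, now identical, classes of covariant representations, this bijection induces an isomorphism $A\rtimes_\gamma S(G)\cong A\rtimes_\beta S^*$. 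Concatenating the three isomorphisms gives $A\rtimes_\alpha S\cong A\rtimes_{\tilde\alpha}G$.

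The main obstacle I expect is precisely the descent step: verifying rigorously that the $S(G)$-action $\gamma$ factors through the congruence $R$, and that at the level of covariant representations pulling back and pushing forward along $q$ are inverse bijections compatible with the crossed-product norms. Additional bookkeeping is needed in the non-unital case, where (as in \autoref{crossall}) the relation $t_s^*t_s=1$ must be read inside the multiplier algebra through the extendibility of $\alpha$, and the generating monomials $j_{A,x}(a)\delta_x$ of \autoref{crossinv} must be matched under $q$. Once $\gamma$ is seen to descend to $\beta$, however, the universal property of the inverse-semigroup crossed product makes the final identification formal.
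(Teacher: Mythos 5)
Your proposal is correct and follows essentially the same route as the paper's own proof: both pass through the chain $A\rtimes_\alpha S\cong A\rtimes_\beta S^*$ (via \autoref{crossall}) and $A\rtimes_{\tilde\alpha}G\cong A\rtimes_\gamma S(G)$ (via \autoref{ex1} and \autoref{ex2}), and then identify the two inverse-semigroup crossed products by showing that every covariant representation of $(\gamma,S(G),A)$ has each $T_{t_s}$ isometric, hence factors through the quotient $S(G)\to S(G)/R\cong S^*$ of \autoref{sgcong}, so that the two systems share the same covariant representations and the same supremum norms. The only cosmetic difference is that you make the descent of the action $\gamma$ to $\beta$ an explicit intermediate step, whereas the paper works directly at the level of covariant representations; both amount to the same verification.
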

\begin{proof} By Theorems \ref{thm:tcross} and \ref{crossall}, the crossed product $A\rtimes_\alpha S$ is isomorphic to the inverse semigroup crossed product $A\rtimes_\alpha S^*$.   On the other hand, by the Lemma \ref{ex1} the partial action $\tilde{\alpha}$ induces an action $\beta$ of $S(G)$ on $A$, and by Theorem \ref{ex2} the crossed products $A\rtimes_\alpha G$ and $A\rtimes_\beta S(G)$ are isomorphic. So, it remains to prove the isomorphism between $A\rtimes_\alpha S^*$ and $A\rtimes_\beta S(G)$.

Let $(\pi,T)$ be a covariant representation of $(\beta,S(G),A)$. We have $E_{t_s^*}=E_{\tilde{t}_s^*}=A$ due to the fact that $\beta$ is induced by $\alpha$. By condition (2) of Definition \ref{covinv}, for any $s\in S$ the operator $T_{t_s}^*T_{t_s}=T_{t_s^*t_s}$ is a partial isometry with initial and final spaces equal to $H$, hence a unitary; and at the same time it is a projection. It implies that $T_{t_s}$ is an isometry on $H$. By Remark \ref{remsg}, the representation $T\colon S(G)\to B(H)$ factors through the quotient map $S(G)\to S^*$ defined in Lemma \ref{sgcong} and gives a representation $\tilde{T}$ of $S^*$ on $H$. So, we obtain a covariant representation $(\pi,\tilde{T})$ of $(\alpha,S^*,A)$. 

If $(\pi,T)$ is a covariant representation of  $(\alpha,S^*,A)$, then  Corollary \ref{par} gives a covariant representation of $(\beta,S(G),A)$. Therefore these dynamical systems have the same set of covariant representations. The underlying algebra $L$ for the two crossed products are different, but the completions under the supremum norm over all covariant representations are isomorphic. Indeed, if $s,t\in S(G)$ are such that $s\sim_R t$, then for any $a\in E_s=E_t$ and for any covariant representation $(\pi,T)$ we have $$\pi\times T(a\delta_s)=\pi(a)T_s=\pi(a)T_t=\pi\times T(a\delta_t).$$
Thus, $A\rtimes_\alpha S^*$ and $A\rtimes_\beta S(G)$ are isomorphic.
\end{proof}

\end{pgr}
\section{Conclusion}


One can see that the semigroup C*-algebras (both universal and reduced) of a cancellative semigroup $S$  are in fact C*-algebras (the full C*-algebra and a C*-algebra generated by some special representation) of some inverse semigroup. All phenomena of these algebras, discussed in the Introduction, can be explained by this fact, and indicate that the concept of these algebras is imperfect. We have shown that to every left cancellative semigroup $S$ one can associate a universal inverse semigroup $S^*$. Then the full and reduced C*-algebras of $S^*$ do not have the mentioned problems and can be regarded as ``new'' C*-algebras of $S$. The universal inverse semigroup  captures many properties of $S$, of its ``old'' C*-algebras and also of actions and crossed products by $S$. All together this convinces us that $S^*$  serves the purpose of describing the C*-theory of $S$.  

\vspace{3mm}

\textbf{Acknowledgements}. The author is thankful to Prof. S.~Echterhoff for many  useful discussions and to Dr.~H.~Thiel for pointing at mistakes. The research was supported by the Alexander von Humboldt Foundation. A part of this research was supported by the Russian Foundation of Basic Research (RFBR), grant 14-01-31358.


\begin{thebibliography}{1}
 \bibitem{A1} M.~Aukhadiev, Yu. Kuznetsova, \textit{Quantum semigroups generated by locally compact semigroups}, arXiv:1504.00407v3, 2015.
\bibitem{A2}  M.~Aukhadiev, V.~Tepoyan, {\it Isometric representations of totally ordered semigroups}. Lobachevskii J. Math. 33 (2012), no. 3, 239--243. 
\bibitem{Banakh} T.~O.~Banakh, I.~Yo.~Guran, O.~V.~Gutik, \textit{Free topological inverse semigroups}, Matematychni Studii, 15 (2001), 23--43. 
\bibitem{Brown} N.~P.~Brown and N.~Ozawa, {\it C*-algebras and Finite-dimensional approximations}, Graduate Studies in Mathematics, vol. 88, American Mathematical Society, Providence, RI, 2008.

\bibitem{Che} A.~Cherubini, M.~Petrich, \textit{The inverse hull of right cancellative semigroups}, Journal of Algebra 111 (1987), 74--113.
\bibitem{Dubreil} P.~Dubreil, {\it Sur les probl$\grave{e}$mes d'immersion et la th$\acute{e}$orie des modules}, C. R. Acad. Sci. Paris 216 (1943), 625--627.
\bibitem{Duncan}  J.~Duncan, A.~L.~T.~Paterson, {\it C*-algebras of inverse semigroups}. Proc. Edinburgh Math. Soc. (2) 28 (1985), no. 1, 41–58.
\bibitem{Duncan2}  J.~Duncan, I.~Namioka, {\it Amenability of inverse semigroups and their semigroup algebras}. Proc. Roy. Soc. Edinburgh Sect. A 80 (1978), no. 3--4, 309--321.
\bibitem{Exel} R.~Exel, {\it Partial actions of groups and actions of inverse semigroups}, Proc.AMS, 126 (1998), no. 12, 3481--3494.
\bibitem{Exel2} R.~Exel, F.~Vieira, {\it Actions of inverse semigroups arising from partial actions of groups}, J. Math. Anal. Appl., 363 (2010), 86--96.

\bibitem{Grig}  S. A. Grigoryan, V. H. Tepoyan. {\it On isometric representations of the perforated semigroup}, Lobachevskii Journal of Mathematics, January 2013, Volume 34, Issue 1, pp 85--88 
\bibitem{Laca}  M.~Laca. {\it From endomorphisms to automorphisms and back: dilations and full corners}, J. London Math. Soc. (2) 61 (2000), 893--904 
\bibitem{Larsen} N.~Larsen, {\it Non-unital semigroup crossed products},  Math. Proc. R. Ir. Acad. 100A (2000), no. 2, 205--218 
\bibitem{Li} X.~Li, {\it Semigroup C*-algebras and amenability of semigroups}, J. Funct. Anal. 262 (2012), no. 10, 4302--4340.
\bibitem{Li2} X.~Li, {\it Nuclearity of cemigroup C*-algebras and the connection to amenability}, arXiv:1203.0021
\bibitem{Milan} D.~Milan, B.~Steinberg, {\it On inverse semigroup C*-algebras and crossed products}, Groups, Geometry and Dynamics, Vol.8 (2014), no. 2, 485--512.
\bibitem{Milan2} D.~Milan, {\it C*-algebras of inverse semigroups: amenability and weak containment}. J. Operator Theory, 63(2):317--332, 2010.
\bibitem{Milan3} S.~M.~Lalonde, D.~Milan, {\it Amenability and Uniqueness for Groupoids Associated with Inverse Semigroups}, arXiv:1511.01517.
\bibitem{Murphy}  Murphy, Gerard J. {\it C*-algebras generated by commuting isometries}. Rocky Mountain J. Math. 26 (1996), no. 1, 237–267.
\bibitem{Nica} Alexandru Nica. C*-algebras generated by isometries and Wiener-Hopf operators. J. Operator Theory, 27(1):17–52, 1992.
\bibitem{Norling}  Norling, Magnus Dahler, {\it Inverse semigroup C*-algebras associated with left cancellative semigroups}. Proc. Edinb. Math. Soc. (2) 57 (2014), no. 2, 533–564.
\bibitem{Ore} O.~Ore, {\it Linear equations in non-commutative fields}, Ann. Math. (2) 32 (1931), 463--477.
\bibitem{Paterson} Alan L.T. Paterson. Groupoids, Inverse Semigroups, and their Operator Algebras, Birkhauser, 1998. 
\bibitem{Paterson2}  Paterson, Alan L. T., {\it Weak containment and Clifford semigroups}. Proc. Roy. Soc. Edinburgh Sect. A 81 (1978), no. 1-2, 23–30.
\bibitem{Paterson3} Alan L.T. Paterson. {\it Amenability}, Mathematical Surveys and Monographs, vol. 29, American
Mathematical Society, Providence, RI, 1988.
\bibitem{Popov}  Popov, Alexey I.; Radjavi, Heydar {\it Semigroups of partial isometries}. Semigroup Forum 87 (2013), no. 3, 663–678
\bibitem{Preston} G.~B.~Preston, {\it Free inverse semigroups.} Collection of articles dedicated to the memory of Hanna Neumann, IV. J. Austral. Math. Soc. 16 (1973), 443-–453.
\bibitem{Schein} B.~Schein, {\it Subsemigroups of inverse semigroups}, Le Matematiche, vol. LI (1996)-Supplemento, pp. 205--227
	\bibitem{Schein2} B.~Shain, {\it Transformative semigroups of transformations} (Russian), Mat. Sb. (N.S.), 71(113):1 (1966), 65–82
\bibitem{Sieben} N.~Sieben, {\it C*-crossed products by partial actions and actions of inverse semigroups}, J. Austral. Math. Soc. (Series A) 63 (1997), 32--46
\bibitem{Vagner}  Vagner, V. V. {\it Generalized groups}. (Russian) Doklady Akad. Nauk SSSR (N.S.) 84, (1952). 1119–1122.
\bibitem{Wordin} Wordingham, J. R. {\it The left regular *-representation of an inverse semigroup.} Proc. Amer. Math. Soc. 86 (1982), no. 1, 55–58.
\end{thebibliography}
\end{document}